\newtheorem{thm}{Theorem}[section]
\newtheorem{prop}[thm]{Proposition}
\theoremstyle{definition}
\newtheorem{defn}[thm]{Definition}
\newtheorem{example}[thm]{Example}
\theoremstyle{remark}
\newtheorem{rem}[thm]{Remark}
\numberwithin{equation}{section}
\begin{document}
\title[Quasi-asymptotically almost periodic  functions and applications]{Quasi-asymptotically almost periodic functions and applications}

\author{Marko Kosti\' c}
\address{Faculty of Technical Sciences,
University of Novi Sad,
Trg D. Obradovi\' ca 6, 21125 Novi Sad, Serbia}
\email{marco.s@verat.net}

{\renewcommand{\thefootnote}{} \footnote{2010 {\it Mathematics
Subject Classification.} 47A16, 47B37, 47D06.
\\ \text{  }  \ \    {\it Key words and phrases.} Quasi-asymptotically almost periodic functions, Stepanov quasi-asymptotically almost periodic functions, convolution products, evolution systems, abstract nonautonomous differential equations of first order.
\\  \text{  }  \ \ The author is partially supported by grant 174024 of Ministry
of Science and Technological Development, Republic of Serbia.}}

\begin{abstract}
The main aim of this paper is to consider the classes of quasi-asymptotically almost periodic functions and Stepanov quasi-asymptotically almost periodic functions in Banach spaces. These classes extend the well known classes of asymptotically almost periodic functions,  Stepanov asymptotically almost periodic functions and S-asymptotically $\omega$-periodic functions with values in Banach spaces. We investigate the invariance of introduced properties under the action of finite and inifinite convolution products,  
providing also an illustrative application to abstract nonautonomous differential equations of first order.
\end{abstract}

\maketitle

\section{Introduction and preliminaries}\label{intro}

Let $1\leq p<\infty,$ let $X$ and $Y$ be two non-trivial complex Banach spaces, and let
$L(X,Y)$ denote the space consisting of all continuous linear mappings from $X$ into
$Y;$ $L(X)\equiv L(X,X).$ As explained in \cite{irkutsk},
the notion of quasi-asymptotical almost periodicity is very important in the study of qualitative properties of the infinite convolution product
\begin{align}\label{wer}
{\bf F}(t):=\int^{t}_{-\infty}R(t-s)f(s)\, ds,\quad t\in {\mathbb R},
\end{align}
where $f : {\mathbb R} \rightarrow X$ is a Weyl-$p$-almost periodic function satisfying certain extra conditions and $(R(t))_{t> 0} \subseteq L(X,Y)$ is a strongly continuous operator family having a certain growth order at zero and infinity.  

It is well known that the concept introduced by H. Weyl \cite{weyl} suggests a very general way of approaching almost periodicity.  To the best knowledge of the author, the question whether the class of asymptotically Stepanov $p$-almost periodic functions, introduced by H. R. Henr\' iquez \cite{hernan1}, is contained in the class of Weyl-$p$-almost periodic functions taken without any ergodic components, has not been examined elsewhere by now. 
In this paper, we introduce the class of Stepanov $p$-quasi-asymptotically almost periodic functions and prove later that this class contains all asymptotically Stepanov $p$-almost periodic functions and make a subclass of the class consisting of all Weyl $p$-almost periodic functions (taken in the sense of  A. S. Kovanko's approach \cite{kovanko}, which is also followed in the definition of a quasi-asymptotically almost periodic function
\cite{irkutsk}).
In such a way, we initiate the study of generalized (asymptotical) almost periodicity that intermediate Stepanov and Weyl concept.

The organization and main ideas of paper are given as follows. In Subsection \ref{subs} and Subsection \ref{ramisli}, we recall the basic definitions and results about asymptotically almost periodic type functions, asymptotically almost automorphic type functions and evolution systems, Green's functions, 
respectively (it seems that a great deal of definitions introduced in Subsetion \ref{subs} is new for functions defined on the whole real axis). In Definition \ref{prc-vag}, we recall the notion of a quasi-asymptotically almost periodic (q-aap., for short) function, defined on the interval $I,$ where $I={\mathbb R}$ or $I=[0,\infty).$ After providing some observations and illustrative examples, in Theorem \ref{firstreuslt} we prove that 
any asymptotically almost automorphic (aaa.) function which is also q-aap. needs to be asymptotically almost periodic (aap.). We present a simple example of a q-aap. function that is uniformly continuous and whose range is not relatively compact in $X$ (this is a simple modification of \cite[Example 3.1]{pierro});
we also show that there exists a q-aap. function that is uniformly continuous and not aap.. The notion of a 
Stepanov $p$-q-aap. function is introduced in Definition \ref{gorilaz}, while an analogue of Theorem \ref{secundo} for Stepanov class has been proved in Theorem \ref{secundo}. The (Stepanov) class of S-asymptotically $\omega$-periodic functions, introduced by
H. R. Henr\'iquez, M. Pierri and P. T\' aboas in \cite{pierro},
is a subclass of the class consisting of the (Stepanov) class of q-aap. functions (see Proposition \ref{tabosi}).
The main structural properties of (Stepanov) q-aap. functions are proved in Theorem \ref{krew} and Proposition \ref{raw}. 
In Example 
\ref{mangup} and Example \ref{prcko-qaz}, we verify that the class of (Stepanov) q-aap. functions is not closed under pointwise products with bounded scalar-valued (Stepanov) q-aap. functions, while in Example \ref{primerusa} we show that (Stepanov) q-aap. functions do not form vector spaces equipped with the usual operations of addition and multipliplication with scalars, unfortunately.
Subsection \ref{ne-to} is devoted to the analysis of (Stepanov)
q-aap. functions depending on two parameters and related composition principles. In Theorem \ref{prvi-comp} and Theorem \ref{drugi-comp}, we analyze the composition principles for q-aap. functions depending on two parameters following the approach presented in the monograph of T. Diagana \cite{diagana} for aap. functions. The main objective in Theorem \ref{vcb-show} and Theorem \ref{vcb-primex} is to prove corresponding results for Stepanov q-aap. functions; at these places, we follow the approach of W. Long and S.-H. Ding from \cite{comp-adv}.

Concerning applications, our main results are given in Section  \ref{nedaju}, where we analyze 
the invariance of quasi-asymptotical almost periodicity under the action of convolution products, and Section \ref{convol}, where we analyze the existence and uniqueness of q-aap. solutions of 
abstract (semilinear) nonautonomous differential equations of first order. The class of q-aap. functions is maybe unique in the existing literature with regards to its invariance under the action of infinite convolution product \eqref{wer}, for the functions defined on ${\mathbb R},$ and its invariance under the action of finite convolution product 
\begin{align}\label{espebouvi}
F(t):=\int^{t}_{0}R(t-s)f(s)\, ds,\ t\geq 0,
\end{align}
for the functions defined on $[0,\infty);$ all that we need is the uniform integrability of solution operator family $(R(t))_{t> 0},$ i.e., the condition
$\int^{\infty}_{0}\|R(s)\|_{L(X,Y)}\, ds<\infty$ (Proposition \ref{finajt}, Proposition \ref{finajtt}). Similar statements hold for 
Stepanov classes of q-aap. functions, where we use a slightly stronger condition 
$\sum^{\infty}_{k=0}\|R(\cdot)\|_{L^{q}[k,k+1]}<\infty$ ($1/p+1/q=1;$ see Proposition \ref{finajt-rosi} and Proposition \ref{finajtt-rosi}). 
It is clear that the results of this section are susceptible to applications to a wide class of inhomogenous abstract
Volterra integro-differential equations and inclusions;
basically, application is possible at any place where the variation of parameters formula or some of its generalizations
takes a role. In Section \ref{convol}, we analyze the abstract nonautonomous differential equations
\begin{align}\label{nije-da-nije}
u^{\prime}(t)=A(t)u(t)+f(t),\quad t\in {\mathbb R},
\end{align}
\begin{align}\label{srq-finite}
u^{\prime}(t) = A(t)u(t) + f(t),\quad t > 0; \ u(0) = x
\end{align}
and their semilinear analogues;
here,
the operator family $A(\cdot)$ is consisted of closed linear operators with domain and range contained in $X,$
the condition (H1) clarified below holds and the evolution system
$U(\cdot, \cdot)$ generated by $A(\cdot)$ is hyperbolic, i.e, the condition (H2) clarified below holds. In Theorem \ref{jos-fajnat} (Theorem \ref{jos}), the inhomogenity $f(\cdot)$ is Stepanov $p$-q-aap. and the associated Green's function satisfies the condition \eqref{prc-nire} (\eqref{prc}). In contrast to this, in our investigations of semilinear analogues of the abstract Cauchy problems \eqref{nije-da-nije} and \eqref{srq-finite} carried out in Subsection 
\ref{polulinearni}, we assume that the corresponding function $F(\cdot,\cdot)$ is q-aap.. This is essentially caused by the fact that the Stepanov $q$-q-aap. of function $F(\cdot,x(\cdot)),$ established in composition principles Theorem \ref{vcb-show} and Theorem \ref{vcb-primex}, holds only if we additionally assume that the range of function $x(\cdot)$ is relatively compact, which need not be true for q-aap. functions and their Stepanov generalizations. Further study of semilinear nonautonomous differential equations with forcing term $F(\cdot,\cdot)$ belonging to Stepanov class of q-aap. functions is without scope of this paper. Finally, in Example \ref{dirichlet-jazz}, we provide an instructive example of applications of our abstract theoretical results obtained,
continuing thus the analyses raised by T. Diagana in \cite[Section 4]{diagana-2008} and the author \cite[Example 3.1]{aot-besic}.

We use the standard notation throughout the paper. By $\|x\|,$ $\|y\|_{Y}$ and $\|T\|_{L(X,Y)}$ we denote the norm of an element $x\in X,$ an element $y\in Y$ and a continuous linear mapping $T\in L(X,Y).$ Let $I={\mathbb R}$ or $I=[0,\infty)$ in the sequel.
By $C_{b}(I : X),$ $C_{0}(I : X)$ and $BUC(I:X)$ we denote the vector spaces consisting of all bounded continuous functions $f  : I \rightarrow X,$ all bounded continuous functions $f  : I \rightarrow X$ such that $\lim_{|t|\rightarrow +\infty}\|f(t)\|=0$ and all bounded uniformly continuous functions $f  : I \rightarrow X,$ respectively. If $A$ is a closed linear operator
acting on $X,$
then the domain, kernel space, range and resolvent set of $A$ will be denoted by
$D(A),$ $N(A),$ $R(A)$ and $\rho(A),$
respectively; $R(\lambda :A) \equiv (\lambda-A)^{-1}$ ($\lambda \in \rho(A)$). Since no confusion
seems likely, we will identify $A$ with its graph. For any number $t\in {\mathbb R},$ we define $\lfloor t \rfloor :=\sup \{
l\in {\mathbb Z} : t \geq l \}$ and $\lceil t \rceil :=\inf \{ l\in
{\mathbb Z} : t\leq l \}.$
By $\chi_{B}(\cdot)$ we denote the characteristic function of set $B.$

\subsection{Asymptotically almost periodic type functions, asymptotically almost automorphic type functions and their generalizations}\label{subs}

Let $f : I \rightarrow X$ be continuous. Given $\epsilon>0,$ we call $\tau>0$ an $\epsilon$-period for $f(\cdot)$ iff
$
\| f(t+\tau)-f(t) \| \leq \epsilon, $ $ t\in I.
$
By $\vartheta(f,\epsilon)$ we denote the set consisted of all $\epsilon$-periods for $f(\cdot).$ It is said that $f(\cdot)$ is almost periodic (ap.) iff for each $\epsilon>0$ the set $\vartheta(f,\epsilon)$ is relatively dense in $I,$ which means that
there exists $l>0$ such that any subinterval of $I$ of length $l$ meets $\vartheta(f,\epsilon)$. The space consisted of all almost periodic functions from the interval $I$ into $X$ will be denoted by $AP(I:X).$

The class of asymptotically almost periodic functions was introduced by M. Fr\' echet in 1941, for the case that $I=[0,\infty)$ (more details about aap. functions with values in Banach spaces can be found in \cite{cheban}-\cite{diagana}, \cite{gaston} and references cited therein). If $I={\mathbb R},$ there is several non-equivalent notions of
an aap. function. Here we follow the approach of C. Zhang \cite{zhang}:

A function $f \in C_{b}(I : X)$ is said to asymptotically almost periodic iff
for every $\epsilon >0$ we can find numbers $ l > 0$ and $M >0$ such that every subinterval of $I$ of
length $l$ contains, at least, one number $\tau$ such that $\|f(t+\tau)-f(t)\| \leq \epsilon$ provided $|t|,\ |t+\tau| \geq M.$
The space consisting of all aap. functions from $I$ into $X$ is denoted by
$AAP(I : X).$ For a function $f \in C_{b}(I:X),$ the following
statements are equivalent (see \cite{RUESS} for the case that $I=[0,\infty)$ and \cite[Theorem 2.6]{zhang} for the case that $I={\mathbb R}$):
\begin{itemize}
\item[(i)] $f\in AAP(I :X).$
\item[(ii)] There exist uniquely determined functions $g \in AP(I :X)$ and $\phi \in  C_{0}(I: X)$
such that $f = g+\phi.$
\end{itemize}

Unless stated otherwise, in the sequel we will always assume that $1\leq p <\infty.$ Let $l>0,$ and let $f,\ g\in L^{p}_{loc}(I :X).$ We define the Stepanov `metric' by
\begin{align*}
D_{S_{l}}^{p}\bigl[f(\cdot),g(\cdot)\bigr]:= \sup_{x\in I}\Biggl[ \frac{1}{l}\int_{x}^{x+l}\bigl \| f(t) -g(t)\bigr\|^{p}\, dt\Biggr]^{1/p}.
\end{align*}
The Stepanov `norm' of $f(\cdot)$ is defined by
$
\| f  \|_{S_{l}^{p}}:= D_{S_{l}}^{p}[f(\cdot),0].
$
It is said that a function $f\in L^{p}_{loc}(I :X)$ is Stepanov $p$-bounded, $S^{p}$-bounded shortly, iff
$
\|f\|_{S^{p}}:=\sup_{t\in I}( \int^{t+1}_{t}\|f(s)\|^{p}\, ds)^{1/p}<\infty.
$
The space $L_{S}^{p}(I:X)$ consisted of all $S^{p}$-bounded functions becomes a Banach space equipped with the above norm.
We say that a function $f\in L_{S}^{p}(I:X)$ is Stepanov $p$-almost periodic, $S^{p}$-ap. shortly, iff the function
$
\hat{f} : I \rightarrow L^{p}([0,1] :X),
$ defined by
$
\hat{f}(t)(s):=f(t+s),\quad t\in I,\ s\in [0,1]
$
is ap..
It is said that $f\in  L_{S}^{p}(I: X)$ is asymptotically Stepanov $p$-almost periodic, $S^{p}$-aap. shortly, iff $\hat{f} : I \rightarrow L^{p}([0,1]:X)$ is aap..
By $APS^{p} (I: X)$ and $AAPS^{p} (I: X)$ we denote the spaces consisted of all $S^{p}$-ap. functions $I\mapsto X$ and $S^{p}$-aap. functions $I\mapsto X,$ respectively.

The notion of an (equi-)Weyl almost periodic function is given as follows (see \cite{deda}, \cite{nova-mono} and references cited therein for more details on the subject):

\begin{defn}\label{weyl-defn}
Let $1\leq p<\infty$ and $f\in L_{loc}^{p}(I: X).$ \index{function!equi-Weyl-$p$-almost periodic} \index{function!Weyl-$p$-almost periodic}
\begin{itemize}
\item[(i)] We say that the function $f(\cdot)$ is equi-Weyl-$p$-almost periodic, $f\in e-W_{ap}^{p}(I:X)$ for short, iff for each $\epsilon>0$ we can find two real numbers $l>0$ and $L>0$ such that any interval $I'\subseteq I$ of length $L$ contains a point $\tau \in  I'$ such that
\begin{align*}
\sup_{x\in I}\Biggl[ \frac{1}{l}\int_{x}^{x+l}\bigl \| f(t+\tau) -f(t)\bigr\|^{p}\, dt\Biggr]^{1/p} \leq \epsilon, \mbox{ i.e., } D_{S_{l}}^{p}\bigl[f(\cdot+\tau),f(\cdot)\bigr] \leq \epsilon.
\end{align*}
\item[(ii)] We say that the function $f(\cdot)$ is Weyl-$p$-almost periodic, $f\in W_{ap}^{p}(I: X)$ for short, iff for each $\epsilon>0$ we can find a real number $L>0$ such that any interval $I'\subseteq I$ of length $L$ contains a point $\tau \in  I'$ such that
\begin{align*}
\lim_{l\rightarrow \infty} \sup_{x\in I}\Biggl[ \frac{1}{l}\int_{x}^{x+l}\bigl \| f(t+\tau) -f(t)\bigr\|^{p}\, dt\Biggr]^{1/p} \leq \epsilon, \mbox{ i.e., } \lim_{l\rightarrow \infty}D_{S_{l}}^{p}\bigl[f(\cdot+\tau),f(\cdot)\bigr] \leq \epsilon.
\end{align*}
\end{itemize}
\end{defn}

We also need the definition of an asymptotically almost automorphic function defined on the interval $I.$ For beginning, let us recall that
a continuous function $f : {\mathbb R} \rightarrow X$ is said to be
almost automorphic (aa., for short) iff for every real sequence $(b_{n})$ there exist a subsequence $(a_{n})$ of $(b_{n})$ and a map $g : {\mathbb R} \rightarrow X$ such that
$
\lim_{n\rightarrow \infty}f( t+a_{n})=g(t)\ \mbox{ and } \  \lim_{n\rightarrow \infty}g\bigl( t-a_{n}\bigr)=f(t),
$
pointwise for $t\in {\mathbb R}.$ Any aa. function $f(\cdot)$ needs to be bounded and the following supremum formula holds (see e.g. \cite[Lemma 3.9.9]{nova-mono}):
$$
\|f\|_{\infty}:=\sup_{x\in {\mathbb R}}\|f(x)\|=\sup_{x\geq a}\|f(x)\|\ \ \mbox { for any number }\ \ a\in {\mathbb R}.
$$

In this paper, we will use the following notion (see also \cite[Definition 2.3]{din-man}):

\begin{defn}
\begin{itemize}
\item[(i)]
A bounded continuous function $f : {\mathbb R} \rightarrow X$ is said to be asymptotically almost automorphic iff there exist two functions $h\in AA({\mathbb R} : X)$ and $q\in C_{0}({\mathbb R} : X)$ such that $f=h+q$ on ${\mathbb R}$.
\item[(ii)] A bounded continuous function $f : [0,\infty) \rightarrow X$ is said to be asymptotically almost automorphic iff there exist two functions $h\in AA({\mathbb R} : X)$ and $q\in C_{0}([0,\infty) : X)$ such that $f=h+q$ on $[0,\infty)$.
\end{itemize}
\end{defn}

It is well known that any (asymptotically) almost periodic function defined on the interval $I$ is (asymptotically) almost automorphic as well as that the converse statement does not hold in general.
Following G. M. N'Gu\' er\' ekata and A. Pankov \cite{gaston-apankov}, a function $f\in L_{loc}^{p}({\mathbb R}:X)$ is called Stepanov $p$-almost automorphic, $S^{p}$-aa. for short, iff for
every real sequence $(a_{n}),$ there exists a subsequence $(a_{n_{k}})$ \index{ Stepanov almost automorphy}
and a function $g\in L_{loc}^{p}({\mathbb R}:X)$ such that
\begin{align*}
\lim_{k\rightarrow \infty}\int^{t+1}_{t}\Bigl \| f\bigl(a_{n_{k}}+s\bigr) -g(s)\Bigr \|^{p} \, ds =0
\mbox{ 
and }
\lim_{k\rightarrow \infty}\int^{t+1}_{t}\Bigl \| g\bigl( s-a_{n_{k}}\bigr) -f(s)\Bigr \|^{p} \, ds =0
\end{align*}
for each $ t\in {\mathbb R}.$ By $ AAS^{p}({\mathbb R} : X)$ we denote the vector space consisting of all $S^{p}$-aa. functions ${\mathbb R} \rightarrow X$.

The following definition seems to be new in case $I={\mathbb R}:$
\begin{defn}
\begin{itemize}
\item[(i)]
An $S^{p}$-bounded function $f : {\mathbb R} \rightarrow X$ is said to be asymptotically Stepanov $p$-almost automorphic, $S^{p}$-aaa. for short, iff there exist two functions $h\in AAS^{p}({\mathbb R} : X)$ and an $S^{p}$-bounded function $q : {\mathbb R} \rightarrow X$
such that
$\hat{q}\in C_{0}({\mathbb R} : L^{p}([0,1]: X))$ and $f=h+q$ a.e. on ${\mathbb R}$.
\item[(ii)] An $S^{p}$-bounded function $f : [0,\infty) \rightarrow X$ is said to be asymptotically Stepanov $p$-almost automorphic iff there exist two functions $h\in AAS^{p}({\mathbb R} : X)$ and an $S^{p}$-bounded function $q : [0,\infty) \rightarrow X$
such that
$\hat{q}\in C_{0}([0,\infty) : L^{p}([0,1]: X))$ and $f=h+q$ a.e. on $[0,\infty)$.
\end{itemize}
By $ AAAS^{p}(I : X)$ we denote the vector space consisting of all asymptotically $S^{p}$-almost automorphic functions $I \rightarrow X$.
\end{defn}

It can be easily verified that the (asymptotical) $S^{p}$-almost automorphy of $f(\cdot)$ implies the (asymptotical) almost automorphy of the mapping
$
\hat{f} : I \rightarrow L^{p}([0,1] :X)
$ defined above. Any
(asymptotically) $S^{p}$-almost periodic function $f : I \mapsto X$ has to be (asymptotically) $S^{p}$-almost automorphic, while the converse statement does not hold in general.

Denote by $C_{\omega}(I: X)$ the space consisting of all continuous $\omega$-periodic functions $g: I\rightarrow X.$
The following two definitions as well Definition \ref{dimb} seem to be new in case $I={\mathbb R},$ likewise (see H. R. Henr\'iquez et al. \cite{pierro},  H. R. Henr\'iquez \cite{hrh}, W. Dimbour, S. M. Manou-Abi \cite{dimbour} for case $I=[0,\infty)$):

\begin{defn}\label{henriqz}
Let $\omega \in I.$ Then we say that a bounded continuous function $f : I \rightarrow X$ is S-asymptotically $\omega$-periodic iff $\lim_{|t|\rightarrow \infty}\|f(t+\omega)-f(t)\|=0.$ Denote by $SAP_{\omega}(I: X)$ the space consisting of all such functions. 
\end{defn}

\begin{defn}\label{cime}
Let $\omega \in I.$ 
A bounded continuous function $f : I \rightarrow X$ is said to be asymptotically $\omega$-almost periodic iff there exists a function $g\in C_{\omega}(I: X)$ and a function $q\in C_{0}(I: X)$ such that $f(t)=g(t)+q(t)$ for all $t\in I.$ Denote by $AP_{\omega}(I: X)$ the vector space consisting of all such functions.
\end{defn}

It is straightforward to see that $AP_{\omega}(I: X)\subseteq SAP_{\omega}(I: X)$ and the inclusion is strict.
We will also work with the class of Stepanov $S^{p}$-asymptotically $\omega$-periodic functions. 

\begin{defn}\label{dimb} 
Let $\omega \in I.$ A Stepanov $p$-bounded function $f(\cdot)$ is said to be Stepanov $p$-asymptotically $\omega$-periodic iff 
$$
\lim_{|t|\rightarrow \infty}\int^{t+1}_{t}\bigl\|f(s+\omega)-f(s)\bigr\|^{p}\, ds=0.
$$ 
Denote by $S^{p}SAP_{\omega}(I: X)$ the space consisting of all such functions.
\end{defn}

We have that $SAP_{\omega}(I: X)\subseteq S^{p}SAP_{\omega}(I: X)$ and the inclusion is strict.

\subsection{Evolution systems and Green's functions}\label{ramisli}

The following definition is well known in the existing literature:

\begin{defn}\label{paz}
A family $\{U(t, s) : t \geq s,\  t,\  s  \in {\mathbb R}\} \subseteq L(X)$ is said to be
an evolution system iff the following holds:
\begin{itemize}
\item[(a)] $U(s, s) = I,$ $U(t, s) = U(t, r)U(r, s)$ for $t \geq  r \geq  s$ and $t,\ r,\ s  \in {\mathbb R},$
\item[(b)] $\{(\tau, s) \in {\mathbb R}^{2} : \tau > s \} \ni (t, s) \mapsto U(t, s)x$ is continuous for any fixed element $x\in X$.
\end{itemize}
\end{defn}

In the sequel, it will be always assumed that the family $A(\cdot)$
satisfies the following condition introduced by  P. Acquistapace and B. Terreni in \cite{a-t} (with $\omega=0$):
\begin{itemize}
\item[(H1):] There is a number $\omega \geq 0$ such that the family of closed linear operators $A(t),$ $t\in {\mathbb R}$ on $X$ satisfies $\overline{\Sigma_{\phi}} \subseteq \rho(A(t)-\omega),$
$$
\bigl\| R(\lambda : A(t)-\omega) \bigr\| =O\Bigl( \bigl(1+|\lambda|\bigr)^{-1} \Bigr),\quad t\in {\mathbb R},\ \lambda \in \overline{\Sigma_{\phi}},\mbox{ and }
$$
$$
\Bigl\| (A(t)-\omega) R(\lambda : A(t)-\omega) \bigl[ R(\omega : A(t))- R(\omega : A(s)) \bigr] \Bigr\| =O\Bigl( |t-s|^{\mu}|\lambda|^{-\nu} \Bigr),
$$
for any $t,\ s\in {\mathbb R},\ \lambda \in \overline{\Sigma_{\phi}},$
where $\phi \in (\pi/2,\pi),$ $0<\mu,\ \nu \leq 1$ and $\mu+\nu>1.$
\end{itemize}

Then we know that there exists an evolution system
$U(\cdot, \cdot)$ generated by $A(\cdot),$ satisfying that $\|U(t,s)\|=O(1)$ for $t\geq s,$ as well as a great deal of other conditions (\cite{a-t}).
Besides (H1), we will also assume that the following condition holds:
\begin{itemize}
\item[(H2):]
The evolution system $U(\cdot, \cdot)$ generated by $A(\cdot)$ is hyperbolic (or, equivalently, has exponential dichotomy), i.e.,
there exist a family of projections $(P(t))_{t \in {\mathbb R}}\subseteq L(X),$ being uniformly bounded and strongly continuous
in $t,$ and constants $M',\ \omega > 0$ such that (a)-(c) holds with $Q := I - P$ and $Q(\cdot):=I-P(\cdot),$ where $I$ stands for the identity operator on $X$ and:
\begin{itemize}
\item[(a)] $U(t, s)P(s) = P(t)U(t, s)$ for all $t \geq s,$
\item[(b)] the restriction $U_{Q}(t, s) : Q(s)X \rightarrow Q(t)X$ is invertible for all $t \geq s$ (here we define
$U_{Q}(s, t) = U_{Q}(t, s)^{-1}$),
\item[(c)] $\|U(t, s)P(s)\| \leq  M'e^{-\omega (t-s)}$ and $\|U_{Q}(s, t)Q(t)\|\leq M'e^{-\omega (t-s)}$ for all $t \geq s.$
\end{itemize}
\end{itemize}

It is said that $U(\cdot,\cdot)$ is
exponentially stable iff the choice $P(t) = I$ for all $t \in {\mathbb R}$ can be made; $U(\cdot,\cdot)$ is said to be (bounded) exponentially bounded iff there exist
two finite real constants $M > 0$ and ($\omega=0$) $\omega \in {\mathbb R}$
such that $\|U(t, s)P(s)\| \leq  Me^{-\omega (t-s)}$ for all $t \geq s.$
The associated Green's function $\Gamma(\cdot,\cdot)$ is defined through
\[
\Gamma(t,s):=\left\{
\begin{array}{l}
U(t,s)P(s),\  t\geq s,\ t,\ s\in {\mathbb R},\\
-U_{Q}(t,s)Q(s),\ t< s,\ t,\ s\in {\mathbb R}.
\end{array}
\right.
\]
Let $M'$ be the constant appearing in (H2). Then
\begin{align}\label{srq}
\|\Gamma(t,s)\| \leq M'e^{-\omega |t-s| },\quad t,\ s\in {\mathbb R}
\end{align}
and the function
\begin{align}\label{do-koske}
u(t):=\int^{+\infty}_{-\infty}\Gamma(t,s)f(s)\, ds,\quad t\in {\mathbb R}
\end{align}
is a unique bounded continuous function on ${\mathbb R}$ satisfying
$$
u(t)=U(t, s)u(s) +
\int^{t}_{s}U(t, \tau )f(\tau )\, d\tau, \quad  t \geq s;
$$
cf. \cite{schnaubelt}. In the sequel, it will be said that $u(\cdot)$ is a
mild solution of the abstract Cauchy problem \eqref{nije-da-nije}.

Let $f : [0,\infty)\rightarrow X$ be continuous. By a mild solution of the abstract Cauchy problem \eqref{srq-finite}
we mean the function
\begin{align}\label{ndn-finite}
u(t):=U(t,0)x+\int^{t}_{0}U(t,s)f(s)\, ds,\quad t\geq 0.
\end{align}
For more details on the subject, we refer the reader to \cite[Section 5]{schnaubelt}.

We will also consider the following semilinear Cauchy problems:
\begin{align}\label{petar-ghj}
u^{\prime}(t)=A(t)u(t)+F(t,u(t)),\quad t\in {\mathbb R}
\end{align}
and
\begin{align}\label{petar-ghj-finite}
u^{\prime}(t)=A(t)u(t)+F(t,u(t)),\quad t>0; \ u(0) = x.
\end{align}

Let the space $S^{p}Q-AAP(I \times X : X)$ be defined as in Subsection \ref{ne-to}, and let $F \in S^{p}Q-AAP(I \times X : X)  .$ 

\begin{defn}\label{leton}
\begin{itemize}
\item[(i)] A function $u \in C_{b}({\mathbb R}:X)$ is said to be a mild solution of \eqref{petar-ghj} iff
\begin{align*}
u(t)=\int^{+\infty}_{-\infty}\Gamma(t,s)F(s,u(s))\, ds,\quad t\in {\mathbb R}.
\end{align*}
\item[(ii)] A function $u \in C_{b}([0,\infty ):X)$ is said to be a mild solution of \eqref{petar-ghj-finite} iff
\begin{align*}
u(t)=U(t,0)x+\int^{t}_{0}U(t,s)F(s,u(s))\, ds,\quad t\geq 0.
\end{align*}
\end{itemize}
\end{defn}

\section{Quasi-asymptotically almost periodic functions and their generalizations}\label{profica}

We start by recalling the following definition (\cite{irkutsk}):

\begin{defn}\label{prc-vag}
Suppose that $I=[0,\infty)$ or $I={\mathbb R}.$ Then we say that a bounded continuous function $f : I\rightarrow X$ is quasi-asymptotically almost periodic iff 
for each $\epsilon>0$ there exists a finite number $L(\epsilon)>0$ such that any interval $I'\subseteq I$ of length $L(\epsilon)$ contains at least one number $\tau \in I'$ satisfying that
there exists a finite number $M(\epsilon,\tau)>0$ such that
\begin{align}\label{dub garden}
\| f(t+\tau)-f(t)\|\leq \epsilon,\mbox{ provided }t\in I\mbox{ and } |t|\geq M(\epsilon,\tau).
\end{align}
Denote by $Q-AAP(I : X)$ the set consisting of all quasi-asymptotically almost periodic functions from $I$ into $X.$
\end{defn}

In order to avoid unnecessary repeating, we will use the shorthand
\begin{itemize}
\item[(S):]``there exists a finite number $L(\epsilon)>0$ such that any interval $I'\subseteq I$ of length $L(\epsilon)$ contains at least one number $\tau \in I'$ satisfying that
there exists a finite number''.
\end{itemize}

\begin{rem}\label{dubgarden}
It is not relevant whether we will write \eqref{dub garden} or
\begin{align*}
\| f(t+\tau)-f(t)\|\leq \epsilon,\mbox{ provided }t\in I,\ |t|\geq M(\epsilon,\tau) \mbox{ and } |t+\tau|\geq M(\epsilon,\tau).
\end{align*}
Using this observation, it can be easily seen that
the
class of aap. functions is contained in the class of q-aap. functions (the number $M$ depends only on $\epsilon$ and not on $\tau$ for aap. functions).  The converse statement is not true, however:
Let $I={\mathbb R}$ and let $f(\cdot)$ be any bounded scalar-valued continuous function such that $f(t)=1$ for all $t\geq 0$ and $f(t)=0$ for all $t\leq -1.$ Then $f (\cdot)$ is q-aap., not aap. and not equi-Weyl-$p$-ap. for any $p\in [1,\infty)$ (\cite{deda}, \cite{irkutsk}). Applying Theorem \ref{firstreuslt}(i) below we can see that $f(\cdot)$ is not aaa., as well.
\end{rem}

We  continue by providing an illustrative example and one more remark.

\begin{example}\label{dubgarden121}
Suppose that $f\in C^{1}(I: X) \cap C_{b}(I:X)$ and $f^{\prime}\in C_{0}(I:X).$ Then $f\in  Q-AAP(I : X).$ In order to see this, it suffices to apply the Langrange mean value theorem as well as to choose, in Definition \ref{prc-vag}, $L(\epsilon)>0$ arbitrarily and any $\tau \neq 0$ from an arbitrary interval $I'$ of length $L(\epsilon).$ Then, for this $\epsilon>0$ and $\tau \in I',$ there exists a sufficiently large $M(\epsilon,\tau)>0$ such that $[t,t+\tau]\subseteq \{s \in I: |s|\geq M_{0}(\epsilon,\tau)\}$ for $|t|\geq M(\epsilon,\tau),$ where $M_{0}(\epsilon,\tau)$ is already chosen so that $\|f^{\prime}(\xi)\|\leq \epsilon/|\tau|$ for $|\xi|\geq M_{0}(\epsilon,\tau);$ then we have
$$
\|f(t+\tau)-f(t)\|\leq |\tau| \sup_{\xi \in [t,t+\tau]}\|f^{\prime}(\xi)\|\leq \epsilon,\ |t|\geq M(\epsilon,\tau).
$$
It is worth noting that there exists a function $f(\cdot)$ that is not aap. and satisfies the above properties; a typical example is given by $f(t):=\sin (\ln (1+t)),$ $t\geq 0$ (see also \cite[Example 4.1, Theorem 4.2]{rozpr}).
\end{example}

\begin{rem}\label{anti-periodic}
In our joint research paper \cite{krag} with D. Velinov, we have recently introduced and analyzed the class of (asymptotically) almost anti-periodic functions. The notion of
quasi-asymptotically almost anti-periodicity (q-aanp., for short) can be introduced in the following way: A bounded continuous
$f : I\rightarrow X$ is called quasi-asymptotically almost anti-periodic iff for each $\epsilon>0$ (S) holds with a number $M(\epsilon,\tau)>0$ such that
\begin{align}\label{r-basara}
\| f(t+\tau)+f(t)\|\leq \epsilon,\mbox{ provided }t\in I\mbox{ and } |t|\geq M(\epsilon,\tau).
\end{align}
Suppose that $t,\ t+\tau \in I$ as well as that $|t|\geq M(\epsilon, \tau)+|\tau|.$ Then $|t+\tau|\geq M(\epsilon, \tau)$ and applying \eqref{r-basara} twice, we get that
\begin{align*}
\| f(t+2\tau) & -f(t)\| =\bigl\| [f(t+2\tau)+f(t+\tau)]-[f(t+\tau)+f(t)]\bigr\|
\\ & \leq \| f(t+2\tau) +f(t+\tau)\| +\|f(t+\tau) +f(t)\|\leq 2 \epsilon .
\end{align*}
Hence, any q-aanp. function is automatically q-aap.. Further analysis of q-aanp. functions and their Stepanov generalizations are without scope of this paper.
\end{rem}

The space $C_{b}(I: X) \setminus Q-AAP(I : X)$
is sufficiently large; it is clearly non-empty because
it is very plainly to construct an example of an infinite-differentiable bounded function $f : I \rightarrow {\mathbb C}$ such that for each number $\tau \in I$ there exists a sequence $(t_{n})_{n\in {\mathbb N}}$ in $I$ with the properties that $\lim_{n\rightarrow \infty}|t_{n}|=\infty$ and $|f(t_{n}+\tau)-f(t_{n})|\geq 1$ for all $n\in {\mathbb N}.$ Furthermore, we have the following:

\begin{thm}\label{firstreuslt}
\begin{itemize}
\item[(i)] $AAA(I : X) \cap Q-AAP(I : X)=AAP(I: X)$ and $[AAA(I : X) \setminus AAP(I : X)] \cap Q-AAP(I:X)=\emptyset.$
\item[(ii)] $AA({\mathbb R}: X) \cap Q-AAP({\mathbb R}: X)=AP({\mathbb R}: X).$
\end{itemize}
\end{thm}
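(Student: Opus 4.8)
The plan is to reduce both assertions to one mechanism: the supremum formula for almost automorphic functions lets one promote a translation number that is only ``good for large $|t|$'' (all that quasi-asymptotic almost periodicity supplies) into a genuine global $\epsilon$-period of the almost automorphic part. First I would record the trivial inclusions so that only one direction survives in each case. Since every aap.\ function is aaa.\ and, by Remark \ref{dubgarden}, also q-aap., and since every ap.\ function is both aa.\ and aap., we have $AAP(I:X)\subseteq AAA(I:X)\cap Q-AAP(I:X)$ and $AP(\mathbb R:X)\subseteq AA(\mathbb R:X)\cap Q-AAP(\mathbb R:X)$. Moreover both equalities in (i) amount to the single inclusion $AAA(I:X)\cap Q-AAP(I:X)\subseteq AAP(I:X)$ (the second statement being just its contrapositive), so it is enough to prove the nontrivial inclusions in (i) and (ii).

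For (ii), take $f\in AA(\mathbb R:X)\cap Q-AAP(\mathbb R:X)$ and fix $\epsilon>0$. Quasi-asymptotic almost periodicity yields $L(\epsilon)>0$ such that every interval of length $L(\epsilon)$ contains a $\tau$ with $\|f(t+\tau)-f(t)\|\le\epsilon$ for $|t|\ge M(\epsilon,\tau)$. The key point is that $g_{\tau}:=f(\cdot+\tau)-f(\cdot)$ is again almost automorphic: translates of aa.\ functions are aa., and $AA(\mathbb R:X)$ is closed under subtraction by a diagonal extraction of subsequences. Applying the supremum formula recalled above to $g_{\tau}$ with $a=M(\epsilon,\tau)$ gives $\|g_{\tau}\|_{\infty}=\sup_{x\ge M(\epsilon,\tau)}\|g_{\tau}(x)\|\le\epsilon$, i.e.\ $\tau\in\vartheta(f,\epsilon)$. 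As such $\tau$ occur in every interval of length $L(\epsilon)$, the set $\vartheta(f,\epsilon)$ is relatively dense, so $f\in AP(\mathbb R:X)$.

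For (i) I would run the same argument on the almost automorphic summand. Write $f=h+q$ with $h\in AA(\mathbb R:X)$ and $q\in C_{0}(I:X)$. Given $\epsilon>0$ and a q-aap.\ translation number $\tau$ (good for $|t|\ge M(\epsilon,\tau)$), the decomposition gives $\|h(t+\tau)-h(t)\|\le\|f(t+\tau)-f(t)\|+\|q(t+\tau)\|+\|q(t)\|$; since $q\in C_{0}(I:X)$ one enlarges $M(\epsilon,\tau)$ to some $M'$ so that the two $q$-terms together are $\le\epsilon$, whence $\|h(t+\tau)-h(t)\|\le 2\epsilon$ for $|t|\ge M'$. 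Because $h(\cdot+\tau)-h(\cdot)$ is almost automorphic on all of $\mathbb R$, the supremum formula again upgrades this to $\|h(\cdot+\tau)-h(\cdot)\|_{\infty}\le 2\epsilon$, so $\tau\in\vartheta(h,2\epsilon)$. The step I expect to need the most care is concluding that $h$ is almost periodic on the whole line: when $I=[0,\infty)$ the q-aap.\ translation numbers only populate $[0,\infty)$, so I would use the symmetry of period sets (if $\tau\in\vartheta(h,2\epsilon)$ then $-\tau\in\vartheta(h,2\epsilon)$, valid since $h$ is defined on all of $\mathbb R$, together with $0\in\vartheta(h,2\epsilon)$) to pass from relative density in $[0,\infty)$ to relative density in $\mathbb R$. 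This yields $h\in AP(\mathbb R:X)$, and then $f=h|_{I}+q$ exhibits $f$ as a sum of an ap.\ and a $C_{0}$ function, so $f\in AAP(I:X)$ by the equivalence (i)$\Leftrightarrow$(ii) recorded above for $AAP(I:X)$.
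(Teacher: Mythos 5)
Your proof is correct and follows essentially the same route as the paper: decompose $f=h+q$, absorb the $C_{0}$ part by enlarging $M(\epsilon,\tau)$, and use translation invariance of $AA(\mathbb R:X)$ together with the supremum formula to upgrade the bound on $h(\cdot+\tau)-h(\cdot)$ for large $|t|$ to a global bound, making $\tau$ a genuine $2\epsilon$-period of $h$. The only addition is that you explicitly treat the case $I=[0,\infty)$ (via symmetry of the period set of $h$), which the paper omits ``for brevity''; that detail is handled correctly.
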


\begin{proof}
For the sake of brevity, we will consider only the case that $I={\mathbb R}.$ It is clear that
$AAP({\mathbb R}: X) \subseteq AAA({\mathbb R}: X) \cap Q-AAP({\mathbb R}: X).$ To prove the converse inclusion,
suppose that $f\in AAA({\mathbb R}: X) \cap Q-AAP({\mathbb R}: X).$ Then there exist two functions $h\in AA({\mathbb R} : X)$ and $q\in C_{0}({\mathbb R} :  X)$ such that $f=h+q$ on ${\mathbb R}$ and for each $\epsilon>0$ (S) holds with a number $M(\epsilon,\tau)>0$ such that
\begin{align}\label{dub gardenpos}
\| [h(t+\tau)-h(t)]+[q(t+\tau)-q(t)]\|\leq \epsilon,\mbox{ provided }t\in {\mathbb R}\mbox{ and } |t|\geq M(\epsilon,\tau).
\end{align}
Fix a number $\epsilon>0$ and suppose that the real number $\tau$ satisfies \eqref{dub gardenpos} for $|t|\geq M(\epsilon,\tau).$ Since $q\in C_{0}({\mathbb R} :  X),$ we have that there exists a finite number
$M_{1}(\epsilon, \tau)\geq  M(\epsilon,\tau)$ such that
\begin{align}\label{dub gardenposh}
\| h(t+\tau)-h(t)\|\leq 2\epsilon,\mbox{ provided }t\in {\mathbb R}\mbox{ and } |t|\geq M_{1}(\epsilon,\tau).
\end{align}
Define the function $H : {\mathbb R} \rightarrow X$ by $H(t):=h(t+\tau)-h(t),$ $t\in {\mathbb R}.$ Since the space $AA({\mathbb R}: X)$ is translation invariant, we have $H\in AA({\mathbb R}: X).$ Applying supremum formula and \eqref{dub gardenposh}, we get
$$
\sup_{t\in {\mathbb R}}\|H(t)\| =\sup_{t\geq M_{1}(\epsilon, \tau)}\|H(t)\|=\sup_{t\geq M_{1}(\epsilon, \tau)}\|h(t+\tau)-h(t)\|\leq 2\epsilon.
$$
Hence, $\|h(t+\tau)-h(t)\|\leq 2\epsilon$ for all $t\in {\mathbb R}$ and $h(\cdot)$ is ap. by definition. Hence, $AAP({\mathbb R}: X) = AAA({\mathbb R}: X) \cap Q-AAP({\mathbb R}: X),$ which immediately implies the second equality in (i). The proof of (ii) follows from the above arguments, as well.
\end{proof}

It is expected that the range of a function $f\in Q-AAA(I: X) \cap BUC(I : X)$ need not be relatively compact, as in the case of aap. functions. In the following example, we will explain this fact in case $I=[0,\infty):$

\begin{example}\label{gent}
Let $X:=c_{0}({\mathbb C}).$ Although the final conclusions presented here holds for the function $f(\cdot)$ considered in \cite[Example 3.1]{pierro}, we will prove that the range of function
$$
f(t):=\Biggl(\frac{4n^{2}t^{2}}{(t^{2}+n^{2})^{2}} \Biggr)_{n\in {\mathbb N}},\ t\geq 0
$$
is not relatively compact in $X.$ Using a similar analysis as in the afore-mentioned example, we get that the function $f(\cdot)$ is bounded and uniformly continuous with the estimate 
$
\|f(t+s)-f(t)\|\leq 8s,$ $t,\ s>0$
holding true. Further on, for each $t>0$ and $\tau \geq 0,$ we have:
\begin{align*}
\|f(t+\tau)-f(t)\|
 \leq  & \sup_{n\in {\mathbb N}} \frac{4n^{2}\bigl[ (t+\tau)^{2}+\tau^{2}\bigr]}{(t^{2}+n^{2})^{2}((t+\tau)^{2}+n^{2})^{2}}\leq t^{-4}+4\frac{\tau^{2}}{t^{2}},\ t>0,\ \tau\geq 0.
\end{align*}
By \cite[Remark 3.1, Proposition 3.3]{pierro}, it readily follows that the range of $f(\cdot)$ is not 
relatively compact as well as that there is no number $\tau>0$ such that $f(\cdot)$ is $\tau$-normal on compact subsets; see \cite[Definition 3.2]{pierro} for the notion.
\end{example}

Now we will prove that the existence of a number $\omega \in I$ such that $f : I \rightarrow X$ is S-asymptotically $\omega$-periodic implies that $f(\cdot)$ is qaap.:

\begin{prop}\label{tabosi}
We have $SAP_{\omega}(I: X)\subseteq Q-AAP(I:X).$
\end{prop}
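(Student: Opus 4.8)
The plan is to use the integer multiples $\{n\omega : n\in{\mathbb Z}\}$ as the candidate $\epsilon$-periods required by Definition \ref{prc-vag}, and to estimate $\|f(t+n\omega)-f(t)\|$ by telescoping it into $|n|$ consecutive one-step differences of the form $\|f(s+\omega)-f(s)\|$, each of which is controlled by the defining decay hypothesis of $SAP_{\omega}(I:X)$. First note that any $f\in SAP_{\omega}(I:X)$ is bounded and continuous by definition, so only the period condition has to be checked; I may also assume $\omega>0$ (for $I=[0,\infty)$ the value $\omega=0$ is degenerate and excluded, while $\omega<0$ reduces to $|\omega|$ by a shift of variable). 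Fix $\epsilon>0$. I would take $L(\epsilon):=2\omega$, independent of $\epsilon$, since every interval $I'\subseteq I$ of length $2\omega$ (being of length $>\omega$) necessarily contains a point $\tau=n\omega$ with $n\in{\mathbb Z}$, and with $n\geq 0$ when $I=[0,\infty)$.

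For such a $\tau$ with $n\neq 0$, the telescoping identity gives
$$
\bigl\| f(t+n\omega)-f(t)\bigr\| \leq \sum_{j=1}^{|n|}\bigl\| f(s_{j}+\omega)-f(s_{j})\bigr\|,
$$
a sum of exactly $|n|$ terms whose base points $s_{j}$ (namely $t+(j-1)\omega$ if $n>0$ and $t-j\omega$ if $n<0$) all satisfy $|s_{j}-t|\leq |n|\omega$ and lie in $I$. Invoking $\lim_{|s|\to\infty}\|f(s+\omega)-f(s)\|=0$, I would pick a threshold so that $\|f(s+\omega)-f(s)\|\leq \epsilon/|n|$ whenever $s\in I$ and $|s|\geq M_{0}$, and then set $M(\epsilon,\tau):=M_{0}+|n|\omega$. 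For every $t\in I$ with $|t|\geq M(\epsilon,\tau)$ we then get $|s_{j}|\geq |t|-|n|\omega\geq M_{0}$ for each $j$, so each summand is at most $\epsilon/|n|$ and hence $\|f(t+\tau)-f(t)\|\leq \epsilon$, which is precisely \eqref{dub garden}; the case $n=0$, i.e. $\tau=0$, is immediate.

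The one delicate point, and the reason the inclusion is not wholly trivial, arises only when $I={\mathbb R}$: the chain of base points connecting $f(t)$ to $f(t+n\omega)$ may sweep across a neighborhood of the origin, where the decay estimate for $\|f(s+\omega)-f(s)\|$ is not available. This occurs exactly when $t$ is large and negative while $\tau=n\omega$ is large and positive (or conversely). The remedy is the buffer $|n|\omega$ inserted into $M(\epsilon,\tau)$, which forces $|t|$ to exceed $M_{0}$ by at least the full span of the chain and thereby keeps every $s_{j}$ in the region $|s_{j}|\geq M_{0}$. This step is legitimate precisely because Definition \ref{prc-vag} allows $M$ to depend on $\tau$ as well as on $\epsilon$; indeed, this very dependence is what distinguishes the quasi-asymptotic notion from ordinary asymptotic almost periodicity, so it is natural that the argument should rest on it.
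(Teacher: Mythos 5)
Your proof is correct and follows essentially the same route as the paper's: take $L(\epsilon)=2\omega$ so that every interval of that length contains some $\tau=n\omega$, telescope $f(t+n\omega)-f(t)$ into $|n|$ one-step differences each controlled by the $SAP_{\omega}$ decay below the level $\epsilon/|n|$, and absorb the sweep of the chain of base points by adding the buffer $|n|\omega$ to the threshold $M(\epsilon,\tau)$. Your explicit treatment of negative $n$ when $I={\mathbb R}$ is a small point the paper's proof leaves implicit, but the argument is the same.
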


\begin{proof}
For $\epsilon>0$ given in advance, we can take $L(\epsilon)=2\omega.$ Then any interval $I'\subseteq I$ of length $L(\epsilon)$ contains a number $\tau=n\omega$ for some $n\in {\mathbb N}_{0}.$ For this $n$ and $\epsilon,$ there exists a finite number $M(\epsilon,n)>0$
such that $\|f(t+\omega)-f(t)\|\leq \epsilon/n\omega$ for $|t|\geq M(\epsilon,n).$ Then the final conclusion
follows from the estimates
\begin{align*}
\|f(t+n\omega)-f(t)\|\leq \sum_{k=0}^{n-1}\|f(t+\tau-k\omega)-f(t+\omega-(k+1)\omega)\|\leq n\epsilon/n\omega=\epsilon,
\end{align*}
provided $|t|\geq M(\epsilon,n)+n\omega.$
\end{proof}

In \cite[Example 17]{xie}, R. Xie and C. Zhang
have constructed an example of a function $f\in SAP_{2}([0,\infty) : X)$ that is not uniformly continuous. By the above proposition, the function $f(\cdot)$ is q-aap. and not uniformly continuous.

The following simple proposition, already known in the case that $I=[0,\infty),$ can be deduced by using the arguments contained in the proof 
of \cite[Proposition 3.6]{prc-marek}. An alternative proof can be given by using Theorem \ref{firstreuslt}, Proposition \ref{tabosi}
and an easy reformulation of \cite[Lemma 3.1]{pierro} in case $I={\mathbb R}$:

\begin{prop}\label{nula-sam}
Let $\omega \in I.$
\begin{itemize}
\item[(i)] Suppose that $f\in SAP_{\omega}(I: X) \cap AAA(I : X).$ Then $f\in AP_{\omega}(I:X).$
\item[(ii)] Suppose that $f\in SAP_{\omega}(I: X) \cap AA(I : X).$ Then $f\in C_{\omega}(I:X).$
\end{itemize}
\end{prop}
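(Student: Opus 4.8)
The plan is to prove both parts simultaneously by combining the already-established results. The key observation is that Proposition \ref{tabosi} gives the inclusion $SAP_{\omega}(I:X)\subseteq Q\text{-}AAP(I:X)$, so under the hypotheses of either part, the function $f(\cdot)$ automatically lies in $Q\text{-}AAP(I:X)$. This is precisely the extra ingredient needed to invoke Theorem \ref{firstreuslt}.

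For part (ii), I would argue as follows. Assume $f\in SAP_{\omega}(I:X)\cap AA(I:X)$. By Proposition \ref{tabosi} we have $f\in Q\text{-}AAP(I:X)$, so $f\in AA(I:X)\cap Q\text{-}AAP(I:X)$. Applying Theorem \ref{firstreuslt}(ii) (in the case $I=\mathbb{R}$; the case $I=[0,\infty)$ is analogous, reducing to the whole-line statement via the decomposition results), this yields $f\in AP(I:X)$. It then remains to upgrade ``almost periodic'' to ``genuinely $\omega$-periodic.'' Here I would use the $S$-asymptotic $\omega$-periodicity together with almost periodicity: since $f$ is almost periodic, the supremum formula for aa. functions (stated in the excerpt) gives $\sup_{t\in I}\|f(t+\omega)-f(t)\| = \sup_{|t|\geq a}\|f(t+\omega)-f(t)\|$ for every $a$, and letting $a\to\infty$ the right-hand side tends to $0$ by Definition \ref{henriqz}. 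Hence $f(t+\omega)=f(t)$ for all $t$, i.e. $f\in C_{\omega}(I:X)$.

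For part (i), the argument is parallel but one level up in the asymptotic hierarchy. Assume $f\in SAP_{\omega}(I:X)\cap AAA(I:X)$. Again Proposition \ref{tabosi} places $f$ in $Q\text{-}AAP(I:X)$, so $f\in AAA(I:X)\cap Q\text{-}AAP(I:X)$, and Theorem \ref{firstreuslt}(i) gives $f\in AAP(I:X)$. By the decomposition characterization of $AAP$ recalled in Subsection \ref{subs}, write $f=g+\phi$ with $g\in AP(I:X)$ and $\phi\in C_{0}(I:X)$. The $S$-asymptotic $\omega$-periodicity of $f$ combined with $\phi\in C_{0}(I:X)$ forces $g$ to be $S$-asymptotically $\omega$-periodic as well, since $g(t+\omega)-g(t) = [f(t+\omega)-f(t)] - [\phi(t+\omega)-\phi(t)]$ and both bracketed terms vanish as $|t|\to\infty$. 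Then $g\in AP(I:X)\cap SAP_{\omega}(I:X)$, and since $AP\subseteq AA$, part (ii) (or the supremum-formula argument directly) shows $g\in C_{\omega}(I:X)$. Therefore $f=g+\phi$ with $g\in C_{\omega}(I:X)$ and $\phi\in C_{0}(I:X)$, which is exactly the definition of $f\in AP_{\omega}(I:X)$.

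The main obstacle I anticipate is the passage from almost periodicity to strict $\omega$-periodicity in part (ii), and correspondingly the extraction of the periodic principal part in part (i). The subtlety is that $S$-asymptotic $\omega$-periodicity is only an asymptotic condition, so one must exploit the rigidity of almost periodic (or almost automorphic) functions — namely the supremum formula, which converts an asymptotic bound into a global one — to conclude exact periodicity. In the decomposition step of part (i), care is needed to confirm that the difference $f(t+\omega)-f(t)$ splits cleanly and that the $C_0$-component $\phi$ contributes nothing in the limit; this is routine but is where the bookkeeping must be done correctly. The alternative route mentioned in the excerpt, via a reformulation of \cite[Lemma 3.1]{pierro}, would bypass some of this, but the self-contained argument through Theorem \ref{firstreuslt} and the supremum formula is the cleaner path.
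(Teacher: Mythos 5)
Your argument is correct and is precisely the ``alternative proof'' the paper points to: Proposition \ref{tabosi} places $f$ in $Q-AAP(I:X)$, Theorem \ref{firstreuslt} then upgrades the hypothesis to (asymptotic) almost periodicity, and the remaining step --- that an almost automorphic, S-asymptotically $\omega$-periodic function must be genuinely $\omega$-periodic --- is exactly the reformulation of \cite[Lemma 3.1]{pierro} the paper invokes, which you instead prove directly via the supremum formula (the same device used in the paper's own proof of Theorem \ref{firstreuslt}). The decomposition bookkeeping in part (i) is handled correctly, so there is no gap.
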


Now we will introduce the Stepanov generalization of q-aap. functions:

\begin{defn}\label{gorilaz}
Let $f\in L_{S}^{p}(I:X).$ Then it is said $f(\cdot)$ is Stepanov $p$-quasi-asymptotically almost periodic ($S^{p}$-qaap., for short) iff for each $\epsilon>0$ (S) holds with a number $M(\epsilon,\tau)>0$ such that
\begin{align}\label{primer}
\int^{t+1}_{t}\|f(s+\tau)-f(s)\|^{p}\, ds \leq \epsilon^{p}, & \mbox{ provided }t\in I\mbox{ and } |t|\geq M(\epsilon,\tau).
\end{align}
Denote by $S^{p}Q-AAP(I : X)$ the set consisting of all Stepanov $p$-quasi-asymptotically almost periodic functions from $I$ into $X.$
\end{defn}

Using only definition, it readily follows that
$
Q-AAP(I : X)\subseteq S^{p}Q-AAP(I : X) ;
$
it is clear that this inclusion can be strict since the function $f(t):=\chi_{[-1,\infty)}(t),$ $t\in {\mathbb R}$ is in class $S^{p}Q-AAP({\mathbb R} : X)$ but not in class $Q-AAP({\mathbb R} : X)$ because $f(\cdot)$ is not continuous. Furthermore, it follows immediately from definition that any $S^{p}$-aap. function
is $S^{p}$-qaap. so that $AAPS^{p} (I: X) \subseteq S^{p}Q-AAP(I : X);$
this inclusion can be also strict since the continuous function $f(\cdot)$ from Remark \ref{dubgarden} is not $S^{p}$-aap.. Furthermore, if $1\leq p <p'<\infty,$
then
$S^{p'}Q-AAP(I : X)\subseteq S^{p}Q-AAP(I : X)$ and for any function $f\in L_{S}^{p}(I:X),$ we have that $f(\cdot)$ is $S^{p}$q-aap. iff the function $\hat{f} : I \rightarrow L^{p}([0,1] : X)$ is q-aap.. Using this fact and Theorem \ref{firstreuslt}, we can simply verify the validity of following result:

\begin{thm}\label{secundo}
\begin{itemize}
\item[(i)] $S^{p}AAA(I : X) \cap S^{p}Q-AAP(I : X)=S^{p}AAP(I: X)$ and $[S^{p}AAA(I : X) \setminus S^{p}AAP(I : X)] \cap S^{p}Q-AAP(I:X)=\emptyset.$
\item[(ii)] $S^{p}AA({\mathbb R}: X) \cap S^{p}Q-AAP({\mathbb R}: X)=S^{p}AP({\mathbb R}: X).$
\end{itemize}
\end{thm}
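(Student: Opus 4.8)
The plan is to reduce Theorem \ref{secundo} entirely to the already-proved Theorem \ref{firstreuslt}, by passing to the Bochner-lifted function $\hat{f} : I \rightarrow L^{p}([0,1] : X)$ and working in the Banach space $Y:=L^{p}([0,1] : X)$. The guiding principle is that each Stepanov class occurring in the statement corresponds, under $f \mapsto \hat{f}$, to its non-Stepanov analogue in $Y$. Concretely, I would first record the dictionary of correspondences supplied by the preliminaries: by the very definitions of $S^{p}$-ap. and $S^{p}$-aap., one has $f\in S^{p}AP(I:X)$ iff $\hat{f}\in AP(I:Y)$ and $f\in S^{p}AAP(I:X)$ iff $\hat{f}\in AAP(I:Y)$; the remark immediately preceding the theorem gives the equivalence that $f$ is $S^{p}$-q-aap. iff $\hat{f}\in Q-AAP(I:Y)$; and the observation recorded just after the definition of $AAAS^{p}$ gives the implications $f\in S^{p}AA({\mathbb R}:X)\Rightarrow \hat{f}\in AA({\mathbb R}:Y)$ and $f\in S^{p}AAA(I:X)\Rightarrow \hat{f}\in AAA(I:Y)$.

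Granting this dictionary, part (i) proceeds as follows. For the nontrivial inclusion, fix $f\in S^{p}AAA(I:X)\cap S^{p}Q-AAP(I:X)$. Then $\hat{f}\in AAA(I:Y)$ and $\hat{f}\in Q-AAP(I:Y)$, so Theorem \ref{firstreuslt}(i), applied in the Banach space $Y$ in place of $X$, yields $\hat{f}\in AAP(I:Y)$; by definition this says precisely $f\in S^{p}AAP(I:X)$. The reverse inclusion $S^{p}AAP \subseteq S^{p}AAA\cap S^{p}Q-AAP$ is just the conjunction of the two elementary containments already noted in the text, namely that every $S^{p}$-aap. function is both $S^{p}$-aaa. and $S^{p}$-q-aap.. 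The second assertion of (i) is then a purely formal consequence of the first, exactly as in Theorem \ref{firstreuslt}, since $[S^{p}AAA\setminus S^{p}AAP]\cap S^{p}Q-AAP=(S^{p}AAA\cap S^{p}Q-AAP)\setminus S^{p}AAP=S^{p}AAP\setminus S^{p}AAP=\emptyset$. Part (ii) is handled identically on $I={\mathbb R}$: from $f\in S^{p}AA({\mathbb R}:X)\cap S^{p}Q-AAP({\mathbb R}:X)$ one obtains $\hat{f}\in AA({\mathbb R}:Y)\cap Q-AAP({\mathbb R}:Y)=AP({\mathbb R}:Y)$ by Theorem \ref{firstreuslt}(ii), whence $f\in S^{p}AP({\mathbb R}:X)$, while the reverse inclusion $S^{p}AP\subseteq S^{p}AA\cap S^{p}Q-AAP$ is again immediate.

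The closest thing to an obstacle lies in the two entries of the dictionary that the preliminaries supply only as one-directional implications rather than equivalences, namely $f\in S^{p}AAA\Rightarrow \hat{f}\in AAA$ and $f\in S^{p}AA\Rightarrow \hat{f}\in AA$. The argument above is arranged so that these are never needed in the converse direction: I feed $\hat{f}\in AAA$ (respectively $\hat{f}\in AA$) into Theorem \ref{firstreuslt} and read off $\hat{f}\in AAP$ (respectively $\hat{f}\in AP$), and the passage back to a statement about $f$ then goes only through the \emph{definitional} equivalences for $S^{p}AAP$ and $S^{p}AP$, where no converse is required. I would also flag two points of hygiene: Theorem \ref{firstreuslt} was established for an arbitrary non-trivial complex Banach space, so it applies verbatim with $X$ replaced by $Y=L^{p}([0,1]:X)$; and the supremum formula invoked in its proof is used on the genuine $Y$-valued continuous function $\hat{f}$ (which is bounded and continuous whenever $f$ is $S^{p}$-bounded), so no extra regularity of $f$ itself intervenes.
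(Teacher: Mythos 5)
Your proposal is correct and is essentially the paper's own argument: the text establishes precisely the equivalence ``$f$ is $S^{p}$-q-aap.\ iff $\hat{f}$ is q-aap.'' immediately before the theorem and then states that the result follows from this fact together with Theorem \ref{firstreuslt} applied in $L^{p}([0,1]:X)$. Your additional care in noting that the one-directional implications for $S^{p}AAA$ and $S^{p}AA$ are only ever used in the direction actually available is a welcome clarification of the same route, not a departure from it.
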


The proof of following result is very similar to that of Proposition \ref{tabosi} and therefore omitted:

\begin{prop}\label{profice}
We have $S^{p}SAP_{\omega}(I: X)\subseteq S^{p}Q-AAP(I:X).$
\end{prop}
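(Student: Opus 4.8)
The plan is to mirror the telescoping argument used for Proposition~\ref{tabosi}, replacing the scalar modulus by the local $L^{p}$-norm and invoking Minkowski's inequality in place of the ordinary triangle inequality. Throughout, for a fixed $S^{p}$-bounded function $f(\cdot)$ I would set
\[
g(t):=\Biggl( \int^{t+1}_{t}\bigl\| f(s+\omega)-f(s)\bigr\|^{p}\, ds\Biggr)^{1/p},\quad t\in I,
\]
so that membership of $f(\cdot)$ in $S^{p}SAP_{\omega}(I:X)$ is precisely the assertion that $\lim_{|t|\to\infty}g(t)=0$.

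First I would fix $\epsilon>0$ and, exactly as in Proposition~\ref{tabosi}, take $L(\epsilon)=2\omega$; any interval $I'\subseteq I$ of length $2\omega$ then contains a point of the form $\tau=n\omega$ with $n\in{\mathbb N}_{0}$. With $n$ fixed, the idea is to estimate the local $L^{p}$-norm of $f(\cdot+n\omega)-f(\cdot)$ by writing the difference as the telescoping sum $f(s+n\omega)-f(s)=\sum_{k=0}^{n-1}\bigl[f(s+(k+1)\omega)-f(s+k\omega)\bigr]$ and applying Minkowski's inequality on $L^{p}([t,t+1]:X)$, which yields
\[
\Biggl( \int^{t+1}_{t}\bigl\| f(s+n\omega)-f(s)\bigr\|^{p}\, ds\Biggr)^{1/p}\leq \sum_{k=0}^{n-1}\Biggl( \int^{t+1}_{t}\bigl\| f(s+(k+1)\omega)-f(s+k\omega)\bigr\|^{p}\, ds\Biggr)^{1/p}.
\]
The substitution $u=s+k\omega$ identifies the $k$-th summand with $g(t+k\omega)$, so that the right-hand side equals $\sum_{k=0}^{n-1}g(t+k\omega)$.

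Next I would use the decay of $g(\cdot)$: since $g(t)\to 0$ as $|t|\to\infty$, there is a finite $M_{0}(\epsilon,n)>0$ with $g(t)\leq \epsilon/n$ whenever $|t|\geq M_{0}(\epsilon,n)$. Setting $M(\epsilon,\tau):=M_{0}(\epsilon,n)+n\omega$ and arguing as in Proposition~\ref{tabosi}, one checks that $|t|\geq M(\epsilon,\tau)$ forces $|t+k\omega|\geq M_{0}(\epsilon,n)$ for every $k\in\{0,\dots,n-1\}$. Consequently each $g(t+k\omega)\leq \epsilon/n$, the sum is bounded by $\epsilon$, and raising to the $p$-th power gives exactly the defining inequality \eqref{primer} for $S^{p}Q-AAP(I:X)$.

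The step I expect to require the most care is the bookkeeping ensuring $|t+k\omega|\geq M_{0}(\epsilon,n)$ uniformly in $k$: when $t<0$ the shifts $t+k\omega$ move toward $0$ as $k$ increases, so the margin $n\omega$ built into the choice of $M(\epsilon,\tau)$ is essential (the case $I=[0,\infty)$, where $t\geq 0$, is immediate). Everything else — Minkowski's inequality, the change of variables, and the choice $L(\epsilon)=2\omega$ — is routine and parallels the scalar proof of Proposition~\ref{tabosi} verbatim, which is presumably why the author omits it.
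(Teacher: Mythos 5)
Your argument is correct and is exactly the adaptation the paper has in mind: it omits the proof precisely because it is the telescoping argument of Proposition \ref{tabosi} with the scalar triangle inequality replaced by Minkowski's inequality on $L^{p}([t,t+1]:X)$ and a change of variables, which is what you carry out. The bookkeeping $M(\epsilon,\tau)=M_{0}(\epsilon,n)+n\omega$ to guarantee $|t+k\omega|\geq M_{0}(\epsilon,n)$ for all $k$ matches the paper's choice in Proposition \ref{tabosi} as well.
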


After introduction of Definition \ref{gorilaz}, it seems reasonable to ask whether we can analyze Weyl and Besicovitch generalizations of q-aap. functions. The following result says that the space $ S^{p}Q-AAP(I : X)$ is contained in $W_{ap}^{p}(I: X)$ and that the above question is more or less ridicolous (we have already seen that $ S^{p}Q-AAP(I : X)\nsubseteq  e-W_{ap}^{p}(I: X)$):

\begin{prop}\label{weylova-klasa}
We have $ S^{p}Q-AAP(I : X)\subseteq W_{ap}^{p}(I: X).$
\end{prop}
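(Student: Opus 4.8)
The plan is to show that the very period $\tau$ furnished by the $S^{p}$-qaap. property---one in each interval of length $L(\epsilon)$, via the shorthand (S)---already meets the Weyl requirement of Definition \ref{weyl-defn}(ii), with the choice $L=L(\epsilon)$. The crucial observation is that the set on which the defining inequality \eqref{primer} is permitted to fail, namely $\{t\in I:|t|<M(\epsilon,\tau)\}$, has length bounded by a constant independent of the averaging parameter $l$; hence its contribution to the normalized average $\frac1l\int_x^{x+l}\|f(t+\tau)-f(t)\|^{p}\,dt$ is $O(1/l)$ and vanishes as $l\to\infty$, while the rest is controlled by $\epsilon^{p}$ per unit interval.

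Concretely, I first fix $\epsilon>0$ and invoke (S) to obtain, in any prescribed interval $I'$ of length $L(\epsilon)$, a point $\tau\in I'$ together with a number $M:=M(\epsilon,\tau)$ for which \eqref{primer} holds. Writing $C:=\|f\|_{S^{p}}<\infty$ (available since $f\in L_{S}^{p}(I:X)$), I bound the integral over an arbitrary window $[x,x+l]\subseteq I$ by summing over the integer unit intervals $[k,k+1]$ that meet $[x,x+l]$, of which there are at most $l+2$. For each such $k$ with $|k|\geq M$, the inequality \eqref{primer} applied at $t=k$ gives $\int_{k}^{k+1}\|f(s+\tau)-f(s)\|^{p}\,ds\leq\epsilon^{p}$; for the finitely many indices with $|k|<M$---there are at most $2M+1$ of them, a count independent of $x$ and $l$---I instead use the elementary estimate $\|a-b\|^{p}\leq 2^{p-1}(\|a\|^{p}+\|b\|^{p})$ together with $S^{p}$-boundedness to bound each such integral by $2^{p}C^{p}$. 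Their total contribution is therefore at most $K:=(2M+1)2^{p}C^{p}$.

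Combining the two estimates yields, uniformly in $x\in I$,
\begin{align*}
\frac1l\int_{x}^{x+l}\|f(t+\tau)-f(t)\|^{p}\,dt\leq\frac{K}{l}+\frac{(l+2)\,\epsilon^{p}}{l}.
\end{align*}
Taking the supremum over $x$ and then letting $l\to\infty$ sends the right-hand side to $\epsilon^{p}$, so that $\lim_{l\to\infty}\sup_{x\in I}\bigl[\frac1l\int_{x}^{x+l}\|f(t+\tau)-f(t)\|^{p}\,dt\bigr]^{1/p}\leq\epsilon$. Since $\tau$ was extracted from an arbitrary interval of length $L(\epsilon)$, this is precisely Definition \ref{weyl-defn}(ii), so $f\in W_{ap}^{p}(I:X)$.

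The only genuinely delicate point is ensuring every bound is uniform in the window position $x$; this is exactly what forces the argument through integer-aligned unit intervals (so that \eqref{primer} applies verbatim at $t=k$) and requires the number of transition intervals near $\pm M$ to be controlled independently of $x$ and $l$. Everything else is routine, and the case $I=[0,\infty)$ is only easier, since there the exceptional set reduces to $[0,M)$.
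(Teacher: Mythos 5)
Your proof is correct and follows essentially the same route as the paper's: decompose the averaging window $[x,x+l]$ into unit intervals, apply \eqref{primer} on those lying in $\{t : |t|\geq M(\epsilon,\tau)\}$, absorb the boundedly many exceptional intervals near the origin via $S^{p}$-boundedness and the $2^{p-1}$-inequality, and let $l\to\infty$ so that their $O(1/l)$ contribution disappears. Your integer-aligned covering merely streamlines the paper's explicit four-case analysis on the positions of $x$ and $x+l$ relative to $\pm M(\epsilon,\tau)$.
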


\begin{proof}
Let $\epsilon>0$ be given. Then (S) holds with a number $M(\epsilon,\tau)>0$ such that \eqref{primer} is satisfied. We need to estimate the term
\begin{align}\label{vasq}
\sup_{x\in I}\Biggl[ \frac{1}{l}\int_{x}^{x+l}\bigl \| f(t+\tau) -f(t)\bigr\|^{p}\, dt\Biggr]^{1/p} 
\end{align}
as $l\rightarrow +\infty.$ There exist four possibilities:
\begin{itemize}
\item[1.] $|x|\geq M(\epsilon,\tau)$ and $|x+l|\geq M(\epsilon,\tau).$
\item[2.] $|x|\geq M(\epsilon,\tau)$ and $|x+l|\leq M(\epsilon,\tau).$
\item[3.] $|x|\leq M(\epsilon,\tau)$ and $|x+l|\geq M(\epsilon,\tau).$
\item[4.] $|x|\leq M(\epsilon,\tau)$ and $|x+l|\leq M(\epsilon,\tau).$
\end{itemize}
Let us consider the first case. For $x\geq 0,$ we have
\begin{align*}
\frac{1}{l}& \int_{x}^{x+l}\bigl \| f(t+\tau) -f(t)\bigr\|^{p}\, dt
\\ \leq & \frac{1}{l}\Biggl( \int^{x+1}_{x}\bigl \| f(t+\tau) -f(t)\bigr\|^{p}\, dt+\cdot \cdot \cdot +\int^{x+l}_{x+\lfloor l\rfloor}\bigl \| f(t+\tau) -f(t)\bigr\|^{p}\, dt\Biggr)
\leq \frac{1}{l}l\epsilon^{p}. 
\end{align*}
If $I={\mathbb R}$ and $x\leq 0,$ then $x\leq  -M(\epsilon,\tau),$ $x+l\geq M(\epsilon,\tau)$
and arguing as above we get
\begin{align*}
\frac{1}{l}& \int_{x}^{x+l}\bigl \| f(t+\tau) -f(t)\bigr\|^{p}\, dt
\\ \leq & \frac{1}{l}\int^{-M(\epsilon,\tau)}_{x}\bigl \| f(t+\tau) -f(t)\bigr\|^{p}\, dt
\\ + & \frac{1}{l}
\int_{-M(\epsilon,\tau)}^{M(\epsilon,\tau)}\bigl \| f(t+\tau) -f(t)\bigr\|^{p}\, dt+\frac{1}{l}
\int_{M(\epsilon,\tau)}^{x+l}\bigl \| f(t+\tau) -f(t)\bigr\|^{p}\, dt
\\ \leq & \frac{1}{l}\int^{-M(\epsilon,\tau)}_{x}\bigl \| f(t+\tau) -f(t)\bigr\|^{p}\, dt
\\ + & \frac{2^{p-1}(M(\epsilon,\tau)+2)}{l}\|f\|_{S^{p}}^{p}+
\frac{1}{l}
\int_{M(\epsilon,\tau)}^{l-M(\epsilon,\tau)}\bigl \| f(t+\tau) -f(t)\bigr\|^{p}\, dt
\\ \leq & \frac{\epsilon^{p}}{l}(-M(\epsilon,\tau)-x)+ \frac{2^{p-1}(M(\epsilon,\tau)+2)}{l}\|f\|_{S^{p}}^{p}+\frac{\epsilon^{p}}{l}(l-2M(\epsilon,\tau))
\\
\leq & 2\epsilon^{p}+\frac{2^{p-1}(M(\epsilon,\tau)+2)}{l}\|f\|_{S^{p}}^{p}.
\end{align*}
This implies the existence of a sufficiently large number $l(\epsilon,\tau)>0$ such that the term in \eqref{vasq} is not greater than $\epsilon$ for any $l\geq l(\epsilon,\tau).$ The analysis of cases 2.-4. is analogous and therefore omitted.
\end{proof}

Proposition \ref{weylova-klasa} is motivated by the old results of
A. S. Kovanko \cite[Th\'eor\`eme I, Th\'eor\`eme II]{kovanko-prim}, where the notion of asymptotical almost periodicity has been taken in a slightly different manner.
It is also worth noting that the inclusion $ S^{p}Q-AAP(I : X)\subseteq W_{ap}^{p}(I: X)$ is strict because the space $ W_{ap}^{p}(I: X)$ contains certain Stepanov unbounded functions (see e.g. \cite[Example 4.28]{deda} with $I={\mathbb R}$ and $p=1$). 

Further on, arguing as in the proofs of structural results of \cite[pp. 3-4]{besik}, we may deduce the following:

\begin{thm}\label{krew}
Let $f  : I\rightarrow X$ be q-aap. ($S^{p}$-qaap.). Then we have:
\begin{itemize}
\item[(i)] $cf(\cdot)$ is q-aap. ($S^{p}$-qaap.) for any $c\in {\mathbb C}.$
\item[(ii)] If $X={\mathbb C}$ and $\inf_{x\in {\mathbb R}}|f(x)|=m>0,$ then $1/f(\cdot)$ is q-aap. ($S^{p}$-qaap.).
\item[(iii)] If $(g_{n}: I \rightarrow X)_{n\in {\mathbb N}}$ is a sequence of q-aap. functions and $(g_{n})_{n\in {\mathbb N}}$ converges uniformly to a function $g: I \rightarrow X$, then
$g(\cdot)$ is q-aap..
\item[(iv)] If $(g_{n}: I \rightarrow X)_{n\in {\mathbb N}}$ is a sequence of $S^{p}$-qaap. functions and $(g_{n})_{n\in {\mathbb N}}$ converges to a function $g: I \rightarrow X$ in the space $L_{S}^{p}(I:X)$, then
$g(\cdot)$ is $S^{p}$-qaap..
\item[(v)] The functions $f(\cdot+a)$ and $f(b\, \cdot)$ are likewise q-aap. ($S^{p}$-qaap.),
where $a\in I$ and $b\in I \setminus \{0\}.$
\end{itemize}
\end{thm}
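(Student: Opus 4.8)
The plan is to prove each of the five structural properties by reducing, wherever possible, to the defining conditions in Definition \ref{prc-vag} and Definition \ref{gorilaz}, exploiting the shorthand (S) that isolates the selection of the almost-period $\tau$ from the choice of the threshold $M(\epsilon,\tau)$. The guiding observation is that the quasi-asymptotic almost periodicity is an $\epsilon$-$\tau$-$M$ condition in which $\tau$ is drawn from a relatively dense set but $M$ is permitted to depend on $\tau$; consequently, scalar rescaling of the function, uniform (or $L^p_S$) limits, and translations/dilations of the argument all interact cleanly with this condition.

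First I would dispose of (i), (iii), (iv) and (v), which are essentially immediate. For (i), given $\epsilon>0$ apply the hypothesis to $f$ with tolerance $\epsilon/(|c|+1)$; the same $L$ and the same $\tau$ work for $cf$ with the new threshold, since $\|cf(t+\tau)-cf(t)\| = |c|\,\|f(t+\tau)-f(t)\|$, and analogously in the $S^p$ norm. For (iii), the standard $\eps/3$-type argument applies: choose $n$ so that $\|g_n-g\|_\infty\le\epsilon/3$, select $L$ and $\tau$ for $g_n$ at tolerance $\epsilon/3$, and estimate $\|g(t+\tau)-g(t)\|\le\|g(t+\tau)-g_n(t+\tau)\|+\|g_n(t+\tau)-g_n(t)\|+\|g_n(t)-g(t)\|$, where the first and third terms are controlled globally by the uniform convergence and the middle term by the q-aap. property of $g_n$ for $|t|\ge M(\epsilon/3,\tau)$; (iv) is identical after replacing the sup-norm by the $L^p_S$-norm and using the triangle inequality in $L^p([t,t+1]:X)$. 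For (v), translation invariance follows because the change of variable $t\mapsto t+a$ merely shifts the threshold (enlarge $M$ to absorb $|a|$), and the dilation $f(b\,\cdot)$ reduces to the original condition via the substitution $s=bt$, again with $L$ and $M$ rescaled by $|b|$; the continuity or $L^p_{loc}$ membership and boundedness are preserved.

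The substantive step is (ii), the closure of the scalar class under $f\mapsto 1/f$ under the lower bound $\inf_{x}|f(x)|=m>0$. The identity $\frac1{f(t+\tau)}-\frac1{f(t)} = \frac{f(t)-f(t+\tau)}{f(t+\tau)\,f(t)}$ together with $|f(t+\tau)f(t)|\ge m^2$ shows that $|1/f(t+\tau)-1/f(t)|\le m^{-2}\,|f(t+\tau)-f(t)|$, so in the continuous (q-aap.) case one simply applies the hypothesis at tolerance $m^2\epsilon$ and retains the same $\tau$ with an enlarged $M$. The $S^p$ case is where I expect the only real friction: the pointwise factor $f(t+\tau)f(t)$ must be bounded below inside the integral, which is guaranteed by the uniform lower bound $m$, yielding $\int_t^{t+1}|1/f(s+\tau)-1/f(s)|^p\,ds \le m^{-2p}\int_t^{t+1}|f(s+\tau)-f(s)|^p\,ds$, and then invoking Definition \ref{gorilaz} at tolerance $m^2\epsilon$ closes the argument. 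One must also confirm that $1/f$ is itself $S^p$-bounded, which follows from $|1/f|\le 1/m$.

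The main obstacle, such as it is, is bookkeeping rather than conceptual: in every item the chosen $\tau$ must be kept fixed while the threshold $M(\epsilon,\tau)$ is enlarged to accommodate the manipulation (a shift by $a$, a scaling by $b$, or the passage through the convergent sequence), and one must check that the relative density of the $\tau$-set—encoded in the length $L$—is preserved under these operations. Since the shorthand (S) already cleanly separates the relatively dense selection from the threshold, these verifications are routine; the only place requiring a genuine inequality is the algebraic bound for $1/f$, and the uniform lower bound $m>0$ makes even that elementary. Thus the entire proof follows the template of \cite[pp. 3-4]{besik}, as indicated.
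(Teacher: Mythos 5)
Your proposal is correct and follows essentially the same route as the paper, which simply invokes the elementary structural arguments of Besicovitch (scalar multiples, reciprocals under a uniform lower bound, uniform and $S^{p}$-limits via the $\epsilon/3$ device, and translations/dilations by rescaling $L$ and $M$) adapted to the $\epsilon$--$\tau$--$M(\epsilon,\tau)$ format of Definitions \ref{prc-vag} and \ref{gorilaz}. The bookkeeping you describe --- keeping $\tau$ and the relatively dense selection fixed while enlarging the threshold $M(\epsilon,\tau)$ --- is exactly what is needed, and the inequality $|1/f(t+\tau)-1/f(t)|\leq m^{-2}|f(t+\tau)-f(t)|$ handles (ii) in both the continuous and Stepanov cases.
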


Concerning the pointwise products of (Stepanov) scalar-valued q-aap. functions and (Stepanov) vector-valued q-aap. functions, the following classes play an important role:

\begin{defn}\label{klase}
By $Q_{h}-AAP(I:X)$ ($S^{p}Q_{h}-AAP(I : X)$)
we denote the class consisting of all q-aaa. ($S^{p}$-qaap.) $f : I\rightarrow X$ satisfying that for each $\epsilon>0$ and $\tau \in I$ there exists a finite number $M(\epsilon,\tau)>0$ such that \eqref{dub garden} (\eqref{primer}) holds true.
\end{defn}

The functions from Example \ref{dubgarden121} belong to the class $Q_{h}-AAP(I:X).$ It is also worth noting that the class $S^{p}Q_{h}-AAP(I : X)$ contains all functions that are S-asymptotically $\omega$-periodic in the Stepanov
sense for any number $\omega>0.$
Let $f : I\rightarrow X$ and $g : I \rightarrow {\mathbb C}$ be given. Using the elementary definitions and inequality
$$
\|fg(t+\tau)-fg(t)\|\leq |g(t+\tau)| \|f(t+\tau)-f(t)\| +\|f(t)\| |g(t+\tau)-g(t)|,\ t,\ \tau \in I,
$$
it readily follows the validity of following proposition:

\begin{prop}\label{raw}
Let the functions $f : I\rightarrow X$ and $g : I \rightarrow {\mathbb C}$ be bounded. If $f\in Q_{h}-AAP(I:X)$ and $g\in Q-AAP(I:{\mathbb C})$ ($f\in S^{p}Q_{h}-AAP(I:X)$ and $g\in S^{p}Q-AAP(I:{\mathbb C})$) or $f\in Q-AAP(I:X)$ and $g\in Q_{h}-AAP(I:{\mathbb C})$ ($f\in S^{p}Q-AAP(I:X)$ and $g\in S^{p}Q_{h}-AAP(I:{\mathbb C})$), then we have $fg\in Q-AAP(I:X)$ ($fg\in S^{p}Q-AAP(I:X)$).
\end{prop}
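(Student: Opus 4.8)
The plan is to derive everything from the pointwise inequality displayed just before the statement, combined with the boundedness of $f$ and $g$ and the distinguishing feature of the subclass $Q_{h}-AAP$ (resp. $S^{p}Q_{h}-AAP$): for a member of this subclass the asymptotic estimate \eqref{dub garden} (resp. \eqref{primer}) holds for \emph{every} $\tau\in I$, not only for the translation numbers supplied by the relative density condition. This is precisely what allows a single $\tau$ to serve both factors simultaneously. Note that $fg$ is automatically bounded (and continuous in the non-Stepanov case) as a product of bounded functions, so only the translation estimate needs attention.

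I would first treat the case $f\in Q_{h}-AAP(I:X)$, $g\in Q-AAP(I:{\mathbb C})$. Fix $\epsilon>0$; from $g\in Q-AAP(I:{\mathbb C})$ I obtain a length $L(\epsilon)>0$ such that every interval of length $L(\epsilon)$ contains a $\tau$ with $|g(t+\tau)-g(t)|\leq\epsilon$ once $|t|\geq M_{g}(\epsilon,\tau)$. For that same $\tau$, membership $f\in Q_{h}-AAP(I:X)$ yields $M_{f}(\epsilon,\tau)$ with $\|f(t+\tau)-f(t)\|\leq\epsilon$ for $|t|\geq M_{f}(\epsilon,\tau)$. Plugging these two bounds into the displayed inequality and using $\sup\|f\|=:\|f\|_{\infty}<\infty$, $\sup|g|=:\|g\|_{\infty}<\infty$, I get
\begin{align*}
\|(fg)(t+\tau)-(fg)(t)\|\leq\bigl(\|f\|_{\infty}+\|g\|_{\infty}\bigr)\epsilon,\qquad |t|\geq\max\{M_{f}(\epsilon,\tau),M_{g}(\epsilon,\tau)\},
\end{align*}
which is exactly the defining condition of $Q-AAP(I:X)$ after renaming $\epsilon$ (observe that $L(\epsilon)$ depends on $\epsilon$ alone). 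The case $f\in Q-AAP(I:X)$, $g\in Q_{h}-AAP(I:{\mathbb C})$ is symmetric, the relatively dense family of translation numbers now coming from $f$ and the $Q_{h}$-membership of $g$ guaranteeing admissibility of that same $\tau$ for $g$.

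For the Stepanov variants I would repeat the argument after raising the pointwise inequality to the $p$-th power and applying $(a+b)^{p}\leq 2^{p-1}(a^{p}+b^{p})$, so that integration over $[t,t+1]$ splits $\int^{t+1}_{t}\|(fg)(s+\tau)-(fg)(s)\|^{p}\,ds$ into two pieces controlled by $\int^{t+1}_{t}\|f(s+\tau)-f(s)\|^{p}\,ds$ and $\int^{t+1}_{t}|g(s+\tau)-g(s)|^{p}\,ds$; each is $\leq\epsilon^{p}$ for large $|t|$ by \eqref{primer} applied to the two factors with the common $\tau$. The one genuinely delicate point—and the reason the subclasses $Q_{h}-AAP$ / $S^{p}Q_{h}-AAP$ are built into the hypotheses—is the need for a single translation number $\tau$ admissible for both factors at once; pointwise products of two arbitrary $Q-AAP$ functions can fail (cf. Example \ref{mangup}), so the whole argument hinges on the $Q_{h}$-membership of one factor, which makes every $\tau$ admissible for it and in particular the one produced by the other factor's relative density.
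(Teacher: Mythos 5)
Your proposal is correct and follows exactly the route the paper intends: the paper's proof consists precisely of the displayed pointwise inequality together with the remark that the $Q_{h}$-membership of one factor makes the translation number supplied by the other factor admissible for both, which is what you spell out (including the $2^{p-1}$ splitting for the Stepanov case). No gaps; your write-up is simply a fleshed-out version of the paper's one-line argument.
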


The conclusion established in the above proposition cannot be deduced if the functions $f(\cdot)$ and $g(\cdot)$ belong to the classes of qaap. functions or Stepanov qaap. functions, as the next instructive examples shows:


\begin{example}\label{mangup}
We have that $AP({\mathbb R}:{\mathbb C}) \cdot  SAP_{2}({\mathbb R}: {\mathbb C})$ is not a subset of $Q-AAP({\mathbb R} :{\mathbb C})$ and, in particular, $Q-AAP({\mathbb R} : {\mathbb C} ) \cdot Q-AAP({\mathbb R} : {\mathbb C} )$ is not a subset of $Q-AAP({\mathbb R} :{\mathbb C})$. A typical example of function belonging to the space $[AP({\mathbb R}:{\mathbb C}) \cdot  SAP_{2}({\mathbb R}: {\mathbb C})] \cap S^{p}AAP({\mathbb R} : {\mathbb C})$ but not to the space $Q-AAP({\mathbb R} :{\mathbb C})$ is the function $\cos(\sqrt{2}\pi \cdot)f(\cdot),$ which can be verified as for the function $fg(\cdot)$ considered below, but much simpler.

Assume that $\alpha, \ \beta \in {\mathbb R}$ and $\alpha \beta^{-1}$ is a well-defined irrational number. Then the functions
$$
g_{0}(t)=\sin\Bigl(\frac{1}{2+\cos \alpha t + \cos \beta t}\Bigr),\ t\in {\mathbb R}
$$
and
$$
g(t)=\cos\Bigl(\frac{1}{2+\cos \alpha t + \cos \beta t}\Bigr),\ t\in {\mathbb R}
$$
are Stepanov $p$-ap. but not ap.. These functions are bounded continuous, not uniformly continuous and cannot be q-aap. because they are also aa. (see Theorem \ref{firstreuslt} and \cite{nova-mono} for more details). Consider now the case that $\alpha =\pi$ and $\beta =\sqrt{2}\pi$ for function $g(\cdot).$ 
Let the function $f(\cdot)$ be defined on the real line by zero outside the non-negative real axis and by $f(t):=f_{\overline{\{1/n+1\}}}(t),$ $t\geq 0,$ where the function $f_{\overline{\{1/n+1\}}}(\cdot)$ has the same meaning in \cite[Example 17]{xie}. That is,
we define $f_{\overline{\{1/n+1\}}}(\cdot)$ by $f_{\overline{\{1/n+1\}}}(t):=0$ for $t\in \{0,2, 2n+1-\frac{1}{n+1},2n+1+\frac{1}{n+1} : n\in {\mathbb N}\},$ $f_{\overline{\{1/n+1\}}}(t):=1$ for $t\in 2{\mathbb N}+1$ and linearly outside. 
Assume that for each $\epsilon>0$ (S) holds with $I= [0,\infty)$ and a number $M(\epsilon,\tau)>0$ satisfying \eqref{dub garden}. If $\tau \notin 2{\mathbb N},$ then there exist two integers $k \in {\mathbb N}_{0},$ $n_{0}\in {\mathbb N}$ and a real number $d\in (0,2)$ such that $\tau=2k+d$ and
$d>2/1+n$ for all $n\geq n_{0}.$ Take now any number $s=2n+1,$ where
$n\in {\mathbb N}$ is chosen so that  
$2n+1\geq M(\epsilon,\tau)$
and 
\begin{align}\label{gejaku}
\Biggl| \cos\Bigl( \frac{1}{1+\cos \sqrt{2}(2n+1)\pi}\Bigr) \Biggr|> \frac{4\epsilon}{3}.
\end{align}
The existence of such a number can be easily shown. Then 
$f(2n+1)\geq 3/4$ and $2n+1+\tau \in [2(n+k)+\frac{1}{n+k+1}, 2(n+k+1)+1-\frac{1}{n+k+1}]$ due to the inequality $d>2/1+n.$ In combination with \eqref{gejaku}, the above yields $fg(2n+1+\tau)=0$ and $|fg(2n+1)|> \epsilon,$ which is a contradicition since \eqref{dub garden} holds with $t=2n+1.$ If $\tau=2k$ for some $k\in {\mathbb N},$ then there exists a sufficiently large $s_{0}(\epsilon)>0$ such that
\begin{align*}
|fg(s+2k)-fg(s)|& \geq |f(s+2k)| |g(s+2k)-g(s)| - |f(s+2k)-f(s)|
\\ \geq & |f(s+2k)| |g(s+2k)-g(s)| -\frac{\epsilon}{2},\ s\geq s_{0}(\epsilon).
\end{align*}
If $n\in {\mathbb N}$ is arbitrary and $s=2n+1\geq s_{0}(\epsilon),$ the above estimate yields
\begin{align}\label{fdg}
|fg(2n+1+2k)-fg(2n+1)| \geq \frac{3}{4} |g(2n+1+2k)-g(2n+1)| - \frac{\epsilon}{2}.
\end{align}
Further on, it is very elementary to prove that there exists a strictly increasing sequence $(2a_{l,k}+1)_{l\in {\mathbb N}}$ of odd integers such that the inequality
$$
\Biggl| \cos\Bigl( \frac{1}{1+\cos \sqrt{2}(2a_{l,k}+1+2k)\pi}\Bigr)-\cos\Bigl( \frac{1}{1+\cos \sqrt{2}(2a_{l,k}+1)\pi}\Bigr) \Biggr|\geq \frac{8\epsilon}{3}
$$
holds 
provided $l\in {\mathbb N}.$ In combination with \eqref{fdg}, we get that \eqref{dub garden} does not hold with $s=2n+1.$
Hence, $fg \notin Q-AAP({\mathbb R} : {\mathbb C})$ while in the meantime $g\in S^{p}Q-AAP({\mathbb R} : {\mathbb C}) \setminus Q-AAP({\mathbb R} : {\mathbb C})$ and $f\in SAP_{2}({\mathbb R} : {\mathbb C}).$ 
Since the function $f(\cdot)$ is Stepanov $p$-vanishing, i.e, $\lim_{t\rightarrow +\infty}\int^{t+1}_{t}|f(s)|^{p}\, ds=0,$ it can be easily seen that $fg\in S^{p}AAP([0,\infty): X),$ as well.
\end{example}

\begin{example}\label{prcko-qaz}
It is very simple to illustrate that the pointwise product of two essentially bounded functions from $S^{p}Q-AAP(I : {\mathbb C})$ need not belong to the same class. We will show this only in the case that $I=[0,\infty)$ by giving an example of a Stepanov $p$-ap. function $g(\cdot)$ and a function $f\in SAP_{4}([0,\infty) : {\mathbb C})$ such that $fg\notin S^{p}Q-AAP([0,\infty) : {\mathbb C}).$ 
To see this, put $g(t):=\mbox{sign}(\sin t),$ $t\geq 0,$ where sign$(0):=0.$ Then it is well known that $g(\cdot)$ is Stepanov $p$-ap. function; see e.g. \cite{nova-mono}. We construct $f(\cdot)$ in the following way:
Define $f(t):=0$ for $t\in \{0,43/10,46/10, 4n+\frac{1}{2}-\frac{1}{4n+1},4n+\frac{3}{2}+\frac{1}{4n+1} : n\in {\mathbb N}\},$ $f(t):=1$ for $t\in \cup_{n\in {\mathbb N}}[4n+1/2,4n+3/2]$ and linearly outside. Then it can be easily seen that $f\in SAP_{4}([0,\infty) : {\mathbb C}).$ Furthermore, the function $fg(\cdot)$ cannot be in class $S^{p}Q-AAP([0,\infty) : {\mathbb C})$
because if we suppose the contrary, then we can always take the segment $[4n+1/2,4n+3/2]\subseteq [M(\epsilon,\tau),\infty)$ sufficiently large, with the meaning clear, and the condition \eqref{primer} will be always violated with $t=4n+1/2.$ This can be seen by considering separately two possible cases: $\tau \in 4{\mathbb N}$ and $\tau \notin 4{\mathbb N}.$ In the first case, we have the existence of a number $k\in {\mathbb N}$ such that $\tau=4k.$ Then
\begin{align*}
\int^{4n+\frac{3}{2}}_{4n+\frac{1}{2}}&  \bigl|\mbox{sign}(\sin (s+\tau))f(s+\tau)-\mbox{sign}(\sin s)f(s)\bigr|^{p}\, ds 
\\ = &
\int^{4n+\frac{3}{2}}_{4n+\frac{1}{2}} \bigl|\mbox{sign}(\sin (s+\tau))-\mbox{sign}(\sin s)\bigr|^{p}\, ds 
\end{align*}
for all $n\in {\mathbb N}.$ Further on, observe that 
$$
\sin (s+\tau)-\sin s=2\sin 2k \cos (s+2k),\ s\in {\mathbb R}.
$$
If $\sin 2k>0,$ the terms $\sin(s+\tau)$ and $\sin s$ will have different signs for all $s\in [4n+1/2,4n+3/2]$ provided that there exists a natural number $m\in {\mathbb N}$ such that $4n+2k+1/2$ and $4n+2k+3/2$ belong to the set $(\pi/2+2m\pi,3\pi/2+2m\pi).$ This could happen for arbitrarily large values of $n\in {\mathbb N},$ so that \eqref{primer} does not hold. The examination is similar provided that $\sin 2k<0.$

In the second case, let $\tau=4m+\tau_{0}$ for some $m\in {\mathbb N}_{0}$ and $\tau_{0}\in (0,4).$ 
Since the integer multiples of $\pi$ get arbitrarily close to the integers, there is a strictly increasing sequence of natural numbers $(n_{k})_{k\in {\mathbb N}}$
such that $[4n_{k}+1/2,4n_{k}+3/2]\subseteq \cup_{l\in {\mathbb N}}(2l\pi,(2l+1)\pi)$. For $n=n_{k}\geq 
M(\epsilon,\tau),$ we have $\mbox{sign}(\sin s)=1$ for all $s\in [4n_{k}+1/2,4n_{k}+3/2]$ and
we can use the estimate
\begin{align}
\notag \int^{4n_{k}+\frac{3}{2}}_{4n_{k}+\frac{1}{2}}& \bigl|\mbox{sign}(\sin (s+\tau))f(s+\tau)-\mbox{sign}(\sin s)\bigr|^{p}\, ds 
\\ \notag \geq & 2^{p-1}  \int^{4n_{k}+\frac{3}{2}}_{4n_{k}+\frac{1}{2}}\Bigl[1-\bigl|f(s+\tau)\bigr|^{p}\Bigr]\, ds
\\ \label{smrcu}= & 2^{p-1}\Biggl[1 -\int^{4n_{k}+\tau+\frac{3}{2}}_{4n_{k}+\tau+\frac{1}{2}}\bigl|f(s)\bigr|^{p}\, ds\Biggr] .
\end{align}
After proving that there exist a zero sequence $(a_{k})_{k\in {\mathbb N}}$ and a positive constant $c(\tau_{0})\in (0,1)$ such that $$
\int^{4(n_{k}+m)+\tau_{0}+3/2}_{4(n_{k}+m)+\tau_{0}+1/2}|f(s)|^{p}\, ds >c(\tau_{0})+a_{k}
$$ 
for all $k\in {\mathbb N}$ such that $n_{k}\geq M(\epsilon,\tau),$ the violation od \eqref{primer} becomes apparent. 
\end{example}

It is clear that the sets $Q_{h}-AAP(I : X)$ and $S^{p}Q_{h}-AAP(I : X)$ equipped with the usual operations of pointwise sums and products with scalars form vector spaces. This is no longer true for the spaces $Q-AAP(I : X)$ and $S^{p}Q-AAP(I : X),$ as the following example shows:

\begin{example}\label{primerusa}
For the sake of simplicity, we will consider only the case that $I=[0,\infty).$ Let $f(\cdot)$ and $g(\cdot)$ be as in the former example. Repeating the same argumentation (in the case that $\tau \notin 4{\mathbb N},$ then we can use the estimate $|\mbox{sign}(\sin (s+\tau))+f(s+\tau)-\mbox{sign}(\sin s)-f(s)|=|\mbox{sign}(\sin (s+\tau))+f(s+\tau)-2|\geq 1-|f(s+\tau)|$ for all $s\in [4n_{k}+1/2,4n_{k}+3/2]\subseteq \cup_{l\in {\mathbb N}}(2l\pi,(2l+1)\pi)$), we get that $f+g\notin S^{p}Q-AAP([0,\infty) : {\mathbb C}),$ so that $S^{p}Q-AAP([0,\infty) :  {\mathbb C})+S^{p}Q-AAP([0,\infty) :  {\mathbb C})$ is not contained in $S^{p}Q-AAP([0,\infty) :  {\mathbb C}).$ 
Now we will prove by a simple indirect proof that $AP([0,\infty) :  {\mathbb C})+SAP_{4}([0,\infty) :  {\mathbb C})$ is not contained in class $S^{p}Q-AAP([0,\infty) :  {\mathbb C})$ so that $Q-AAP([0,\infty) : X)$ cannot be a vector space, as announced above. There exists a sequence $(g_{n})_{n\in {\mathbb N}}$ converging to $g(\cdot)$ in $L^{p}_{S}([0,\infty) : {\mathbb C}).$ This implies that $g_{n}+f$ converges to $g+f$ in $L^{p}_{S}([0,\infty) : {\mathbb C})$ as $n\rightarrow \infty.$ Due to Theorem \ref{krew}(iv), there exists a number $n_{0}\in {\mathbb N}$ such that $g_{n_{0}}+f$ is not in class $S^{p}Q-AAP([0,\infty) :  {\mathbb C}).$ The interested reader may try to provide some concrete examples here.
\end{example}

The compactness in the spaces of (equi-)Weyl-$p$-almost periodic functions has been analyzed in \cite{53}-\cite{54} with the help of Lusternik type theorems. It is without scope this paper to analyze similar problems for the space of q-aap. functions and its Stepanov generalizations.

\subsection{Quasi-asymptotically almost periodic functions depending on two parameters and composition principles}\label{ne-to}

The main aim of this subsection is to investigate q-aap. functions depending on two parameters and compositions of q-aap. functions. We start with the folowing definition:

\begin{defn}\label{two-par}
Suppose that $F : I \times X \rightarrow Y$ is a continuous function. Then we say that $F(\cdot,\cdot)$ is quasi-asymptotically almost periodic,
uniformly on bounded subsets of $X,$ iff for each $\epsilon>0$ (S) holds with a number $M(\epsilon,\tau)>0$ such that for each bounded subset $B$ of $X$ we have:
\begin{align*}
\bigl\| F(t+\tau ,x)-F(t,x)\bigr\|_{Y}\leq \epsilon,\mbox{ provided }t\in I, \ x\in B\mbox{ and } |t|\geq M(\epsilon,\tau).
\end{align*}
Denote by $Q-AAP(I \times  X : Y)$ the set consisting of all quasi-asymptotically almost periodic functions from $I \times X$ into $Y.$
\end{defn}

Arguing as in the proofs of \cite[Theorem 3.30, Theorem 3.31]{diagana}, we may deduce the following results about compositions of q-aap. functions:

\begin{thm}\label{prvi-comp}
Suppose that $F\in Q-AAP(I \times  X : Y)$ and $f\in Q-AAP(I : X).$ If there exists a finite number $L>0$ such that
\begin{align}\label{razmisli}
\bigl\|F(t,x)-F(t,y)\bigr\|_{Y}\leq L\|x-y\|,\ x,\ y\in X,\ t\in I,
\end{align}
then the function $t\mapsto F(t,f(t)),$ $t\in I$ belongs to the class $Q-AAP(I : Y).$
\end{thm}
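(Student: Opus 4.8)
The plan is to show directly that the composed function $g(t):=F(t,f(t))$ satisfies the defining condition of $Q-AAP(I:Y)$ from Definition \ref{prc-vag}. Fix $\epsilon>0$. The natural idea is to split the displacement $g(t+\tau)-g(t)$ by inserting the intermediate term $F(t+\tau,f(t))$, writing
\begin{align*}
\bigl\|g(t+\tau)-g(t)\bigr\|_{Y}\leq \bigl\|F(t+\tau,f(t+\tau))-F(t+\tau,f(t))\bigr\|_{Y}+\bigl\|F(t+\tau,f(t))-F(t,f(t))\bigr\|_{Y}.
\end{align*}
The first term is controlled by the Lipschitz hypothesis \eqref{razmisli}: it is at most $L\|f(t+\tau)-f(t)\|$. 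The second term is exactly the kind of quantity controlled by the almost periodicity of $F$ uniform on bounded sets, provided we know that $f(t)$ ranges over a fixed bounded set, which it does since $f\in Q-AAP(I:X)\subseteq C_b(I:X)$; take $B$ to be a closed ball containing the (bounded) range of $f$.

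The main structural issue is that $f$ and $F$ each come with their \emph{own} translation numbers and their own thresholds, and these need to be reconciled into a single common $\tau$ and a single $M$. First I would apply the shorthand (S) to $f$ with the tolerance $\epsilon/(2L)$: this yields a length $L_f>0$ so that every interval of that length contains a $\tau$ with a threshold $M_f(\epsilon,\tau)$ beyond which $\|f(t+\tau)-f(t)\|\leq \epsilon/(2L)$. Separately, $F$ supplies a length $L_F>0$ for tolerance $\epsilon/2$ on the bounded set $B$, with its own threshold. The delicate point is that the almost periods for $f$ and for $F$ need not coincide, so a single $\tau$ that works simultaneously for both must be extracted.

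The clean way to force a common $\tau$ is to invoke the relative density structure built into (S): both $\vartheta$-type sets are relatively dense, but to guarantee a common translation number I would instead follow the approach of \cite[Theorem 3.30, Theorem 3.31]{diagana}, where one observes that the construction can be arranged so that the almost period $\tau$ for $f$ is simultaneously admissible for $F$. Concretely, one takes $L(\epsilon):=\max\{L_f,L_F\}$ and uses that any interval of this length contains a $\tau$ working for $f$; one then verifies that the uniform condition on $F$ may be applied at this same $\tau$ (this is where the uniformity over the bounded set $B$, rather than pointwise-in-$x$ control, is essential, since it lets the $F$-estimate hold for the single chosen $\tau$ regardless of which $x\in B$ appears). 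Setting $M(\epsilon,\tau):=\max\{M_f,M_F\}$ for this $\tau$, both pieces are simultaneously below their budgets whenever $|t|\geq M(\epsilon,\tau)$.

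Assembling the two estimates gives, for $|t|\geq M(\epsilon,\tau)$,
\begin{align*}
\bigl\|g(t+\tau)-g(t)\bigr\|_{Y}\leq L\cdot\frac{\epsilon}{2L}+\frac{\epsilon}{2}=\epsilon,
\end{align*}
which is precisely \eqref{dub garden} for $g$. Continuity of $g$ follows from continuity of $F$ and of $f$, and boundedness of $g$ follows from boundedness of $f$ together with the Lipschitz bound \eqref{razmisli} (which controls $\|F(t,f(t))\|_Y$ in terms of $\|F(t,x_0)\|_Y$ for a fixed base point $x_0\in B$), so $g\in C_b(I:Y)$ and hence $g\in Q-AAP(I:Y)$. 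I expect the reconciliation of the two independent families of almost periods into one common $\tau$ to be the only genuinely subtle step; everything else is the triangle-inequality splitting together with the Lipschitz and uniform-boundedness hypotheses.
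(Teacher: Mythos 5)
Your skeleton (insert $F(t+\tau,f(t))$, control one term by the Lipschitz condition \eqref{razmisli} and the other by the quasi-asymptotic almost periodicity of $F(\cdot,x)$ uniformly on a ball $B$ containing the bounded range of $f$) is exactly what the paper intends by ``arguing as in the proofs of \cite[Theorem 3.30, Theorem 3.31]{diagana}'', and your treatment of continuity and boundedness of $g(t)=F(t,f(t))$ is fine. However, the step you yourself single out as the only subtle one --- producing a \emph{single} $\tau$ that is simultaneously an admissible translation number for $f$ (with tolerance $\epsilon/2L$) and for $F$ (with tolerance $\epsilon/2$) --- is asserted rather than proved, and the justification you offer does not do the job. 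Taking $L(\epsilon)=\max\{L_f,L_F\}$ only guarantees that every interval of that length contains \emph{some} $\tau_1$ working for $f$ and \emph{some} $\tau_2$ working for $F$; two relatively dense sets need not intersect (think of $2{\mathbb Z}$ and $2{\mathbb Z}+1$), so this does not yield $\tau_1=\tau_2$. Moreover, the uniformity of the $F$-estimate over the bounded set $B$ is irrelevant to this point: it ensures that once $\tau$ is a translation number for $F$ the bound holds for all $x\in B$ at once, but it says nothing about whether a $\tau$ chosen for $f$ is also a translation number for $F$.

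For Bohr almost periodic functions the reconciliation is supplied by the classical common-translation-number theorem (via Bochner's criterion or the decomposition $f=g+\phi$ with $g\in AP$), and that is precisely what the proofs of \cite[Theorem 3.30, Theorem 3.31]{diagana} rest on; neither tool is available for $Q-AAP$ functions, which admit no such decomposition. Indeed, the paper's own Example \ref{primerusa} shows that $Q-AAP(I:X)$ is not closed under addition, which is the standard symptom of the failure of common translation numbers for this class. So your argument (and, to be fair, the paper's one-line citation) leaves the crucial step unjustified: to close the gap you would either need to prove a common-translation-number lemma for the pair $(f,F)$, or strengthen a hypothesis so that one of the two factors accepts \emph{every} $\tau$ --- for instance requiring $F(\cdot,x)$ to lie in the class $Q_{h}-AAP$ of Definition \ref{klase} uniformly on bounded sets, which is exactly the mechanism that makes Proposition \ref{raw} work for products.
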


\begin{thm}\label{drugi-comp}
Suppose that $F\in Q-AAP(I \times  X : Y)$ and $f\in Q-AAP(I : X).$ If the function $x\mapsto F(t,x),$ $t\in I$ is uniformly continuous on every bounded subset $B\subseteq X$ uniformly for $t\in I,$
then the function $t\mapsto F(t,f(t)),$ $t\in I$ belongs to the class $Q-AAP(I : Y).$
\end{thm}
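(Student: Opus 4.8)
The plan is to run the classical two-step (substitution, then shift) estimate used for compositions of almost periodic functions, adapting it to the quasi-asymptotic setting via the triangle inequality. First I would fix $\epsilon>0$ and record that, since $f\in Q-AAP(I:X)$ is in particular bounded and continuous, its range is contained in some bounded set $B\subseteq X$. The uniform continuity hypothesis on $x\mapsto F(t,x)$ (uniform in $t\in I$) then yields a number $\delta=\delta(\epsilon,B)\in(0,\epsilon)$ such that $x,\ y\in B$ with $\|x-y\|\leq \delta$ forces $\|F(t,x)-F(t,y)\|_{Y}\leq \epsilon$ for \emph{every} $t\in I$. This $\delta$ is the tolerance I will demand of the inner function $f$.

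Next I would split, for a shift $\tau$ and $t\in I$ with $t+\tau\in I$,
\begin{align*}
\bigl\|F(t+\tau,f(t+\tau))-F(t,f(t))\bigr\|_{Y} \leq & \bigl\|F(t+\tau,f(t+\tau))-F(t+\tau,f(t))\bigr\|_{Y}
\\ & +\bigl\|F(t+\tau,f(t))-F(t,f(t))\bigr\|_{Y}.
\end{align*}
The second summand is controlled directly by $F\in Q-AAP(I\times X:Y)$: if $\tau$ is an $\epsilon$-quasi-period of $F$ relative to the bounded set $B$, it is $\leq\epsilon$ once $|t|\geq M_{F}(\epsilon,\tau)$, because $f(t)\in B$. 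The first summand is controlled by the uniform continuity together with $f\in Q-AAP(I:X)$: if $\tau$ is a $\delta$-quasi-period of $f$, then $\|f(t+\tau)-f(t)\|\leq\delta$ for $|t|\geq M_{f}(\delta,\tau)$, and the choice of $\delta$ upgrades this to $\leq\epsilon$ (applying the uniform-continuity estimate with first argument $t+\tau\in I$). Putting $M(\epsilon,\tau):=\max\{M_{F}(\epsilon,\tau),M_{f}(\delta,\tau)\}$ gives the bound $\leq 2\epsilon$ for $|t|\geq M(\epsilon,\tau)$, and replacing $\epsilon$ by $\epsilon/2$ throughout yields exactly the estimate required in Definition \ref{prc-vag}; continuity and boundedness of $t\mapsto F(t,f(t))$ are immediate from continuity of $F$ and of $f$ together with $f(I)\subseteq B$.

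The hard part will be the step silently invoked above, namely producing, for each $\epsilon>0$, a \emph{relatively dense} set of shifts $\tau$ that serve \emph{simultaneously} as $\delta$-quasi-periods for $f$ and as $\epsilon$-quasi-periods for $F$ on $B$; the splitting is worthless unless the \emph{same} $\tau$ sits in both the inner and the outer argument. For genuine almost periodic functions this is the standard fact that the intersection of the two almost-period sets is again relatively dense, and the (S)-condition only hands me two separate relatively dense families. My plan to resolve it is to treat the pair $t\mapsto\bigl(f(t),F(t,\cdot)|_{B}\bigr)$ as a single quasi-asymptotically almost periodic object into a suitable product space and to extract common quasi-periods from it, following the composition arguments of \cite{diagana}. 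Verifying that this common quasi-period set is relatively dense in $I$ — not merely non-empty — is the delicate point, since relative density is not in general preserved under intersection, and it is precisely here that the structure of the (S)-condition (and the asymptotic, $\tau$-dependent nature of $M(\epsilon,\tau)$) must be exploited with care.
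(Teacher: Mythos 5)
Your two-step splitting is the right frame, and it is the same skeleton the paper has in mind: the paper gives no detailed argument for Theorem \ref{drugi-comp}, deferring instead to the proofs of \cite[Theorem 3.30, Theorem 3.31]{diagana} for asymptotically almost periodic functions, which proceed exactly by the substitution-then-shift estimate you write down. The problem is the step you yourself flag as ``the hard part'', and it is a genuine gap rather than a deferrable technicality: you must produce, for each $\epsilon>0$, a relatively dense set of $\tau$ that serve simultaneously as $\delta$-quasi-periods of $f$ and as $\epsilon$-quasi-periods of $F(\cdot,x)$ uniformly in $x\in B$. Your proposed fix --- regarding $t\mapsto \bigl(f(t),F(t,\cdot)|_{B}\bigr)$ as a single q-aap.\ object into a product space and reading off common quasi-periods --- begs the question: for Bohr almost periodic functions the pair $(f,g)$ is automatically almost periodic into $X\times Y$ because of Bochner's sequential criterion (equivalently, the classical theorem on common $\epsilon$-almost periods), but no Bochner-type characterization is available for the class $Q-AAP$, and the assertion that the pair is q-aap.\ is exactly equivalent to the existence of common quasi-periods, which is what you are trying to prove.

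Worse, the common-quasi-period property fails for general pairs of q-aap.\ functions: if every pair $f,g\in Q-AAP(I:X)$ admitted, for each $\epsilon>0$, a relatively dense set of common $\epsilon$-quasi-periods (one would then simply take the maximum of the two constants $M(\epsilon,\tau)$), the class $Q-AAP(I:X)$ would be closed under addition, and the paper's Example \ref{primerusa} shows it is not. So any correct proof of Theorem \ref{drugi-comp} must exploit structure specific to the pair $(f,F)$ rather than a generic intersection of two relatively dense sets. In Diagana's setting that structure is the decomposition $f=g+\phi$ with $g$ almost periodic and $\phi\in C_{0}$, which reduces the common-period problem to the almost periodic case where it is classical; no such decomposition exists for q-aap.\ functions, and your proposal does not supply a substitute. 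As written, your argument shows only that the composition behaves well along those $\tau$ that happen to lie in both quasi-period sets, with no guarantee that such $\tau$ form a relatively dense subset of $I$.
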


The class of $S^{p}$-qaap. functions depending on two parameters is introduced in the following definition:

\begin{defn}\label{two-parr}
Suppose that a function $F : I \times X \rightarrow Y$ satisfies that for each $x\in X$ the function $t\mapsto F(t,x),$ $t\in I$ is Stepanov $p$-bounded. Then we say that $F(\cdot,\cdot)$ is Stepanov $p$-quasi-asymptotically almost periodic,
uniformly on bounded subsets of $X,$ iff for each $\epsilon>0$ (S) holds with a number $M(\epsilon,\tau)>0$ such that for each bounded subset $B$ of $X$ we have:
\begin{align*}
\int^{t+1}_{t}\bigl\|F(s+\tau,x)-F(s,x)\bigr\|^{p}\, ds \leq \epsilon^{p}, & \mbox{ provided }t\in I, \ x\in B\mbox{ and } |t|\geq M(\epsilon,\tau).
\end{align*}
Denote by $S^{p}Q-AAP(I \times  X : Y)$ the set consisting of all Stepanov $p$-quasi-asymptotically almost periodic functions from $I \times X$ into $Y.$
\end{defn}

In \cite[Definition 3.1]{irkutsk-prim}, we have recently introduced the class $e-W_{ap,K}(I\times X,X)$ consisting of equi-Weyl-$p$-ap. functions, uniformly with respect to compact subsets of $X.$ The class $e-W_{ap,K}(I\times X,Y)$ with two different pivot spaces $X$ and $Y$ as well as the class $(e-)W_{ap,B}(I\times X,Y)$ consisting of all (equi-)Weyl-$p$-ap. functions, uniformly with respect to bounded subsets of $X,$ can be introduced in a similar way. Following the method proposed in the proof of Proposition \ref{weylova-klasa}, we can show then $S^{p}Q-AAP(I \times  X : Y)\subseteq W_{ap,B}(I\times X,Y).$

The following composition principles can be deduced in exactly the same way as it has been done in the proofs of \cite[Lemma 2.1, Theorem 2.2]{comp-adv}:

\begin{thm}\label{vcb-show}
Suppose that the following conditions hold:
\begin{itemize}
\item[(i)] $F \in S^{p}Q-AAP(I \times X : Y)  $ with  $p > 1, $ and there exist a number  $ r\geq \max (p, p/p -1)$ and a function $ L_{F}\in L_{S}^{r}(I) $ such that:
\begin{align*}
\|F(t,x)-F(t,y)\| \leq L_{F}(t)\|x-y\|_{Y},\quad t\in I,\ x,\ y\in Y;
\end{align*}
\item[(ii)] $x \in S^{p}Q-AAP (I: X),$ and there exists a set ${\mathrm E} \subseteq I$ with $m ({\mathrm E})= 0$ such that
$ K :=\{x(t) : t \in I \setminus {\mathrm E}\}$
is relatively compact in $X;$ here, $m(\cdot)$ denotes the Lebesgue measure.
\end{itemize}
Then $q:=pr/p+r \in [1, p)$ and $F(\cdot, x(\cdot)) \in S^{q}Q-AAP(I : Y).$
\end{thm}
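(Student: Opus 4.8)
The goal is to show that the composite $G(t):=F(t,x(t))$ lies in $S^{q}Q-AAP(I:Y)$, where $q=pr/(p+r)$. The natural strategy, following the standard composition argument for Stepanov classes, is to split the quantity $\|G(s+\tau)-G(s)\|$ into a part that uses the $S^p$-q-aap.\ property of $F$ in its first variable and a part that uses the Lipschitz estimate (i) together with the $S^p$-q-aap.\ property of $x(\cdot)$. Concretely, I would write
\begin{align*}
\bigl\|F(s+\tau,x(s+\tau))-F(s,x(s))\bigr\|
\leq \bigl\|F(s+\tau,x(s+\tau))-F(s,x(s+\tau))\bigr\|
+\bigl\|F(s,x(s+\tau))-F(s,x(s))\bigr\|,
\end{align*}
and then estimate the two terms separately. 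The first term will be controlled by the $S^p$-q-aap.\ property of $F$, uniformly over the bounded (indeed relatively compact) set $K$ furnished by hypothesis (ii). The second term is bounded by $L_F(s)\|x(s+\tau)-x(s)\|$ using (i).

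\textbf{Key steps.} First I would fix $\epsilon>0$ and invoke the $S^p$-q-aap.\ property of $F$ (uniformly on bounded subsets, hence on the bounded set $K$) together with the $S^p$-q-aap.\ property of $x(\cdot)$; since both assertions produce a relatively dense set of translation numbers via the shorthand (S), I would choose a common length $L(\epsilon)>0$ and a common translate $\tau$ that works for both $F$ and $x$, with an associated $M(\epsilon,\tau)>0$ beyond which both the inequality for $F$ and the inequality $\int_t^{t+1}\|x(s+\tau)-x(s)\|^p\,ds\le\epsilon^p$ hold for $|t|\ge M(\epsilon,\tau)$. The heart of the matter is then to estimate $\int_t^{t+1}\|G(s+\tau)-G(s)\|^q\,ds$ for such $t$. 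For the second term I would apply H\"older's inequality in the form that matches the exponent arithmetic: with $1/q=1/p+1/r$, one has
\begin{align*}
\int_t^{t+1}\bigl(L_F(s)\|x(s+\tau)-x(s)\|\bigr)^q\,ds
\leq \Biggl(\int_t^{t+1}L_F(s)^{r}\,ds\Biggr)^{q/r}
\Biggl(\int_t^{t+1}\|x(s+\tau)-x(s)\|^{p}\,ds\Biggr)^{q/p},
\end{align*}
which is $\le \|L_F\|_{S^r}^{q}\,\epsilon^{q}$ once $|t|\ge M(\epsilon,\tau)$. The first term requires more care: pointwise in $s$ one does \emph{not} automatically have smallness, only smallness on average, so I would exploit the uniformity of the $F$-estimate over $K$ and the relative compactness of $K$ to pass from the a.e.\ values $x(s+\tau)\in K$ to a uniform Stepanov bound; here a finite cover of $K$ by $\delta$-balls and the uniform-on-bounded-sets hypothesis let me replace $x(s+\tau)$ by finitely many fixed points and control the residual via the Lipschitz term again.

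\textbf{Main obstacle.} The delicate point is precisely the control of the first term $\int_t^{t+1}\|F(s+\tau,x(s+\tau))-F(s,x(s+\tau))\|^q\,ds$: the $S^p$-q-aap.\ property of $F$ gives a bound for each \emph{fixed} $x\in K$, but here the argument $x(s+\tau)$ varies with $s$, so I cannot directly pull it out of the integral. Overcoming this is where the relative compactness of $K$ in hypothesis (ii) is essential. The idea is to cover $K$ by finitely many balls $B(x_j,\delta)$, partition $[t,t+1]$ according to which ball $x(s+\tau)$ falls into, and on each piece replace $x(s+\tau)$ by the center $x_j$ at the cost of a term bounded by $2L_F(s)\delta$; the centers are fixed points of $K$, so the $S^p$-q-aap.\ estimate for $F$ applies to each, and choosing $\delta$ small relative to $\epsilon$ (and absorbing the $L_F$-factor via H\"older as above) closes the estimate. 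Finally I would combine the two parts, use the elementary inequality $(a+b)^q\le 2^{q-1}(a^q+b^q)$ to merge the splitting, and read off that $\int_t^{t+1}\|G(s+\tau)-G(s)\|^q\,ds$ is bounded by a constant multiple of $\epsilon^q$ for all $|t|\ge M(\epsilon,\tau)$, which (after the harmless rescaling $\epsilon\mapsto c\epsilon$) is exactly the defining property of $S^{q}Q-AAP(I:Y)$. The verification that $q\in[1,p)$ is immediate from $1/q=1/p+1/r$ and $r\ge p/(p-1)$.
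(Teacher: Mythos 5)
Your proposal follows exactly the route the paper itself takes: the paper offers no argument beyond the remark that the theorem ``can be deduced in exactly the same way as it has been done in the proofs of \cite[Lemma 2.1, Theorem 2.2]{comp-adv}'', and your sketch is a faithful reconstruction of that argument --- the two-term splitting, the H\"older step with $1/q=1/p+1/r$ for the Lipschitz part, and the finite $\delta$-cover of the relatively compact set $K$ to reduce the varying argument $x(s+\tau)$ to finitely many fixed centers. The exponent arithmetic and the covering argument are correct as you describe them, with the usual bookkeeping: fix $\delta$ (hence the number $n(\delta)$ of balls) first, then invoke the $S^{p}$-q-aap.\ property of $F$ at the $n(\delta)$ centers with $\epsilon/n(\delta)^{1/q}$ in place of $\epsilon$, so that the sum over the pieces of the partition still closes.

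The one step you assert without justification --- and the one place where ``exactly the same way as Long--Ding'' genuinely breaks down --- is the selection of a \emph{common} translation number $\tau$ serving simultaneously for $F$ (uniformly on $K$) and for $x(\cdot)$. In the Stepanov almost periodic setting of \cite{comp-adv} this is supplied by the classical fact that finitely many almost periodic functions admit a relatively dense set of common $\epsilon$-translation numbers (Bochner's criterion, or almost periodicity of the pair in a product space). No analogue of this is established for quasi-asymptotically almost periodic functions: the shorthand (S) only guarantees \emph{one} admissible $\tau$ in each interval of length $L(\epsilon)$, and Example \ref{primerusa} of the paper shows that $Q-AAP$ is not closed under addition, which is precisely the symptom of two q-aap.\ functions failing to share translation numbers. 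So either the common-$\tau$ step needs an explicit argument (which neither you nor the paper supplies), or the conclusion should be read as requiring that a single relatively dense family of $\tau$'s works for both $F$ and $x$. This gap is inherited from the paper's proof-by-citation, but in your write-up it is the step that would actually fail if pressed.
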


\begin{thm}\label{vcb-primex}
Suppose that the following conditions hold:
\begin{itemize}
\item[(i)] $F \in S^{p}Q-AAP(I \times X : Y)  $ with  $p \geq 1, $ $L>0$ and 
\begin{align*}
\| F(t,x)-F(t,y)\| \leq L\|x-y\|,\quad t\in I,\ x,\ y\in X.
\end{align*}
\item[(ii)] $x \in S^{p}Q-AAP(I:X),$ and there exists a set ${\mathrm E} \subseteq I$ with $m ({\mathrm E})= 0$ such that
$ K =\{x(t) : t \in I \setminus {\mathrm E}\}$
is relatively compact in $ Y.$
\end{itemize}
Then $F(\cdot, x(\cdot)) \in S^{q}Q-AAP(I : Y).$
\end{thm}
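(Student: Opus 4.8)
The plan is to follow the Long--Ding scheme indicated in the excerpt, reducing the composition to finitely many ``frozen-coordinate'' estimates by exploiting the relative compactness of the essential range $K$. First I would fix $\epsilon>0$ and use that $\overline{K}$ is compact to cover it by finitely many balls $B(x_i,\delta)$, $i=1,\dots,m$, where $\delta>0$ is to be calibrated against the Lipschitz constant $L$. Writing, for $s\in I$, $i(s)$ for an index with $x(s)\in B(x_{i(s)},\delta)$ (defined off the null set $\mathrm{E}$), I would use the triangle inequality and hypothesis (i) to split
\begin{align*}
\bigl\|F(s+\tau,x(s+\tau))-F(s,x(s))\bigr\|
&\leq L\bigl\|x(s+\tau)-x(s)\bigr\|
+\bigl\|F(s+\tau,x_{i(s)})-F(s,x_{i(s)})\bigr\| + 2L\delta .
\end{align*}
The first summand is the ``$x$-increment'', the middle one is a frozen time-increment of $F$ at the finitely many net points, and $2L\delta$ absorbs the error of the freezing.

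Next I would estimate the Stepanov integral $\int_t^{t+1}(\cdots)^q\,ds$ using $q\le p$. For the $x$-increment, $x\in S^{p}Q\text{-}AAP(I:X)$ controls $\int_t^{t+1}\|x(s+\tau)-x(s)\|^p\,ds$, and Hölder (or Jensen, since the interval has length one) passes this to the $L^q$-scale; because the Lipschitz bound here is the constant $L$ rather than an $L^r$-weight, no loss in the exponent is incurred and one lands in the $S^p$-metric, so the natural exponent is $q=p$. For the frozen term I would partition $[t,t+1]$ into the measurable pieces $\{s:i(s)=i\}$, apply $(a+b)^q\le 2^{q-1}(a^q+b^q)$, and bound each $\int_{t}^{t+1}\|F(s+\tau,x_i)-F(s,x_i)\|^p\,ds$ by the Stepanov $p$-q-aap-uniform-on-bounded property of $F$ from Definition \ref{two-parr} applied to the bounded set $\{x_1,\dots,x_m\}$; the factor $m=m(\delta)$ produced by summing over the net is then neutralised by choosing the tolerance in the $F$- and $x$-estimates of the form $\epsilon/m^{1/q}$ after $\delta$ (hence $m$) has been fixed.

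The main obstacle is the selection of a single shift $\tau$ that simultaneously serves as a good almost-period for $x(\cdot)$ and for the whole finite family $\{F(\cdot,x_i)\}_{i=1}^m$: the two foregoing estimates are only useful for a \emph{common} $\tau$ lying in a prescribed interval of length $L(\epsilon)$. For the family $\{F(\cdot,x_i)\}$ this is automatic, since Definition \ref{two-parr} furnishes one $L(\epsilon)$ and, per interval, one $\tau$ valid uniformly over the bounded net. The delicate point is to align these with the periods of $x$. Here, unlike the honest almost-periodic setting of \cite{comp-adv} where Bochner's criterion lets one extract a common subsequence of translates for free, the q-aap condition (S) only guarantees that each period-set is relatively dense, and a finite intersection of relatively dense sets need not be relatively dense. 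The step I expect to require the most care is therefore to pass to the product-space function $t\mapsto(x(t),F(t,x_1),\dots,F(t,x_m))\in X\times Y^m$ and to argue, using the finiteness of the net produced by the relative compactness in (ii) together with the shared structure inherited from (i), that this combined function is again $S^{p}$-q-aap, so that a relatively dense set of common shifts $\tau$ exists. Granting this, assembling the three pieces above with $\delta$, $m$ and the tolerances chosen in the indicated order yields $\int_t^{t+1}\|F(s+\tau,x(s+\tau))-F(s,x(s))\|^q\,ds\le\epsilon^q$ for $|t|\ge M(\epsilon,\tau)$, which is exactly $F(\cdot,x(\cdot))\in S^{q}Q\text{-}AAP(I:Y)$ with $q=p$.
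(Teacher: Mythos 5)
Your proposal reproduces the Long--Ding scheme that the paper itself invokes (the paper offers no proof beyond the citation of \cite[Lemma 2.1, Theorem 2.2]{comp-adv}), and the decomposition into the $x$-increment, the frozen term over a finite $\delta$-net of $\overline{K}$, and the $2L\delta$ freezing error is exactly the intended one. The genuine gap is the step you flag and then defer: producing, in every interval of length $L(\epsilon)$, a \emph{single} $\tau$ that is simultaneously an admissible shift for $x(\cdot)$ in the sense of Definition \ref{gorilaz} and for $F(\cdot,\cdot)$ in the sense of Definition \ref{two-parr}. In the Stepanov almost periodic setting this is supplied by Bochner's criterion (common $\epsilon$-periods of finitely many a.p.\ functions form a relatively dense set), which is what closes the Long--Ding argument. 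For quasi-asymptotically almost periodic functions no such common-shift principle exists, and your proposed remedy --- showing that $t\mapsto (x(t),F(t,x_{1}),\dots,F(t,x_{m}))$ is again $S^{p}$-q-aap --- is not a routine verification but is precisely the missing content: each coordinate has a relatively dense set of admissible shifts, but an intersection of relatively dense sets need not be relatively dense, and nothing in hypotheses (i)--(ii) couples the shift sets of $x$ and $F$.

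Moreover, this gap cannot be closed in the stated generality, as the paper's own examples show. Take $I=[0,\infty)$, $X=Y={\mathbb C}$, $F(t,y):=g(t)+y$ with $g(t)=\mbox{sign}(\sin t)$, and $x(\cdot):=f(\cdot)$ the function from Example \ref{prcko-qaz}. Then $F$ satisfies (i) with $L=1$; since $F(s+\tau,y)-F(s,y)=g(s+\tau)-g(s)$ is independent of $y$ and $g\in APS^{p}([0,\infty):{\mathbb C})$, the function $F$ belongs to $S^{p}Q-AAP([0,\infty)\times {\mathbb C}:{\mathbb C})$. Also $f\in SAP_{4}([0,\infty):{\mathbb C})\subseteq S^{p}Q-AAP([0,\infty):{\mathbb C})$ is continuous with range contained in $[0,1]$, so (ii) holds. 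Yet $F(\cdot,x(\cdot))=g+f\notin S^{p}Q-AAP([0,\infty):{\mathbb C})$ by Example \ref{primerusa}. Hence the composition principle requires an additional hypothesis forcing common shifts --- for instance $x\in S^{p}Q_{h}-AAP(I:X)$ or $F$ with shift sets containing every $\tau$ as in Definition \ref{klase}, or $x$ genuinely $S^{p}$-almost periodic --- and your argument can only be completed under such an assumption; the order-of-quantifiers bookkeeping ($\delta$, then $m$, then the tolerances) that you describe is otherwise fine.
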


\section{Invariance of quasi-asymptotical almost periodicity under the action of convolution products}\label{nedaju}

Concerning the invariance of quasi-asymptotical almost periodicity under the action of finite convolution product, we have the following result:

\begin{prop}\label{finajt}
Suppose that $(R(t))_{t>0}\subseteq L(X,Y)$ is a strongly continuous operator family and $\int^{\infty}_{0}\|R(s)\|\, ds<\infty.$ If $f\in Q-AAP([0,\infty) : X),$ then the function $F(\cdot)$, defined through \eqref{espebouvi},
is in class $f\in Q-AAP([0,\infty) : Y)$.
\end{prop}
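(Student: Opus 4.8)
The plan is to show directly from Definition \ref{prc-vag} that $F(\cdot)$ given by \eqref{espebouvi} is quasi-asymptotically almost periodic. First I would record that $F(\cdot)$ is bounded and continuous: boundedness follows from $\|F(t)\|_{Y}\leq \|f\|_{\infty}\int_{0}^{\infty}\|R(s)\|\,ds$, and continuity is standard for finite convolutions with an $L^{1}$-integrable, strongly continuous kernel. The substantive part is to verify the almost-periodicity estimate \eqref{dub garden} for $F$. Fix $\epsilon>0$. Using that $f\in Q-AAP([0,\infty):X)$, I would apply (S) to obtain $L(\epsilon)>0$ so that every interval of length $L(\epsilon)$ contains a $\tau$ with an associated $M(\epsilon,\tau)$ for which $\|f(t+\tau)-f(t)\|\leq \epsilon$ whenever $t\geq M(\epsilon,\tau)$. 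I would keep the \emph{same} $\tau$ for $F$ and produce a new threshold $M'(\epsilon,\tau)$.

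The central computation is to estimate $\|F(t+\tau)-F(t)\|_{Y}$ after a change of variables so that both integrals run over a common domain. Writing
\begin{align*}
F(t+\tau)-F(t)=\int_{0}^{t+\tau}R(t+\tau-s)f(s)\,ds-\int_{0}^{t}R(t-s)f(s)\,ds,
\end{align*}
I would substitute $s\mapsto s+\tau$ in the first integral to get $\int_{-\tau}^{t}R(t-s)f(s+\tau)\,ds$, so that
\begin{align*}
F(t+\tau)-F(t)=\int_{0}^{t}R(t-s)\bigl[f(s+\tau)-f(s)\bigr]\,ds+\int_{-\tau}^{0}R(t-s)f(s+\tau)\,ds.
\end{align*}
The second boundary term has $Y$-norm bounded by $\|f\|_{\infty}\int_{t}^{t+\tau}\|R(u)\|\,du$, which tends to $0$ as $t\to\infty$ by integrability of $\|R(\cdot)\|$; hence it is $\leq\epsilon$ once $t$ is large. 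For the main term I would split the $s$-integral at the point where $|f(s+\tau)-f(s)|$ becomes small: on $[M(\epsilon,\tau),t]$ the integrand's $f$-difference is $\leq\epsilon$, contributing at most $\epsilon\int_{0}^{\infty}\|R(u)\|\,du$; on $[0,M(\epsilon,\tau)]$ the $f$-difference is only bounded by $2\|f\|_{\infty}$, but there $t-s\geq t-M(\epsilon,\tau)$ is large, so $\int_{0}^{M(\epsilon,\tau)}\|R(t-s)\|\,ds\leq\int_{t-M(\epsilon,\tau)}^{\infty}\|R(u)\|\,du\to 0$ as $t\to\infty$. Choosing $M'(\epsilon,\tau)$ large enough forces both the boundary term and this tail to be $\leq\epsilon$, yielding $\|F(t+\tau)-F(t)\|_{Y}\leq \bigl(2+\int_{0}^{\infty}\|R(u)\|\,du\bigr)\epsilon$ for $t\geq M'(\epsilon,\tau)$.

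Since $\epsilon>0$ was arbitrary and the constant $2+\int_{0}^{\infty}\|R(u)\|\,du$ is fixed, a routine rescaling of $\epsilon$ at the outset gives the clean bound required by \eqref{dub garden}, and the same $L(\epsilon)$ inherited from $f$ serves as the length in (S) for $F$. This completes the verification that $F\in Q-AAP([0,\infty):Y)$. The step I expect to be the main obstacle is controlling the portion of the integral over the initial segment $[0,M(\epsilon,\tau)]$, where the $f$-difference cannot be made small: the argument must exploit that, for large $t$, the kernel values $R(t-s)$ on this segment lie in the far tail of $\|R(\cdot)\|$, so that integrability of $\|R(\cdot)\|$ over $[0,\infty)$ is exactly what rescues the estimate. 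The fact that $M(\epsilon,\tau)$ depends on $\tau$ is harmless here, because $\tau$ is fixed before $t$ is sent to infinity; this is the precise point where the weaker q-aap. condition (as opposed to the $\tau$-uniform aap. condition) still suffices.
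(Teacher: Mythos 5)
Your proposal is correct and follows essentially the same route as the paper's proof: after establishing boundedness and continuity, both arguments keep the same $\tau$ and $L(\epsilon)$ inherited from $f$, decompose $F(t+\tau)-F(t)$ into a main term plus a boundary term of length $\tau$, split the main term at the point where the argument of the $f$-difference crosses $M(\epsilon,\tau)$, and use $\int_{0}^{\infty}\|R(s)\|\,ds<\infty$ to kill both the boundary term and the tail of the kernel over the initial segment. The only difference is cosmetic (you keep the kernel as $R(t-s)f(s)$ while the paper writes $\int_{0}^{t}R(s)f(t-s)\,ds$), and your final constant $\bigl(2+\int_{0}^{\infty}\|R(u)\|\,du\bigr)\epsilon$ matches the paper's bound up to the same routine rescaling.
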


\begin{proof}
Without loss of generality, we may assume that $X=Y.$ It is clear that $\|F(t)\|=\|\int^{t}_{0}R(s)f(t-s)\, ds\|\leq \|f\|_{\infty}\int^{\infty}_{0}\|R(s)\|\, ds,$ $ t\geq 0$ so that $F(\cdot)$ is bounded. Since
$$
\|F(t)-F(t')\|\leq \int^{\infty}_{0}\|R(s)\| \| f(t-s)-f(t'-s)\|\, ds
$$
for any $t,\ t'\geq 0,$ the continuity of $F(t)$ for $t\geq 0$ follows from the boundedness of $f(\cdot)$ and the dominated convergence theorem.
Let $\epsilon>0$ be given. Then (S) holds with a number $M(\epsilon,\tau)>0$ such that
\eqref{dub garden} holds. On the other hand, the condition $\int^{\infty}_{0}\|R(s)\|\, ds<\infty$ implies $\lim_{t\rightarrow +\infty}\int^{\infty}_{t}\|R(s)\|\, ds =0$ so that there exists a finite number $M_{0}(\epsilon)>0$ such that
$\int^{\infty}_{t}\|R(s)\|\, ds<\epsilon$ for any $t\geq M_{0}(\epsilon).$
Let $t\geq M(\epsilon,\tau)+ M_{0}(\epsilon).$ Then we have
\begin{align*}
\|F(t+\tau)&-F(t)\|
\\ = & \Biggl\|\int^{t}_{0}R(s)[f(t+\tau-s)-f(t-s)]\, ds +\int^{t+\tau}_{t}R(s)f(t+\tau-s)\, ds\Biggr \|
\\ \leq & \int^{t}_{0}\|R(s)\| \|f(t+\tau-s)-f(t-s)\|\, ds+\|f\|_{\infty}\int^{t+\tau}_{t}\|R(s)\|\, ds
\\ \leq &  \int^{t-M(\epsilon,\tau)}_{0}\|R(s)\| \|f(t+\tau-s)-f(t-s)\|\, ds
\\ + & \int_{t-M(\epsilon,\tau)}^{t}\|R(s)\| \|f(t+\tau-s)-f(t-s)\|\, ds+\epsilon \|f\|_{\infty}
\\ \leq & \epsilon \int^{\infty}_{0}\|R(s)\|\,ds +2\|f\|_{\infty}\epsilon+\epsilon \|f\|_{\infty},\ t\geq 0,
\end{align*}
which completes the proof in a routine manner.
\end{proof}

The situation is quite similar for the infinite convolution product:

\begin{prop}\label{finajtt}
Suppose that $(R(t))_{t>0}\subseteq L(X,Y)$ is a strongly continuous operator family and $\int^{\infty}_{0}\|R(s)\|\, ds<\infty.$ If $f\in Q-AAP({\mathbb R} : X),$ then the function ${\bf F}(t),$ defined through \eqref{wer},
is in class $f\in Q-AAP({\mathbb R} : Y)$.
\end{prop}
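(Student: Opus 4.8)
The plan is to mirror the proof of Proposition \ref{finajt}, adapting the argument from the finite convolution product on $[0,\infty)$ to the infinite convolution product \eqref{wer} on ${\mathbb R}$. The overall structure is identical: first establish boundedness, then continuity, then verify the defining inequality of Definition \ref{prc-vag}. For boundedness, I would change variables $s\mapsto t-s$ in \eqref{wer} to write ${\bf F}(t)=\int^{\infty}_{0}R(s)f(t-s)\, ds$, whence $\|{\bf F}(t)\|\leq \|f\|_{\infty}\int^{\infty}_{0}\|R(s)\|\, ds<\infty$ for all $t\in {\mathbb R}$. Continuity follows exactly as before from $\|{\bf F}(t)-{\bf F}(t')\|\leq \int^{\infty}_{0}\|R(s)\|\,\|f(t-s)-f(t'-s)\|\, ds$, the boundedness of $f(\cdot)$, and the dominated convergence theorem.

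For the main estimate, fix $\epsilon>0$ and use that $f\in Q-AAP({\mathbb R}:X)$ so that (S) holds with a number $M(\epsilon,\tau)>0$ satisfying \eqref{dub garden}. As in Proposition \ref{finajt}, the tail condition $\int^{\infty}_{0}\|R(s)\|\, ds<\infty$ yields a finite $M_{0}(\epsilon)>0$ with $\int^{\infty}_{M_{0}(\epsilon)}\|R(s)\|\, ds<\epsilon$. After the substitution $s\mapsto t-s$, we have ${\bf F}(t+\tau)-{\bf F}(t)=\int^{\infty}_{0}R(s)[f(t+\tau-s)-f(t-s)]\, ds$, and the crucial simplification for the infinite product is that there is no boundary term $\int^{t+\tau}_{t}$ of the kind appearing in the finite case. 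I would split the integral at $s=M_{0}(\epsilon)$:
\begin{align*}
\bigl\|{\bf F}(t+\tau)-{\bf F}(t)\bigr\| \leq \int^{M_{0}(\epsilon)}_{0}\|R(s)\|\,\bigl\|f(t+\tau-s)-f(t-s)\bigr\|\, ds+2\|f\|_{\infty}\int^{\infty}_{M_{0}(\epsilon)}\|R(s)\|\, ds.
\end{align*}
The second term is at most $2\|f\|_{\infty}\epsilon$. For the first term, I would impose $|t|\geq M(\epsilon,\tau)+M_{0}(\epsilon)$; then for $0\leq s\leq M_{0}(\epsilon)$ the point $t-s$ satisfies $|t-s|\geq M(\epsilon,\tau)$, so \eqref{dub garden} gives $\|f(t+\tau-s)-f(t-s)\|\leq \epsilon$, bounding the first term by $\epsilon\int^{\infty}_{0}\|R(s)\|\, ds$. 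Altogether $\|{\bf F}(t+\tau)-{\bf F}(t)\|\leq \epsilon\bigl(\int^{\infty}_{0}\|R(s)\|\, ds+2\|f\|_{\infty}\bigr)$ whenever $|t|\geq M(\epsilon,\tau)+M_{0}(\epsilon)$, which is the required form with the new threshold $M(\epsilon,\tau)+M_{0}(\epsilon)$.

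The one genuine subtlety, and the point deserving the most care, is the shift from $[0,\infty)$ to ${\mathbb R}$ in handling the constraint $|t-s|\geq M(\epsilon,\tau)$: since $s$ ranges over $[0,M_{0}(\epsilon)]$ and $t$ can be large and negative, I must confirm that $|t|\geq M(\epsilon,\tau)+M_{0}(\epsilon)$ indeed forces $|t-s|\geq M(\epsilon,\tau)$ for all such $s$, whether $t$ is large positive or large negative. This holds because $|t-s|\geq |t|-s\geq |t|-M_{0}(\epsilon)\geq M(\epsilon,\tau)$, using $s\geq 0$. I expect no real obstacle beyond bookkeeping; indeed the infinite-product case is arguably cleaner than the finite one precisely because the boundary integral $\int^{t+\tau}_{t}R(s)f(t+\tau-s)\, ds$ is absent, so the factor $\epsilon\|f\|_{\infty}$ from that term in Proposition \ref{finajt} does not appear here.
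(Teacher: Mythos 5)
Your proposal is correct and follows essentially the same route as the paper: boundedness and continuity exactly as in Proposition \ref{finajt}, then the combination of the $\epsilon$-period property \eqref{dub garden} on the region where $|t-s|\geq M(\epsilon,\tau)$ with the smallness of the tail of $\int^{\infty}_{0}\|R(s)\|\,ds$ elsewhere. The only (harmless) difference is bookkeeping: you split the integral at $s=M_{0}(\epsilon)$ uniformly in the sign of $t$, whereas the paper treats $t\leq -M(\epsilon,\tau)-M_{0}(\epsilon)$ separately (where no split is needed) and, for large positive $t$, splits at $s=t\mp M(\epsilon,\tau)$; both yield the same bound of the form $\epsilon\bigl(C+2\|f\|_{\infty}\bigr)$ for $|t|\geq M(\epsilon,\tau)+M_{0}(\epsilon)$.
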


\begin{proof}
Without loss of generality, we may assume that $X=Y.$ The boundedness and continuity of ${\bf F}(\cdot)$ can be proved as in the former proposition. To prove that ${\bf F}(\cdot)$ satisfies the remaining requirement from definition of quasi-asymptotical almost periodicity,
fix a number $\epsilon>0.$ By definition, (S) holds with a number $M(\epsilon,\tau)>0$ satisfying
\eqref{dub garden}. On the other hand, the condition $\int^{\infty}_{0}\|R(s)\|\, ds<\infty$ implies $\lim_{t\rightarrow +\infty}\int^{\infty}_{t}\|R(s)\|\, ds =0$ so that there exists a finite number $M_{0}(\epsilon)>0$ such that
$\int^{\infty}_{t}\|R(s)\|\, ds<\epsilon$ for any $t\geq M_{0}(\epsilon).$
Let $|t|\geq M(\epsilon,\tau)+ M_{0}(\epsilon).$ If $t\leq -M(\epsilon,\tau) -M_{0}(\epsilon),$ then $t\leq -M(\epsilon,\tau),$ $|t-s|\geq  M(\epsilon,\tau)$ for all $s\geq 0,$ and
\begin{align*}
\|{\bf F}(t+\tau)&-{\bf F}(t)\|\leq \int^{\infty}_{0}\|R(s)\| \|f(t+\tau-s)-f(t-s)\|\, ds\leq \epsilon \int^{\infty}_{0}\|R(s)\|\, ds.
\end{align*}
If $t\geq M(\epsilon,\tau) +M_{0}(\epsilon),$ then $t-M(\epsilon,\tau)\geq M_{0}(\epsilon)$ so that $\int^{\infty}_{t-M(\epsilon,\tau)}\|R(s)\|\, ds<\epsilon;$ furthermore, in this case we have:
\begin{align*}
\|{\bf F}(t+\tau)&-{\bf F}(t)\|\leq \int^{\infty}_{0}\|R(s)\| \|f(t+\tau-s)-f(t-s)\|\, ds
\\ \leq & \epsilon \Biggl( \int_{0}^{t-M(\epsilon,\tau)}+\int_{t+M(\epsilon,\tau)}^{\infty}\Biggr)\|R(s)\|\, ds  +2\|f\|_{\infty}\int^{t+M(\epsilon,\tau)}_{t-M(\epsilon,\tau)}\|R(s)\|\, ds
\\ \leq & 2\epsilon \int^{\infty}_{0}\|R(s)\|\, ds  +2\|f\|_{\infty}\int^{\infty}_{t-M(\epsilon,\tau)}\|R(s)\|\, ds
\\ \leq &  2\epsilon \int^{\infty}_{0}\|R(s)\|\, ds  +2\|f\|_{\infty}\epsilon,
\end{align*}
which completes the proof of proposition.
\end{proof}

Suppose that $1/p+1/q=1$ and $\sum^{\infty}_{k=0}\|R(\cdot)\|_{L^{q}[k,k+1]}<\infty.$ This condition implies
$\int^{\infty}_{0}\|R(s)\|\, ds<\infty$ and can be used for the examination of q-aap. properties of convolution products with Stepanov $S^{p}$-qaap. inhomogenities $f(\cdot).$ Keeping in mind the proofs of Proposition \ref{finajt} and Proposition \ref{finajtt}, as well as the proofs of Proposition 2.6.11 and Proposition 3.5.3 in \cite{nova-mono}, the following results can be deduced:

\begin{prop}\label{finajt-rosi}
Suppose that $1/p+1/q=1,$ $(R(t))_{t>0}\subseteq L(X,Y)$ is a strongly continuous operator family and $\sum^{\infty}_{k=0}\|R(\cdot)\|_{L^{q}[k,k+1]}<\infty.$ If $f\in S^{p}Q-AAP([0,\infty) : X),$ then the function $F(\cdot)$, defined by \eqref{espebouvi},
is in class $f\in Q-AAP([0,\infty) : Y)$.
\end{prop}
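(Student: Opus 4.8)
The plan is to establish the three required properties of $F(\cdot)$ from \eqref{espebouvi}---boundedness, continuity, and the quasi-asymptotic almost periodicity estimate---by mimicking the proofs of Proposition \ref{finajt} and Proposition \ref{finajtt}, but handling the Stepanov-type decay of the inhomogeneity $f(\cdot)$ through H\"older's inequality on each unit interval $[k,k+1]$. First I would note that the summability condition $\sum_{k=0}^{\infty}\|R(\cdot)\|_{L^{q}[k,k+1]}<\infty$ indeed forces $\int_{0}^{\infty}\|R(s)\|\,ds<\infty$, since on each $[k,k+1]$ we have $\int_{k}^{k+1}\|R(s)\|\,ds\leq \|R(\cdot)\|_{L^{q}[k,k+1]}$ by H\"older with exponents $q$ and $p$ (the interval has length one). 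Hence the tail $\int_{t}^{\infty}\|R(s)\|\,ds\to 0$ as $t\to+\infty$, which is the decay I will invoke repeatedly.

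Boundedness and continuity of $F(\cdot)$ follow exactly as before: the estimate $\|F(t)\|\leq \|f\|_{S^{p}}\sum_{k=0}^{\infty}\|R(\cdot)\|_{L^{q}[k,k+1]}$ comes from splitting the integral $\int_{0}^{t}R(s)f(t-s)\,ds$ over unit intervals and applying H\"older on each, and continuity follows from the dominated convergence theorem together with the $S^{p}$-boundedness of $f(\cdot)$. For the main estimate, let $\epsilon>0$ be given, take $\tau$ and $M(\epsilon,\tau)$ from (S) satisfying \eqref{primer}, and choose $M_{0}(\epsilon)$ so that $\int_{t}^{\infty}\|R(s)\|\,ds<\epsilon$ for $t\geq M_{0}(\epsilon)$. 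For $t\geq M(\epsilon,\tau)+M_{0}(\epsilon)$ I would write
\begin{align*}
\|F(t+\tau)-F(t)\|\leq \int_{0}^{t}\|R(s)\|\,\|f(t+\tau-s)-f(t-s)\|\,ds+\|f\|_{S^{p}}\int_{t}^{t+\tau}\|R(s)\|\,ds,
\end{align*}
split the first integral at $s=t-M(\epsilon,\tau)$, and on the ``good'' part $s\in[0,t-M(\epsilon,\tau)]$ apply H\"older on each unit subinterval so that the $L^{q}$-norms of $R$ pair against the $L^{p}$-norms $\bigl(\int \|f(t+\tau-s)-f(t-s)\|^{p}\bigr)^{1/p}$, each of which is $\leq\epsilon$ by \eqref{primer} once the argument satisfies $|t-s|\geq M(\epsilon,\tau)$. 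This yields a bound of the form $\epsilon\sum_{k}\|R(\cdot)\|_{L^{q}[k,k+1]}$ on the good part, while the ``bad'' part $s\in[t-M(\epsilon,\tau),t]$ and the tail $\int_{t}^{t+\tau}$ are each controlled by $2\|f\|_{S^{p}}$ times the small tail integral $\int_{t-M(\epsilon,\tau)}^{\infty}\|R(s)\|\,ds<\epsilon$.

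The step I expect to be the main obstacle is the bookkeeping in the H\"older-on-unit-intervals argument: one must align the partition of the domain of integration with the unit intervals on which \eqref{primer} is stated for $\hat{f}$, and verify that the shifted $L^{p}$-norms of the difference $f(t+\tau-\cdot)-f(t-\cdot)$ remain below $\epsilon$ precisely when the reflected argument has modulus at least $M(\epsilon,\tau)$. A minor care point is the change of variables $s\mapsto t-s$ that turns the convolution integral into one involving $f$ evaluated near infinity, so that the threshold $|t-s|\geq M(\epsilon,\tau)$ translates correctly; otherwise the estimate is entirely parallel to Proposition \ref{finajt}, and assembling the pieces gives $\|F(t+\tau)-F(t)\|\leq C\epsilon$ for a constant $C$ depending only on $\|f\|_{S^{p}}$ and $\sum_{k}\|R(\cdot)\|_{L^{q}[k,k+1]}$, completing the proof in the usual routine manner.
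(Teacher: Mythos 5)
Your overall strategy---H\"older on unit intervals pairing $\|R(\cdot)\|_{L^{q}[k,k+1]}$ against $\|f(t+\tau-\cdot)-f(t-\cdot)\|_{L^{p}[k,k+1]}$, with a split of the convolution integral into a part where \eqref{primer} applies and a residual part near $s=t$---is exactly the paper's; the paper merely indexes the split by $k\leq k_{0}(\epsilon)$ versus $k>k_{0}(\epsilon)$ (with $\sum_{k\geq k_{0}(\epsilon)}\|R(\cdot)\|_{L^{q}[k,k+1]}<\epsilon$ and $t\geq M(\epsilon,\tau)+k_{0}(\epsilon)+1$) rather than by $s\lessgtr t-M(\epsilon,\tau)$, which is the same decomposition up to relabeling. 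The boundedness and continuity arguments also match.

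The one step you should repair is the estimate of the ``bad'' part $s\in[t-M(\epsilon,\tau),t]$ and of the leftover term $\int_{t}^{t+\tau}R(s)f(t+\tau-s)\,ds$: you bound these by $2\|f\|_{S^{p}}$ times the \emph{integral} tail $\int_{t-M(\epsilon,\tau)}^{\infty}\|R(s)\|\,ds$, which implicitly uses a pointwise bound $\|f(t+\tau-s)-f(t-s)\|\leq 2\|f\|_{S^{p}}$. That is unavailable here, since $f$ is only Stepanov $p$-bounded and need not lie in $L^{\infty}$. You must apply H\"older on unit intervals on the bad part as well, obtaining $2\|f\|_{S^{p}}\sum_{k\geq \lfloor t-M(\epsilon,\tau)\rfloor}\|R(\cdot)\|_{L^{q}[k,k+1]}$, and then make this tail of the \emph{sum} small by taking $t\geq M(\epsilon,\tau)+k_{0}(\epsilon)+1$. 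This is precisely why the hypothesis is the summability $\sum_{k=0}^{\infty}\|R(\cdot)\|_{L^{q}[k,k+1]}<\infty$ rather than the weaker $\int_{0}^{\infty}\|R(s)\|\,ds<\infty$ used in Proposition \ref{finajt}; with the fix the argument closes as in the paper.
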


\begin{prop}\label{finajtt-rosi}
Suppose that $1/p+1/q=1,$ $(R(t))_{t>0}\subseteq L(X,Y)$ is a strongly continuous operator family and $\sum^{\infty}_{k=0}\|R(\cdot)\|_{L^{q}[k,k+1]}<\infty.$ If $f\in S^{p}Q-AAP({\mathbb R} : X),$ then the function ${\bf F}(\cdot)$, defined by \eqref{wer},
is in class $f\in Q-AAP({\mathbb R} : Y)$.
\end{prop}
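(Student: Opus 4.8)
The plan is to adapt the argument of Proposition \ref{finajtt} to the Stepanov setting, replacing the uniform bound $\|f\|_{\infty}$ by Hölder estimates on consecutive unit intervals, exactly as the passage from Proposition \ref{finajt} to Proposition \ref{finajt-rosi} suggests. After the substitution $s\mapsto t-s$ I would write $\mathbf{F}(t)=\int_{0}^{\infty}R(s)f(t-s)\,ds$ and decompose every integral over $[0,\infty)$ as $\sum_{k=0}^{\infty}\int_{k}^{k+1}$. On each unit interval Hölder's inequality (with exponents $q$ and $p$) gives
\[
\int_{k}^{k+1}\|R(s)\|\,\|g(t-s)\|\,ds\leq \|R(\cdot)\|_{L^{q}[k,k+1]}\Bigl(\int_{k}^{k+1}\|g(t-s)\|^{p}\,ds\Bigr)^{1/p},
\]
and after the change of variables $u=t-s$ the last factor is at most $\|g\|_{S^{p}}$. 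Taking $g=f$ and summing over $k$ yields $\|\mathbf{F}(t)\|\leq \|f\|_{S^{p}}\sum_{k=0}^{\infty}\|R(\cdot)\|_{L^{q}[k,k+1]}<\infty$, which gives boundedness; continuity follows from the same majorant together with the dominated convergence theorem, as in the previous propositions.

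For the quasi-asymptotic estimate, fix $\epsilon>0$ and invoke the $S^{p}$-q-aap. property of $f$: (S) holds with $M(\epsilon,\tau)>0$ such that $\int_{v}^{v+1}\|f(u+\tau)-f(u)\|^{p}\,du\leq\epsilon^{p}$ whenever $|v|\geq M(\epsilon,\tau)$. Since $\sum_{k}\|R(\cdot)\|_{L^{q}[k,k+1]}<\infty$, I would also fix $M_{0}(\epsilon)\in\mathbb{N}$ with $\sum_{k\geq M_{0}(\epsilon)}\|R(\cdot)\|_{L^{q}[k,k+1]}<\epsilon$. The set of admissible translations $\tau$ and the length $L(\epsilon)$ are inherited from the (S)-condition for $f$; only the threshold $M$ must be enlarged. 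Writing $\mathbf{F}(t+\tau)-\mathbf{F}(t)=\int_{0}^{\infty}R(s)\,[f(t+\tau-s)-f(t-s)]\,ds$ and applying the same unit-interval Hölder estimate with $g(\cdot)=f(\cdot+\tau)-f(\cdot)$, the term indexed by $k$ is controlled by $\|R(\cdot)\|_{L^{q}[k,k+1]}$ times $\bigl(\int_{t-k-1}^{t-k}\|f(u+\tau)-f(u)\|^{p}\,du\bigr)^{1/p}$.

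The heart of the argument is to classify the indices $k$ according to the location of the window $[t-k-1,t-k]$ relative to the bad strip $(-M(\epsilon,\tau),M(\epsilon,\tau))$. I call $k$ good if $[t-k-1,t-k]\subseteq\{|u|\geq M(\epsilon,\tau)\}$; for such $k$ the left endpoint $v=t-k-1$ satisfies $|v|\geq M(\epsilon,\tau)$, so the inner factor is $\leq\epsilon$ and these terms sum to at most $\epsilon\sum_{k}\|R(\cdot)\|_{L^{q}[k,k+1]}$. The remaining (bad) indices satisfy $t-k-1<M(\epsilon,\tau)$ and $t-k>-M(\epsilon,\tau)$, i.e. $k\in(t-M(\epsilon,\tau)-1,\,t+M(\epsilon,\tau))$; here I only use the crude Minkowski bound $\bigl(\int_{t-k-1}^{t-k}\|f(u+\tau)-f(u)\|^{p}\,du\bigr)^{1/p}\leq 2\|f\|_{S^{p}}$. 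Taking $t$ with $|t|\geq M(\epsilon,\tau)+M_{0}(\epsilon)+1$ forces every bad index to exceed $M_{0}(\epsilon)$, so the bad terms sum to at most $2\|f\|_{S^{p}}\sum_{k\geq M_{0}(\epsilon)}\|R(\cdot)\|_{L^{q}[k,k+1]}\leq 2\|f\|_{S^{p}}\,\epsilon$. The two contributions combine to give $\|\mathbf{F}(t+\tau)-\mathbf{F}(t)\|\leq C\epsilon$ with $C$ independent of $\epsilon$ and $\tau$, which is the required quasi-asymptotic almost periodicity of $\mathbf{F}$.

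I expect the only delicate point to be the bookkeeping around the bad strip when $t>0$: unlike the negative case, where $t\leq -M(\epsilon,\tau)-1$ makes every window good simultaneously, for large positive $t$ the window $[t-k-1,t-k]$ sweeps through $(-M(\epsilon,\tau),M(\epsilon,\tau))$, and one must verify that precisely the offending indices $k$ are large (bounded below by $M_{0}(\epsilon)$) so that the summability tail of $R$ absorbs the contribution where the q-aap. estimate is unavailable. The one or two windows straddling $u=\pm M(\epsilon,\tau)$ are harmless, since they already belong to the bad indices. This is the Stepanov analogue of the case split in the proof of Proposition \ref{finajtt}, with Hölder's inequality playing the role of the crude $\|f\|_{\infty}$ bound.
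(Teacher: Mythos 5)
Your proposal is correct and follows essentially the same route as the paper: Hölder's inequality on unit intervals to reduce everything to the weights $\|R(\cdot)\|_{L^{q}[k,k+1]}$, then a split of the indices $k$ according to whether the window $[t-k-1,t-k]$ meets the strip $|u|<M(\epsilon,\tau)$, with the tail summability of $R$ absorbing the offending (necessarily large) indices when $t$ is large positive and no offending indices occurring when $t$ is large negative. Your "good/bad index" bookkeeping is just a repackaging of the paper's three-sum decomposition over $k\le\lfloor t-M\rfloor$, the middle range, and $k\ge\lceil t+M\rceil$.
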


For the sake of completeness, we will include the proofs (the preassumption $X=Y$ can be made):\vspace{0.2cm}

\noindent {\it Proof of Proposition \ref{finajt-rosi}}. It is easy to see that
$
\|F(t)\|\leq \sum^{\infty}_{k=0}\|R(\cdot)\|_{L^{q}[k,k+1]}\|f\|_{S^{p}},$ $t\geq 0.
$
The
continuity of $F(t)$ for $t\geq 0$ can be proved as follows. Let $t,\ t'\geq 0$ and $|t-t'|\leq 1.$ Then we have
\begin{align*}
& \| F(t)-F(t')\| \leq \int^{t}_{0}\|R(s)\| \|f(t-s)-f(t'-s)\|\, ds+ \int^{t'}_{t}\|R(s)\| \| f(t'-s)\|\, ds
\\ \leq &\sum_{k=0}^{\lfloor t\rfloor}\|R(\cdot)\|_{L^{q}[k,k+1]}\|f(t-\cdot)-f(t'-\cdot)\|_{L^{p}[k,k+1]}+\|f\|_{S^{p}}\|R(\cdot)\|_{L^{q}[\min(t,t'),\max(t,t')]}.
\end{align*}
Since $f\in L^{p}_{loc}([0,\infty) : X),$ we have $\lim_{t'\rightarrow t}\|f(t-\cdot)-f(t'-\cdot)\|_{L^{p}[k,k+1]}=0$ for $k=0,\cdot \cdot \cdot, \lfloor t\rfloor.$ Clearly, $\lim_{t'\rightarrow t}\|R(\cdot)\|_{L^{q}[\min(t,t'),\max(t,t')]}=0$ so that the function $F(\cdot)$ is continuous at point $t.$ Let $\epsilon>0$ be given. Then (S) holds with a number $M(\epsilon,\tau)>0$ satisfying
\eqref{primer}. Furthermore, there exists $k_{0}(\epsilon) \in {\mathbb N}$ such that $\sum_{k=k_{0}(\epsilon)}^{\infty}\|R(\cdot)\|_{L^{q}[k,k+1]}<\epsilon.$ For any $t\geq M(\epsilon,\tau)+k_{0}(\epsilon)+1,$ we have
\begin{align*}
\| F(t+\tau)&-F(t)\| \leq \sum_{k=0}^{\lfloor t\rfloor}\|R(\cdot)\|_{L^{q}[k,k+1]}\|f(t+\tau-\cdot)-f(t-\cdot)\|_{L^{p}[k,k+1]}
\\ \leq & \epsilon\sum_{k=0}^{k_{0}(\epsilon)}\|R(\cdot)\|_{L^{q}[k,k+1]}+2^{p-1}\|f\|_{S^{p}}\sum_{k=k_{0}(\epsilon)}^{\infty}\|R(\cdot)\|_{L^{q}[k,k+1]}
\\ \leq &  \epsilon \sum_{k=0}^{\infty}\|R(\cdot)\|_{L^{q}[k,k+1]} +2\|f\|_{S^{p}}\epsilon,
\end{align*}
finishing the proof.
\vspace{0.2cm}

\noindent {\it Proof of Proposition \ref{finajtt-rosi}}. The boundedness and continuity of function ${\bf F}(\cdot)$ can be shown as in the proof of \cite[Proposition 3.5.3]{nova-mono}. Let $\epsilon>0$ be given. Then (S) holds with a number $M(\epsilon,\tau)>0$ satisfying
\eqref{primer}. As above, there exists $k_{0}(\epsilon) \in {\mathbb N}$ such that $\sum_{k=k_{0}(\epsilon)}^{\infty}\|R(\cdot)\|_{L^{q}[k,k+1]}<\epsilon.$ Let $t\in {\mathbb R}$ be such that $|t|\geq M(\epsilon,\tau)+k_{0}(\epsilon)+1.$ Then we have
\begin{align*}
\| F(t+\tau)&-F(t)\| \leq \sum_{k=0}^{\infty}\|R(\cdot)\|_{L^{q}[k,k+1]}\|f(\cdot+\tau)-f(\cdot)\|_{L^{p}[t-(k+1),t-k]}.
\end{align*}
If $t\leq -M(\epsilon,\tau),$ then $[t-(k+1),t-k]\subseteq (-\infty,-M(\epsilon,\tau)]$ for any $k\in {\mathbb N}_{0}$ and the above estimate immediately implies
$
\| F(t+\tau)-F(t)\| \leq \epsilon \sum_{k=0}^{\infty}\|R(\cdot)\|_{L^{q}[k,k+1]}.
$
If $t\geq M(\epsilon,\tau)+k_{0}(\epsilon)+1,$ then $\lfloor t-M(\epsilon,\tau)\rfloor \geq k_{0}(\epsilon)$ so that
\begin{align*}
\| F(t+\tau)&-F(t)\| \leq \sum_{k=0}^{\lfloor t-M(\epsilon,\tau)\rfloor}\|R(\cdot)\|_{L^{q}[k,k+1]}\|f(\cdot+\tau)-f(\cdot)\|_{L^{p}[t-(k+1),t-k]}
\\ + & \sum_{k=\lfloor t-M(\epsilon,\tau)\rfloor}^{\lceil t+M(\epsilon,\tau)\rceil}\|R(\cdot)\|_{L^{q}[k,k+1]}\|f(\cdot+\tau)-f(\cdot)\|_{L^{p}[t-(k+1),t-k]}
\\ + &\sum_{k=\lceil t+M(\epsilon,\tau)\rceil}^{\infty}\|R(\cdot)\|_{L^{q}[k,k+1]}\|f(\cdot+\tau)-f(\cdot)\|_{L^{p}[t-(k+1),t-k]}
\\ \leq & \epsilon \Biggl( \sum_{k=0}^{\lfloor t-M(\epsilon,\tau)\rfloor} +\sum_{k=\lceil t+M(\epsilon,\tau)\rceil}^{\infty}\Biggr)\|R(\cdot)\|_{L^{q}[k,k+1]}+2\|f\|_{S^{p}}\epsilon
\\ \leq & 2\epsilon \sum_{k=0}^{\infty}\|R(\cdot)\|_{L^{q}[k,k+1]}+2\|f\|_{S^{p}}\epsilon,
\end{align*}
finishing the proof.

\section{Applications to abstract nonautonomous differential equations of first order}\label{convol}

Throughout this section, it will be always assumed that
the operator family $A(\cdot)$
satisfies the condition (H1) and the evolution system
$U(\cdot, \cdot)$ generated by $A(\cdot)$ is hyperbolic, i.e., the condition (H2) holds true.

In our recent research studies \cite{aot-besic}-\cite{sarajevo}, the author has considered the existence and uniqueness of generalized almost periodic properties of mild solutions of \eqref{nije-da-nije}-\eqref{srq-finite} and their semilinear analogues. In the formulations and proofs of all structural results from \cite[Section 2]{aot-besic} and \cite[Section 3]{sarajevo},
the essential boundedness of forcing term $f(\cdot)$ has been required as well as certain additional conditions on the generalized almost periodicity of $f(\cdot).$
In contrast to this, in the formulation of the following result, we require the Stepanov $p$-boundedness of
function $f(\cdot)$ for some exponent $p\in [1,\infty);$
by ${\mathcal F}$ we denote a general function space consisted of continuous functions from $[0,\infty)$ into $X.$

\begin{thm}\label{jos-fajnat}
Let $I=[0,\infty),$ $1/p+1/q=1$ and $f\in S^{p}Q-AAP({\mathbb R} : X).$ If $x\in P(0)X \cap \overline{D(A(0))},$
the function $t\mapsto \int^{t}_{0}U(t,s)Q(s)f(s)\, ds,$ $t\geq 0$ is in class ${\mathcal F}$ and for each $\epsilon>0$ \emph{(S)} holds with a number $M(\epsilon,\tau)>0$ such that
\begin{align}\label{prc-nire}
\sum_{k=0}^{\infty}\bigl\| \Gamma(t+\tau,t+\tau-\cdot)-\Gamma(t,t-\cdot)\bigr\|_{L^{q}[k,k+1]} \leq
\epsilon,\ \mbox{ provided }t\geq M(\epsilon,\tau),
\end{align}
then there exists a unique mild solution $u(\cdot)$ of \eqref{srq-finite} and this solution is
in class $Q-AAP([0,\infty) : X)+{\mathcal F}.$
\end{thm}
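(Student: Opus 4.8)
The plan is to work directly with the variation-of-parameters formula \eqref{ndn-finite} for the mild solution, $u(t)=U(t,0)x+\int^{t}_{0}U(t,s)f(s)\,ds,$ and to decompose it via the hyperbolicity of the evolution system. Since $t\geq s$ allows $U(t,s)=U(t,s)P(s)+U(t,s)Q(s)$ with $U(t,s)P(s)=\Gamma(t,s),$ I would write $u=u_{1}+u_{2}+u_{3},$ where $u_{1}(t)=U(t,0)x,$ $u_{2}(t)=\int^{t}_{0}\Gamma(t,s)f(s)\,ds$ and $u_{3}(t)=\int^{t}_{0}U(t,s)Q(s)f(s)\,ds;$ the splitting is legitimate pointwise because each integrand is integrable on the finite interval $[0,t].$ The term $u_{3}$ lies in $\mathcal{F}$ by hypothesis. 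For $u_{1},$ the assumption $x\in P(0)X$ gives $u_{1}(t)=U(t,0)P(0)x,$ so (H2)(c) yields $\|u_{1}(t)\|\leq M'e^{-\omega t}\|x\|,$ whence $u_{1}\in C_{0}([0,\infty):X).$ Since a $C_{0}$-perturbation of a q-aap.\ function is again q-aap.\ (adding $u_{1}$ introduces only the two extra terms $\|u_{1}(t+\tau)\|$ and $\|u_{1}(t)\|,$ both of which vanish as $t\to+\infty$), the whole problem reduces to proving $u_{2}\in Q-AAP([0,\infty):X);$ here one must be careful not to invoke any vector-space structure of $Q-AAP,$ which fails by Example \ref{primerusa}.

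The boundedness and continuity of $u_{2},$ and hence the existence of a well-defined mild solution, I would establish exactly as in Proposition \ref{finajt-rosi}: using the Green's function bound \eqref{srq}, $\|\Gamma(t,s)\|\leq M'e^{-\omega(t-s)},$ decomposing $[0,t]$ into unit subintervals and applying H\"older's inequality with the conjugate exponents $p,q$ gives $\|u_{2}(t)\|\leq M'c_{q,\omega}\|f\|_{S^{p}}\sum_{k=0}^{\infty}e^{-\omega k}<\infty$ for a constant $c_{q,\omega}.$ Uniqueness is automatic, since a mild solution of the linear problem \eqref{srq-finite} is by definition the single function \eqref{ndn-finite}.

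The heart of the matter is the almost-periodicity estimate for $u_{2}.$ Given $\epsilon>0,$ I would fix a $\tau$ that is simultaneously an $\epsilon$-almost period for $f$ in the sense of \eqref{primer} and for $\Gamma$ in the sense of \eqref{prc-nire}. After the substitution $s\mapsto t-r,$ writing $u_{2}(t)=\int^{t}_{0}\Gamma(t,t-r)f(t-r)\,dr,$ the difference $u_{2}(t+\tau)-u_{2}(t)$ splits into a kernel-difference term $\int^{t}_{0}[\Gamma(t+\tau,t+\tau-r)-\Gamma(t,t-r)]f(t+\tau-r)\,dr,$ an inhomogeneity-difference term $\int^{t}_{0}\Gamma(t,t-r)[f(t+\tau-r)-f(t-r)]\,dr,$ and a boundary term $\int^{t+\tau}_{t}\Gamma(t+\tau,t+\tau-r)f(t+\tau-r)\,dr.$ The first is bounded by $\|f\|_{S^{p}}$ times the sum in \eqref{prc-nire} after the unit-interval H\"older decomposition; the second is handled by splitting the $k$-sum near $k\approx t-M(\epsilon,\tau),$ using \eqref{primer} on the low range and the decay $e^{-\omega k}$ against the Stepanov bound on the high range, whose tail vanishes as $t\to+\infty;$ the boundary term is $O(e^{-\omega t})$ by \eqref{srq}. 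Each piece is $O(\epsilon)$ for all sufficiently large $t,$ which is precisely the q-aap.\ condition on $[0,\infty).$

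The main obstacle is twofold. First, unlike in Proposition \ref{finajt-rosi}, the convolution kernel $\Gamma(t,t-r)$ depends on the base point $t$ in addition to the lag $r;$ this is exactly what hypothesis \eqref{prc-nire} is designed to absorb in the kernel-difference term, and checking that \eqref{prc-nire} meshes with the unit-interval H\"older splitting is the principal technical step. Second, and more delicate, the two relatively dense sets of almost periods — those of $f$ and those supplied by \eqref{prc-nire} for $\Gamma$ — are used simultaneously, so the argument needs a single $\tau$ that is an $\epsilon$-almost period for both. I would therefore make explicit that the two (S)-conditions are taken to hold on a \emph{common} relatively dense set of $\tau,$ rather than treating this as automatic, since the intersection of two relatively dense sets need not be relatively dense.
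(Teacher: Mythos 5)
Your proposal is correct and follows essentially the same route as the paper's proof: the same three-way decomposition $u(t)=U(t,0)x+\int_{0}^{t}\Gamma(t,s)f(s)\,ds+\int_{0}^{t}U(t,s)Q(s)f(s)\,ds$ with the first summand exponentially decaying and the third in ${\mathcal F}$ by hypothesis, the same unit-interval H\"older estimates for boundedness and continuity of the $\Gamma$-part, and the same splitting of $u_{2}(t+\tau)-u_{2}(t)$ into a kernel-difference term absorbed by \eqref{prc-nire}, an inhomogeneity-difference term handled as in Proposition \ref{finajt-rosi} via \eqref{primer}, and a boundary term controlled by $\lim_{t\rightarrow\infty}\sum_{k\geq\lfloor t\rfloor}\|e^{-\omega|\cdot|}\|_{L^{q}[k,k+1]}=0$. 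If anything you are more explicit than the published argument, whose displayed inequality omits the inhomogeneity-difference term $\int_{0}^{t}\|\Gamma(t,t-s)\|\,\|f(t+\tau-s)-f(t-s)\|\,ds$ (so that \eqref{primer} is invoked but never visibly used) and which, as you rightly flag, tacitly assumes that a single $\tau$ from a relatively dense set serves simultaneously as an almost period for $f$ and for $\Gamma$ in \eqref{prc-nire}.
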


\begin{proof}
Since $x\in P(0)X \cap \overline{D(A(0))},$ the mapping $t\mapsto U(t,0)x,$ $t> 0$ is continuous, exponentially decaying and satisfies $\lim_{t\rightarrow 0+}U(t,0)x=x$ (\cite{schnaubelt}). 
The continuity of function $u(t)=U(t,0)x+\int^{t}_{0}U(t,s)f(s)\, ds,$ $ t\geq 0,$ given by \eqref{ndn-finite},
can be deduced as in the proof of \cite[Theorem 2.1]{aot-besic}, since any of the considered terms in the corresponding part of proof of
above-mentioned result can be majorized in a similar way, by using the $S^{p}$-boundedness of function $f(\cdot)$ and the
H\" older inequality. Clearly, $u(t)=\int^{t}_{0}\Gamma(t,s)f(s)\, ds+\int^{t}_{0}U(t,s)Q(s)f(s)\, ds,$ $t\geq 0$
and, by our preassumption made, it suffices to show that the function $t\mapsto \int^{t}_{0}\Gamma(t,s)f(s)\, ds,$ $t\geq 0$
is in class $Q-AAP([0,\infty) : X).$ Applying the H\" older inequality, the estimate \eqref{srq} and the
$S^{p}$-boundedness of function $f(\cdot),$
we get that there exists a finite positive constant $M''>0$ such that
\begin{align*}
& \| u(t) \|\leq M'\|f\|_{S^{p}}\sum_{k=0}^{\infty}\Bigl\| e^{-\omega |\cdot|}\Bigr\|_{L^{q}[t-(k+1),t-k]}
\leq  M'\|f\|_{S^{p}}\sum_{k=0}^{\infty}\Bigl\| e^{-\omega |\cdot|}\Bigr\|_{L^{\infty}[t-(k+1),t-k]}
\\ \leq & M'\|f\|_{S^{p}}\sum_{k=0}^{\infty}\Bigl[e^{-\omega |t-k|}+e^{-\omega |t-k-1|}\Bigr]
\leq  M'\|f\|_{S^{p}}e^{\omega t}\sum_{k=0}^{\infty}\Bigl[e^{-\omega k}+e^{-\omega (k+1)}\Bigr]
\leq M''e^{\omega |t|},
\end{align*}
for any $ t\geq 0.$
Fix a number $\epsilon>0.$ By definition, (S) holds with a number $M(\epsilon,\tau)>0$ satisfying
\eqref{primer}. Keeping in mind the estimate \eqref{prc-nire}, the final conclusion follows from the computation
\begin{align*}
& \|u(t+\tau) -u(t)\|
\\ \leq & \int^{t}_{0}\|\Gamma(t+\tau,t+\tau-s)-\Gamma(t,t-s)\| \|f(t-s)\|\, ds
\\ + & \int^{t+\tau}_{t}\|\Gamma(t+\tau,t+\tau-s)\|
\| f(t+\tau-s)-f(t-s) \|\,ds
\\ \leq & \|f\|_{S^{p}}\sum_{k=0}^{\lfloor t\rfloor}\bigl\| \Gamma(t+\tau,t+\tau-\cdot)-\Gamma(t,t-\cdot)\bigr\|_{L^{q}[k,k+1]}
+2\|f\|_{S^{p}}\sum_{k=\lfloor t\rfloor}^{\lceil t+\tau \rceil}\Bigl\|e^{-\omega |\cdot|}\Bigr\|_{L^{q}[k,k+1]}
\\ \leq & \|f\|_{S^{p}}\sum_{k=0}^{\infty}\bigl\| \Gamma(t+\tau,t+\tau-\cdot)-\Gamma(t,t-\cdot)\bigr\|_{L^{q}[k,k+1]}
+2\|f\|_{S^{p}}\sum_{k=\lfloor t\rfloor}^{\infty}\Bigl\|e^{-\omega |\cdot|}\Bigr\|_{L^{q}[k,k+1]}
\end{align*}
and the obvious equality $\lim_{t\rightarrow \infty}\sum_{k=\lfloor t\rfloor}^{\infty}\|e^{-\omega |\cdot|}\|_{L^{q}[k,k+1]}=0.$
\end{proof}

\begin{rem}\label{essentiali}
It can be simply shown that \eqref{prc} implies
\begin{align}\label{prc-frc-nire}
\int^{+\infty}_{0}\bigl\| \Gamma(t+\tau,t+\tau-s)-\Gamma(t,t-s)\bigr\|\, ds \leq \epsilon,\ t\geq M(\epsilon,\tau).
\end{align}
If we assume that $f\in L^{\infty}([0,\infty) : X)$ in place of $f\in L^{p}_{S}([0,\infty) : X),$
then the validity of \eqref{prc-frc-nire} in place of \eqref{prc-nire} implies that
$u(\cdot)$ is q-aap..
\end{rem}

Concerning the abstract Cauchy problem \eqref{nije-da-nije}, we have the following result:

\begin{thm}\label{jos}
Let $I={\mathbb R},$ $1/p+1/q=1$ and $f\in S^{p}Q-AAP({\mathbb R} : X).$ If for each $\epsilon>0$ \emph{(S)} holds with a number $M(\epsilon,\tau)>0$ such that
\begin{align}\label{prc}
\sum_{k\in {\mathbb Z}}\bigl\| \Gamma(t+\tau,t+\tau-\cdot)-\Gamma(t,t-\cdot)\bigr\|_{L^{q}[k,k+1]} \leq \epsilon,\ \mbox{ provided }t\in {\mathbb R}\mbox{ and } |t|\geq M(\epsilon,\tau),
\end{align}
then there exists a unique mild solution $u(\cdot)$ of \eqref{nije-da-nije} and this solution is q-aap..
\end{thm}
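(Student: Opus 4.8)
The plan is to follow the scheme of Theorem \ref{jos-fajnat}, replacing the finite convolution by the infinite one and importing the splitting device from the proof of Proposition \ref{finajtt-rosi} to treat the bilateral integral. As usual we may assume $Y=X$. The candidate solution is the function $u(t)=\int_{-\infty}^{+\infty}\Gamma(t,s)f(s)\,ds$ from \eqref{do-koske}. First I would check that $u(\cdot)$ is well defined, bounded and continuous: by the exponential bound \eqref{srq}, H\"older's inequality with $1/p+1/q=1$, and the $S^p$-boundedness of $f$ one gets
\[
\|u(t)\| \le M'\int_{-\infty}^{+\infty}e^{-\omega|t-s|}\|f(s)\|\,ds \le M'\|f\|_{S^p}\sum_{k\in\mathbb{Z}}\bigl\|e^{-\omega|\cdot|}\bigr\|_{L^q[k,k+1]} < \infty
\]
uniformly in $t$, the last series being summable by the geometric decay of its terms; continuity follows exactly as in the proof of \cite[Proposition 3.5.3]{nova-mono}. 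That $u(\cdot)$ is the unique bounded continuous mild solution of \eqref{nije-da-nije} is part of the general theory recalled around \eqref{do-koske} (see \cite{schnaubelt}), so the only genuinely new task is to verify that $u(\cdot)$ is q-aap.

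The key algebraic step is a decomposition. After the substitution $s\mapsto t-s$, so that $\Gamma(t,s)f(s)$ becomes $\Gamma(t,t-s)f(t-s)$, adding and subtracting $\Gamma(t+\tau,t+\tau-s)f(t-s)$ gives
\begin{align*}
u(t+\tau)-u(t) &= \int_{-\infty}^{+\infty}\Gamma(t+\tau,t+\tau-s)\bigl[f(t+\tau-s)-f(t-s)\bigr]\,ds
\\ &\quad + \int_{-\infty}^{+\infty}\bigl[\Gamma(t+\tau,t+\tau-s)-\Gamma(t,t-s)\bigr]f(t-s)\,ds =: I_1(t)+I_2(t).
\end{align*}
Fixing $\epsilon>0$, I would invoke Definition \ref{gorilaz} to produce (S) with a number $M(\epsilon,\tau)>0$ satisfying \eqref{primer}, and assume in addition that $|t|\ge M(\epsilon,\tau)$. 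The term $I_2(t)$ is handled precisely as the first term in Theorem \ref{jos-fajnat}: splitting the integral over the unit intervals $[k,k+1]$, applying H\"older's inequality, bounding $\|f(t-\cdot)\|_{L^p[k,k+1]}\le\|f\|_{S^p}$ and using the hypothesis \eqref{prc} yields $\|I_2(t)\|\le\|f\|_{S^p}\,\epsilon$.

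The heart of the matter is $I_1(t)$. Using $\|\Gamma(t+\tau,t+\tau-s)\|\le M'e^{-\omega|s|}$ from \eqref{srq}, splitting over the unit intervals and applying H\"older's inequality gives
\[
\|I_1(t)\| \le M'\sum_{k\in\mathbb{Z}}\bigl\|e^{-\omega|\cdot|}\bigr\|_{L^q[k,k+1]}\Bigl(\int_{t-(k+1)}^{t-k}\|f(u+\tau)-f(u)\|^p\,du\Bigr)^{1/p}.
\]
Here I would separate the indices $k$ into the \emph{good} ones, for which $[t-(k+1),t-k]\subseteq\{\,|u|\ge M(\epsilon,\tau)\,\}$ and where \eqref{primer} bounds the $L^p$-factor by $\epsilon$, and the finitely many \emph{bad} ones, for which $[t-(k+1),t-k]$ meets $[-M(\epsilon,\tau),M(\epsilon,\tau)]$ and where the $L^p$-factor is only controlled by $2\|f\|_{S^p}$ via the $S^p$-boundedness of $f$. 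The good indices contribute at most $\epsilon\,M'\sum_{k\in\mathbb{Z}}\|e^{-\omega|\cdot|}\|_{L^q[k,k+1]}$. The decisive observation is that the bad indices satisfy $k\in[t-M(\epsilon,\tau)-1,\,t+M(\epsilon,\tau)]$, so that $|k|$ is comparable to $|t|$; since the weights $\|e^{-\omega|\cdot|}\|_{L^q[k,k+1]}$ form the tail of a convergent series decaying geometrically in $|k|$, the total bad contribution is of order $e^{-\omega(|t|-M(\epsilon,\tau)-1)}$ and tends to $0$ as $|t|\to\infty$. Hence there is $M'(\epsilon,\tau)\ge M(\epsilon,\tau)$ with $\|I_1(t)\|\le C\epsilon$ for $|t|\ge M'(\epsilon,\tau)$, and combining with the estimate for $I_2(t)$ gives $\|u(t+\tau)-u(t)\|\le C\epsilon$ for all such $t$, which is the defining property in Definition \ref{prc-vag}.

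The main obstacle I anticipate is precisely this last bookkeeping for $I_1(t)$: because \eqref{primer} controls the increments of $f$ only outside the bounded window $[-M(\epsilon,\tau),M(\epsilon,\tau)]$, one must verify --- now for \emph{both} signs of large $|t|$, the extra difficulty over the half-line case of Theorem \ref{jos-fajnat} --- that the exceptional indices near that window correspond to $|s|\approx|t|$, where the exponential weight $e^{-\omega|s|}$ is negligible. Once this is settled the argument closes routinely.
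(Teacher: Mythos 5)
Your proposal is correct and follows essentially the same route as the paper: the same splitting of $u(t+\tau)-u(t)$ into a term controlled by \eqref{prc} via H\"older on unit intervals and a term controlled by \eqref{srq} together with \eqref{primer}, with the finitely many ``bad'' unit intervals near $[-M(\epsilon,\tau),M(\epsilon,\tau)]$ killed by the exponential weight $e^{-\omega|s|}$ once $|t|$ is large. The paper merely organizes the latter estimate differently (splitting the integral at $s=t$ and deferring one half to the proof of Proposition \ref{finajtt-rosi}), so the argument is the same in substance.
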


\begin{proof}
As in the proof of Theorem \ref{jos-fajnat}, we can deduce that the function
$u(t)=\int^{+\infty}_{-\infty}\Gamma(t,s)f(s)\, ds,$ $ t\in {\mathbb R},$
defined by \eqref{do-koske}, is bounded. The continuity of $u(\cdot)$ can be shown following the lines of
proof of \cite[Theorem 2.1]{aot-besic}. Assume now that $\epsilon>0$ is a given number. Then (S) holds with a number $M(\epsilon,\tau)>0$ satisfying
\eqref{primer}.
It is clear that, for every $t\in {\mathbb R},$ we have:
\begin{align*}
\|u(t+\tau)& -u(t)\| \leq \int_{-\infty}^{t}e^{-\omega |s|}\|f(t+\tau-s)-f(t-s)\|\, ds
\\ + & \int^{\infty}_{t}e^{-\omega |s|}\|f(t+\tau-s)-f(t-s)\|\, ds
\\ + & \int^{\infty}_{-\infty}\|\Gamma (t+\tau,t+\tau-s)-\Gamma(t,t-s)\| \|f(t-s)\|\, ds
\\ \leq & \int_{-\infty}^{t}e^{-\omega |s|}\|f(t+\tau-s)-f(t-s)\|\, ds
\\ + & \int^{\infty}_{t}e^{-\omega |s|}\|f(t+\tau-s)-f(t-s)\|\, ds
\\ + &\|f\|_{S^{p}}\sum_{k\in {\mathbb Z}}\bigl\| \Gamma(t+\tau,t+\tau-\cdot)-\Gamma(t,t-\cdot)\bigr\|_{L^{q}[k,k+1]}.
\end{align*}
Keeping in mind \eqref{prc}, we get that:
\begin{align*}
\|u(t+\tau)& -u(t)\| \leq \int_{-\infty}^{t}e^{-\omega |s|}\|f(t+\tau-s)-f(t-s)\|\, ds
\\ + & \int^{\infty}_{t}e^{-\omega |s|}\|f(t+\tau-s)-f(t-s)\|\, ds +\|f\|_{S^{p}}\epsilon,
\end{align*}
provided $|t|\geq M(\epsilon,\tau).$ By the proof of Proposition \ref{finajtt-rosi}, it follows the existence of a finite real number $M_{1}(\epsilon,\tau)>0$ such that
$$
\int_{-\infty}^{t}e^{-\omega |s|}\|f(t+\tau-s)-f(t-s)\|\, ds<\epsilon,\mbox{ provided }|t|\geq M_{1}(\epsilon,\tau).
$$
On the other hand, there exists an integer $k_{0}(\epsilon)\in {\mathbb N}$ such that $e^{-\omega k}\leq \epsilon$ for all $k\geq k_{0}(\epsilon).$ Let $|t|\geq 2M(\epsilon,\tau)+1+k_{0}(\epsilon).$ For the second addend, we can use the following calculus involving the H\"older inequality, after dividing the interval of integration $(-\infty,0]$ into two subintervals $(-\infty,-M(\epsilon,\tau)]$ and $[-M(\epsilon,\tau),0]$:
\begin{align*}
&\int^{\infty}_{t} e^{-\omega |s|}\|f(t+\tau-s)-f(t-s)\|\, ds
\\ \leq & \epsilon \sum_{k=0}^{\infty}\Bigl\| e^{-\omega |\cdot|}\Bigr\|_{L^{q}[t+M(\epsilon,\tau)+k,t+M(\epsilon,\tau)+k+1]}+\int^{0}_{-M(\epsilon,\tau)}e^{-\omega |s|}\|f(t+\tau-s)-f(t-s)\|\, ds
\\ \leq & \epsilon e^{-\omega|t+M(\epsilon,\tau)|}\sum_{k=0}^{\infty}\Bigl[e^{-\omega k}+e^{-\omega (k+1)}\Bigr]+2\|f\|_{S^{p}}\sum_{k=0}^{\lfloor M(\epsilon,\tau) \rfloor}\Bigl\| e^{-\omega |\cdot|}\Bigr\|_{L^{q}[t+k,t+k+1]}
\\ \leq & \epsilon e^{-\omega k_{0}(\epsilon)}\sum_{k=0}^{\infty}\Bigl[e^{-\omega k}+e^{-\omega (k+1)}\Bigr]+2\|f\|_{S^{p}}\sum_{k=0}^{\lfloor M(\epsilon,\tau) \rfloor}\Bigl[ e^{-\omega|t-(k+1)|}+e^{-\omega |t-k|} \Bigr]
\\ \leq & \epsilon e^{-\omega k_{0}(\epsilon)}\sum_{k=0}^{\infty}\Bigl[e^{-\omega k}+e^{-\omega (k+1)}\Bigr]+2\|f\|_{S^{p}}(1+M(\epsilon,\tau))e^{-\omega(M(\epsilon,\tau)+k_{0}(\epsilon))}
\\ \leq & \epsilon e^{-\omega k_{0}(\epsilon)}\sum_{k=0}^{\infty}\Bigl[e^{-\omega k}+e^{-\omega (k+1)}\Bigr]+2\|f\|_{S^{p}}(1+\omega)e^{-\omega k_{0}(\epsilon)},
\end{align*}
which completes the proof.
\end{proof}

\begin{rem}\label{essential}
The condition \eqref{prc} implies
\begin{align}\label{prc-frc}
\int^{+\infty}_{-\infty}\bigl\| \Gamma(t+\tau,t+\tau-s)-\Gamma(t,t-s)\bigr\|\, ds \leq \epsilon,\ \mbox{ provided }t\in {\mathbb R}\mbox{ and } |t|\geq M(\epsilon,\tau).
\end{align}
If we assume that $f\in L^{\infty}({\mathbb R} : X)$ in place of $f\in L^{p}_{S}({\mathbb R} : X),$
then the validity of \eqref{prc-frc} in place of \eqref{prc} implies that
$u(\cdot)$ is q-aap..
\end{rem}

\begin{rem}\label{2008}
It is worth noting that Theorem \ref{jos-fajnat} and Remark \ref{essentiali}, as well as
Theorem \ref{jos} and Remark \ref{essential}, continue to hold in the case that the operator family $(A(t))_{t\in {\mathbb R}}$ generates an exponentially stable evolution family $(U(t,s))_{t\geq s}$ in the sense of \cite[Definition 3.1]{diagana-2008}; in this case, the condition (H1) need not be satisfied and the condition (H2) holds with $P(t)=I$ and $Q(t)=0,$ $t\in {\mathbb R};$ $\Gamma(t,s)\equiv U(t,s)$.
\end{rem}

\subsection{Semilinear Cauchy problems}\label{polulinearni}
In this subsection, we consider the existence and uniqueness of q-aap. solutions of the abstract Cauchy problems \eqref{petar-ghj} and \eqref{petar-ghj-finite}. 
We first state the following result about the abstract Cauchy problem \eqref{petar-ghj-finite}:

\begin{thm}\label{jos-fajnat-semilinear}
Let $I=[0,\infty),$ the evolution system $U(\cdot,\cdot)$ be
exponentially stable, 
let $x\in P(0)X \cap \overline{D(A(0))}$ and let $F\in Q-AAP([0,\infty) \times X: X).$ 
Suppose that
for each $\epsilon>0$ \emph{(S)} holds with a number $M(\epsilon,\tau)>0$ satisfying \eqref{prc-frc-nire}.
If there exists a finite number $L\in (0,\omega/M')$ such that
\eqref{razmisli} holds,
then there exists a unique mild solution $u(\cdot)$ of \eqref{petar-ghj-finite} belonging to the 
class $Q-AAP([0,\infty) : X).$
\end{thm}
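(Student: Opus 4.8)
The plan is to exhibit the mild solution as the unique fixed point of the integral operator suggested by Definition~\ref{leton}(ii), via a Banach fixed-point argument on the complete metric space of q-aap. functions, fed by the composition principle and the linear convolution result already proved. Since the evolution system is exponentially stable, $P(t)=I$, $Q(t)=0$ and $\Gamma(t,s)=U(t,s)$ for $t\geq s$, so a mild solution of \eqref{petar-ghj-finite} is exactly a fixed point of
$$
(\Upsilon u)(t):=U(t,0)x+\int^{t}_{0}U(t,s)F(s,u(s))\, ds,\quad t\geq 0.
$$
I would first note that $\bigl(Q-AAP([0,\infty):X),d\bigr)$, with $d(u,v):=\|u-v\|_{\infty}$, is a complete metric space: although it is not a vector space (Example~\ref{primerusa}), Theorem~\ref{krew}(iii) shows it is closed under uniform convergence, hence a closed subset of the Banach space $C_{b}([0,\infty):X)$ and therefore complete.

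Next I would prove that $\Upsilon$ maps $Q-AAP([0,\infty):X)$ into itself. For $u\in Q-AAP([0,\infty):X)$, the global Lipschitz hypothesis \eqref{razmisli} together with $F\in Q-AAP([0,\infty)\times X:X)$ lets me invoke the composition principle Theorem~\ref{prvi-comp}, giving $F(\cdot,u(\cdot))\in Q-AAP([0,\infty):X)$, which is in particular bounded. Because $Q(\cdot)\equiv 0$ and $\Gamma\equiv U$ on $\{t\geq s\}$, the integral term equals $\int^{t}_{0}\Gamma(t,s)F(s,u(s))\, ds$, and this is q-aap. by the $L^{\infty}$ form of Theorem~\ref{jos-fajnat} recorded in Remark~\ref{essentiali}, using the standing hypothesis \eqref{prc-frc-nire}. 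Finally $U(\cdot,0)x$ is continuous and exponentially decaying, hence lies in $C_{0}([0,\infty):X)$, and adding such a $C_{0}$-term leaves the defining estimate of quasi-asymptotical almost periodicity intact; thus $\Upsilon u\in Q-AAP([0,\infty):X)$.

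It then remains to check the contraction property. Using $\|U(t,s)\|=\|U(t,s)P(s)\|\leq M'e^{-\omega(t-s)}$ and \eqref{razmisli}, for $u,v\in Q-AAP([0,\infty):X)$ and every $t\geq 0$,
$$
\bigl\|(\Upsilon u)(t)-(\Upsilon v)(t)\bigr\|\leq \int^{t}_{0}M'e^{-\omega(t-s)}L\,\|u(s)-v(s)\|\, ds\leq \frac{M'L}{\omega}\,\|u-v\|_{\infty},
$$
so that $d(\Upsilon u,\Upsilon v)\leq (M'L/\omega)\,d(u,v)$; the hypothesis $L\in(0,\omega/M')$ forces $M'L/\omega<1$. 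The Banach fixed-point theorem then yields a unique $u\in Q-AAP([0,\infty):X)$ with $\Upsilon u=u$, which is precisely the desired unique q-aap. mild solution of \eqref{petar-ghj-finite}.

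The step I expect to be the crux is the self-map property: it is not a single estimate but the splicing of two previously established invariance results --- the nonlinear composition principle for $F(\cdot,u(\cdot))$ and the linear convolution invariance for the Green's-function integral --- together with the extra care, forced by the failure of $Q-AAP$ to be a vector space, that each operation appearing here (a bounded composition, the finite convolution, and a $C_{0}$-perturbation) is one that genuinely preserves quasi-asymptotical almost periodicity. Once this is in hand, completeness and the contraction estimate are routine.
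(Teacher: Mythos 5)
Your proposal is correct and follows essentially the same route as the paper's own proof: the same fixed-point operator on the complete metric space $Q-AAP([0,\infty):X)$ (complete by Theorem \ref{krew}(iii)), the self-map property obtained by combining Theorem \ref{prvi-comp} with Theorem \ref{jos-fajnat}/Remark \ref{essentiali} and absorbing the $C_{0}$-term $U(\cdot,0)x$, and the contraction estimate $M'L/\omega<1$ feeding the Banach fixed-point theorem. You merely write out explicitly the contraction computation that the paper leaves as ``a simple calculation.''
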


\begin{proof}
As before, the mapping $t\mapsto U(t,0)x,$ $t> 0$ is continuous, exponentially decaying and satisfies $\lim_{t\rightarrow 0+}U(t,0)x=x.$
Let ${\mathcal P} : Q-AAP([0,\infty) : X) \rightarrow Q-AAP([0,\infty) : X)$ be defined through
$$
{\mathcal Pf}(t):=U(t,0)x+\int^{t}_{0}U(t,s)F(s,f(s))\, ds,\ t\geq 0.
$$
We will first show that the mapping ${\mathcal P}$ is well defined.
Since $C_{0}([0,\infty) : X)+Q-AAP([0,\infty) : X)=Q-AAP([0,\infty) : X)$ and $Q-AAP([0,\infty) : X)$ is a complete metric space by Theorem \ref{krew}(iii), it suffices to show that the mapping $t\mapsto
\int^{t}_{0}U(t,s)F(s,f(s))\, ds,$ $t\geq 0$ is in class $Q-AAP([0,\infty) : X).$ Due to Theorem \ref{prvi-comp}, the function $F(\cdot,f(\cdot))$ is in class $Q-AAP([0,\infty) : X);$ since $Q(t)=0$ for all $t\in {\mathbb R},$ the prescribed assumption on the condition \eqref{prc-frc-nire} yields that Theorem \ref{jos-fajnat} (see also Remark \ref{essentiali}) can be applied,
showing that the mapping $t\mapsto \int^{t}_{0}U(t,s)F(s,f(s))\, ds,$ $t\geq 0$
is in class $Q-AAP([0,\infty) : X)$. Furthermore, the condition $L\in (0,\omega/M')$ implies after a simple calculation involving  \eqref{srq} and \eqref{razmisli} that ${\mathcal P}(\cdot)$ is a contraction, so that the final conclusion simply follows by applying the Banach contraction principle.
\end{proof}

We can similarly prove the following result on the abstract Cauchy problem \eqref{petar-ghj}:

\begin{thm}\label{jos-fajnat-semilinearr}
Let $I={\mathbb R},$ the evolution system $U(\cdot,\cdot)$ be
exponentially stable and $F\in Q-AAP({\mathbb R} \times X: X).$ 
Suppose that
for each $\epsilon>0$ \emph{(S)} holds with a number $M(\epsilon,\tau)>0$ satisfying \eqref{prc-frc}.
If there exists a finite number $L\in (0,\omega/2M')$ such that
\eqref{razmisli} holds,
then there exists a unique mild solution $u(\cdot)$ of \eqref{petar-ghj} belonging to the 
class $Q-AAP({\mathbb R} : X).$
\end{thm}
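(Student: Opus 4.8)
The plan is to copy the structure of the proof of Theorem \ref{jos-fajnat-semilinear}, with the finite convolution replaced by the infinite one. Because $U(\cdot,\cdot)$ is exponentially stable we have $P(t)=I$ and $Q(t)=0$ for all $t\in{\mathbb R}$, so $\Gamma(t,s)\equiv U(t,s)$ and the mild solutions of \eqref{petar-ghj} in the sense of Definition \ref{leton}(i) are exactly the fixed points of the map
\[
{\mathcal P}f(t):=\int^{+\infty}_{-\infty}\Gamma(t,s)F(s,f(s))\,ds,\quad t\in{\mathbb R},
\]
regarded as an operator on $Q-AAP({\mathbb R}:X)$. First I would show that ${\mathcal P}$ maps this space into itself. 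For $f\in Q-AAP({\mathbb R}:X)$ the Lipschitz hypothesis \eqref{razmisli} lets me invoke the composition principle Theorem \ref{prvi-comp}, which gives $F(\cdot,f(\cdot))\in Q-AAP({\mathbb R}:X)$; in particular this function is bounded continuous. Since $Q(t)=0$, the standing assumption that \emph{(S)} holds with \eqref{prc-frc} is precisely the hypothesis of Remark \ref{essential}, so applying that remark to the bounded inhomogeneity $F(\cdot,f(\cdot))$ shows ${\mathcal P}f\in Q-AAP({\mathbb R}:X)$, and ${\mathcal P}$ is well defined.

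Next I would verify that ${\mathcal P}$ is a contraction for the supremum metric. For $f,g\in Q-AAP({\mathbb R}:X)$, the estimate \eqref{srq} (valid for all $t,s\in{\mathbb R}$) together with \eqref{razmisli} gives
\[
\|{\mathcal P}f(t)-{\mathcal P}g(t)\|\leq LM'\|f-g\|_{\infty}\int^{+\infty}_{-\infty}e^{-\omega|t-s|}\,ds=\frac{2LM'}{\omega}\,\|f-g\|_{\infty},\quad t\in{\mathbb R}.
\]
The hypothesis $L\in(0,\omega/2M')$ makes the constant $2LM'/\omega$ strictly less than $1$. By Theorem \ref{krew}(iii) the space $Q-AAP({\mathbb R}:X)$ is closed in $C_{b}({\mathbb R}:X)$, hence a complete metric space under the supremum metric, so the Banach contraction principle furnishes a unique fixed point $u(\cdot)$, the unique mild solution of \eqref{petar-ghj} lying in $Q-AAP({\mathbb R}:X)$.

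The only point requiring genuine care is the size of the contraction constant, and it is also what pins down the smallness threshold. In the half-line Theorem \ref{jos-fajnat-semilinear} the kernel is integrated only over $0\leq s\leq t$, so $\int_{0}^{t}e^{-\omega(t-s)}\,ds\leq 1/\omega$ and the condition is $L<\omega/M'$; here one integrates the uniform bound \eqref{srq} over all of ${\mathbb R}$, so $\int_{-\infty}^{+\infty}e^{-\omega|t-s|}\,ds=2/\omega$ doubles the constant and the threshold sharpens to $L<\omega/2M'$. Everything else is a transparent adaptation of the proof of Theorem \ref{jos-fajnat-semilinear}.
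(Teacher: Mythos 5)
Your proposal is correct and follows exactly the route the paper intends: the paper gives no separate proof of Theorem \ref{jos-fajnat-semilinearr}, stating only that it is proved "similarly" to Theorem \ref{jos-fajnat-semilinear}, and your argument is precisely that adaptation (composition via Theorem \ref{prvi-comp}, invariance via Remark \ref{essential}, completeness from Theorem \ref{krew}(iii), and the Banach contraction principle). Your remark that the threshold sharpens to $L<\omega/2M'$ because $\int_{-\infty}^{+\infty}e^{-\omega|t-s|}\,ds=2/\omega$ correctly explains the constant in the statement.
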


As mentioned in the introductory part, the semilinear Cauchy problems with $S^{p}$-qaap. forcing term $F(\cdot,\cdot)$ cannot be so easily considered because the range of function $x(\cdot),$ appearing in the formulations of Theorem \ref{vcb-show} and Theorem \ref{vcb-primex}, need not be relatively compact.

We close the paper by providing an illustrative example.

\begin{example}\label{dirichlet-jazz}
Let $X:=L^{2}[0,\pi]$ and $\Delta$ denote the Dirichlet Laplacian in $X,$ acting with the domain $H^{2}[0,\pi] \cap H^{1}_{0}[0,\pi];$ then we know that $\Delta$ generates a strongly continuous semigroup $(T(t))_{t\geq 0}$ on $X$, satisfying the estimate $\|T(t)\| \leq e^{-t},$ $t\geq 0.$ Of concern is the following problem
\begin{align}\label{dortmund-jazz}
u_{t}(t,x)&=u_{xx}(t,x)+q(t,x)u(t,x)+f(t,x),\ t\geq 0,\ x\in [0,\pi];
\\\label{dortmund-jazzz} & u(0)=u(\pi)=0,\ u(0,x)=u_{0}(x)\in X,
\end{align}
where $q : {\mathbb R} \times [0,\pi] \rightarrow {\mathbb R}$ is a jointly continuous function satisfying that $q(t,x)\leq -\gamma_{0},$ $(t,x)\in {\mathbb R} \times [0,\pi],$ for some number $\gamma_{0}>0.$ Define
$$
A(t)\varphi:=\Delta \varphi +q(t,\cdot)\varphi,\ \varphi \in D(A(t)):=D(\Delta)=H^{2}[0,\pi] \cap H^{1}_{0}[0,\pi],\ t\in {\mathbb R}.
$$
Then $(A(t))_{t\in {\mathbb R}}$ generates an exponentially stable evolution family $(U(t,s))_{t\geq s}$ in the sense of \cite[Definition 3.1]{diagana-2008}, which is given by
$$
U(t,s)\varphi:=T(t-s)e^{\int^{t}_{s}q(r,\cdot)\, dr}\varphi,\quad t\geq s.
$$
It is clear that we can rewrite the initial value problem \eqref{dortmund-jazz}-\eqref{dortmund-jazzz} in the following form:
$$
u^{\prime}(t)=A(t)u(t)+f(t),\ t\geq 0; \ \ u(0)=u_{0}.
$$
Hence, Theorem \ref{jos-fajnat}, resp. Theorem \ref{jos} (Theorem \ref{jos-fajnat-semilinear}, resp. Theorem \ref{jos-fajnat-semilinearr}), can be applied provided that for each $\epsilon>0$
(S) holds with a number $M(\epsilon,\tau)>0$ such that
the following condition holds:
\begin{align}\label{zagrebin-jazz}
\sum_{k=0}^{\infty}\Biggl\|e^{-|\cdot|}\sup_{x\in [0,\pi]}\Bigl| e^{\int^{t+\tau}_{t+\tau-\cdot}q(r,x)\, dr}-
e^{\int^{t}_{t-\cdot}q(r,x)\, dr} \Bigr| \Biggr\|_{L^{q}[k,k+1]}<\epsilon,\ t\geq M(\epsilon, \tau),
\end{align}
resp.
\begin{align}\label{zagrebin-jazzz}
\sum_{k\in {\mathbb Z}}\Biggl\| e^{-|\cdot|}\sup_{x\in [0,\pi]}\Bigl| e^{\int^{t+\tau}_{t+\tau-\cdot}q(r,x)\, dr}-
e^{\int^{t}_{t-\cdot}q(r,x)\, dr}  \Bigr| \Biggr\|_{L^{q}[k,k+1]}<\epsilon,\ |t|\geq M(\epsilon, \tau).
\end{align}
The conditions \eqref{zagrebin-jazz} and \eqref{zagrebin-jazzz} hold for a wide class of functions $q(\cdot,\cdot)$ and we will prove here that this condition particularly holds for the function $q(t,x):=-\gamma_{0}-3t^{2}-f(x),$ $t\geq 0,$ $x\in [0,\pi],$ where $f : [0,\infty) \rightarrow  [0,\infty)$ is a continuous function (see also \cite[Example 3.1]{aot-besic}, where we have analyzed the same choice). In our concrete situation, we have
\begin{align*}
\sup_{x\in [0,\pi]}& \Bigl| e^{\int^{t+\tau}_{t+\tau-s}q(r,x)\, dr}-
e^{\int^{t}_{t-s}q(r,x)\, dr}  \Bigr|
\\ \leq & \mbox{Const.} \cdot e^{-s^{3}}\Bigl| e^{3s(t+\tau)(s-t-\tau)}-e^{3st(s-t)} \Bigr|,\ t,\ s,\ \tau\geq 0.  
\end{align*}
Using this estimate and the Lagrange mean value theorem, it readily follows that:
\begin{align*}
\Biggl\|& e^{-|\cdot|}\sup_{x\in [0,\pi]}\Bigl| e^{\int^{t+\tau}_{t+\tau-\cdot}q(r,x)\, dr}-
e^{\int^{t}_{t-\cdot}q(r,x)\, dr} \Bigr| \Biggr\|_{L^{\infty}[k,k+1]}
\\ \leq & \mbox{Const.} \cdot |\tau| \sup_{s\in [k,k+1]}\Bigl[ e^{3st(s-t)}+e^{3s(t+\tau)(s-t-\tau)} \Bigr] \cdot \Bigl[ 3(k+1)^{2}+6(k+1)(t+\tau)\Bigr]
\\ \leq & \mbox{Const.} \cdot |\tau| \Bigl[ e^{3kt(k-t)}+e^{3(k+1)t(k+1-t)}+e^{3k(t+\tau)(k-t-\tau)}+ e^{3(k+1)(t+\tau)(k+1-t-\tau)} \Bigr]
\\ \cdot & \Bigl[ 3(k+1)^{2}+6(k+1)(t+\tau)\Bigr],\ t,\ \tau\geq 0,\ k\in {\mathbb N}_{0}.
\end{align*}
Let $3/4<c<1.$ Then $3ts(s-ct)\leq 3s^{3}/4c$ for all $t,\ s\geq 0$ and therefore
\begin{align*}
\\ \leq & \mbox{Const.} \cdot |\tau| \Bigl[ e^{3kt(k-ct)}e^{-3ct^{2}}+e^{3(k+1)t(k+1-ct)}e^{-3ct^{2}}
\\ + & e^{3k(t+\tau)(k-c(t+\tau))}e^{-3c(t+\tau)^{2}}+ e^{3(k+1)(t+\tau)(k+1-c(t+\tau))}e^{-3c(t+\tau)^{2}} \Bigr]
\\ \cdot & \Bigl[ 3(k+1)^{2}+6(k+1)(t+\tau)\Bigr]
\\ \leq & \mbox{Const.} \cdot |\tau| \Bigl[ e^{3k^{3}/4c}e^{-3ct^{2}}+e^{3(k+1)^{3}/4c}e^{-3ct^{2}}
\\ + & e^{3k^{3}/4c}e^{-3c(t+\tau)^{2}}+ e^{3(k+1)^{3}/4c}e^{-3c(t+\tau)^{2}} \Bigr]
\\ \cdot & \Bigl[ 3(k+1)^{2}+6(k+1)(t+\tau)\Bigr]
\\ \leq & \mbox{Const.} \cdot |\tau| e^{3(k+1)^{3}/4c}e^{-3ct^{2}} \Bigl[ 3(k+1)^{2}+6(k+1)(t+\tau)\Bigr],\ t,\ \tau\geq 0.
\end{align*}
Since $3/4c<1,$ the series in \eqref{zagrebin-jazz} is convergent with $q=\infty$ and has a sum which does not exceed
$
\mbox{Const.} \cdot |\tau| e^{-3ct^{2}} (1+t+\tau),$ $ t,\ \tau\geq 0.
$
At the end, it suffices to observe that  for each $\epsilon>0$ and $\tau \geq 0$ 
there exists a finite number $M(\epsilon,\tau)>0$ such that $|\tau| e^{-3ct^{2}} (1+t+\tau)<\epsilon$ for any $t\geq M(\epsilon,\tau).$ This shows that Theorem \ref{jos-fajnat} can be applied with any exponent $p\in [1,\infty).$
\end{example}

\bibliographystyle{amsplain}

\end{document}